\theoremstyle{plain}% Theorem-like structures provided by amsthm.sty
\newtheorem{theorem}{Theorem}[section]
\newtheorem{lemma}[theorem]{Lemma}
\newtheorem{proposition}[theorem]{Proposition}
\theoremstyle{definition}
\newtheorem{definition}[theorem]{Definition}
\theoremstyle{remark}
\newtheorem{remark}{Remark}
\newcommand{\pvec}[1]{\vec{#1}\mkern2mu\vphantom{#1}}
\DeclareMathOperator{\Tr}{Tr}
\DeclareMathOperator{\deter}{det}
\DeclareMathOperator{\re}{Re}
\DeclareMathOperator{\im}{Im}
\begin{document}

\title{Embedded eigenvalues for perturbed periodic Jacobi operators using a geometric approach}

\author{
\name{E. Judge\textsuperscript{a}\thanks{CONTACT E. J. Author. Email: ej75@kent.ac.uk}, S. Naboko\textsuperscript{b}, I. Wood\textsuperscript{a}\thanks{CONTACT I. W. Author. Email: i.wood@kent.ac.uk}}
\affil{\textsuperscript{a}School of Mathematics, Statistics and Actuarial Science, Sibson Building, University of Kent, Canterbury, Kent, CT2 7FS, UK; \textsuperscript{b}Dept. Math. Physics, Institute of Physics, St. Petersburg State University, 1 Ulianovskaia, St. Petergoff,
St. Petersburg, 198504, Russia.}}

\maketitle

\begin{abstract}
We consider the problem of embedding eigenvalues into the essential spectrum of periodic Jacobi operators, using an oscillating, decreasing potential. To do this we employ a geometric method, previously used to embed eigenvalues into the essential spectrum of the discrete Schr\"{o}dinger operator. For periodic Jacobi operators we relax the rational dependence conditions on the values of the quasi-momenta from this previous work. We then explore conditions that permit not just the existence of infinitely many subordinate solutions to the formal spectral equation but also the embedding of infinitely many eigenvalues.
\end{abstract}

\begin{keywords}
Jacobi matrices; periodic operators; embedded eigenvalues; spectral theory; Wigner-von Neumann.
\end{keywords}

\section{Introduction.}

Embedded eigenvalues for Schr\"{o}dinger operators can be traced all the way back to von Neumann and Wigner~\cite{7}, where oscillating potentials via diffractive interference were used to produce the desired bound states. At the time these calculations were considered mere oddities with few physical applications, however, due to the Schr\"{o}dinger equation's key role in quantum mechanics, it has since been suggested that these bound states might be found in particular molecular and atomic systems \cite{8,9,10,11,12,13}. Indeed, physical evidence has actually been recorded to support these assertions in semi-conductor heterostructures~\cite{14}. For Jacobi matrices, which are the discrete analogue of these problems, the subject of eigenvalues has also been well explored (see, for example, \cite{gol1, gol2} for complex potentials and \cite{9a,9f,9j,9g,2,3,4} for real potentials).

Here we continue this exploration and extend the techniques in \cite{9g} from their original discrete Schr\"{o}dinger operator (DSO) setting to arbitrary period-$T$ Jacobi operators, that is operators, $J_T$, that are tri-diagonal and of the form
\begin{equation}\label{4.10}
J_T:=\left(\begin{array}{ccccccccccccc}
 b_1&a_1&\\
 a_1&b_2&a_2&\\
 &a_2&b_3&a_3&\\
 &&\ddots&\ddots&\ddots&\\
 &&&a_{T-1}&b_T&a_T\\
 &&&&a_T&b_1&a_1&\\
 &&&&&a_1&b_2&a_2&\\
 &&&&&&\ddots&\ddots&\ddots&\\
 &&&&&&&a_{T-1}&b_{T}&a_T\\
 &&&&&&&&a_T&b_1&a_1&\\
 &&&&&&&&&a_1&b_2&a_{2}&\\
 &&&&&&&&&&\ddots&\ddots&\ddots
 \end{array}\right),\end{equation} with $a_i,b_i\in\mathbb{R}, a_i>0$ for all $i$. Throughout, we deal exclusively with an oscillating, diagonal perturbation, $q_k$, which is slowly decreasing, i.e. a Wigner-von Neumann type potential. In \cite{21} we used the the Wigner-von Neumann technique to construct an explicit Wigner-von Neumann potential starting from an ansatz for the eigenvector. It allowed us to embed a single eigenvalue for an arbitrary periodic Jacobi operator. Similarly, in \cite{6} we used the Levinson-type asymptotic method \cite{3.16,7.6,janmosz} to construct an explicit potential, with various frequencies, to produce subordinate solutions for infinitely many spectral parameters of the formal spectral equation of a perturbed period-$T$ Jacobi operator. In our present paper we use a completely different approach to describe an explicit geometric procedure for the construction of the potential and present not just subordinate solutions but eigenvectors for (possibly) infinitely many eigenvalues of a perturbed period-$T$ Jacobi operator.

The structure of the paper is as follows. First, we expound the geometric approach to embed a single eigenvalue into the essential spectrum of a periodic Jacobi operator (Section~\ref{sec2}). The technique in \cite{9g} is only valid for the special case of the DSO and those $\lambda$ in the generalised interior of the essential spectrum rationally independent with $\pi$; however, the technique constructed in this paper works for an arbitrary period-$T$ Jacobi operator and for any element in the generalised interior of the essential spectrum with quasi-momentum not equal to $\frac{\pi}{2}$. Adapting the idea of \cite{9g} we then construct a method to  embed infinitely many eigenvalues, simultaneously, into the essential spectrum, providing their quasi-momenta are rationally independent with each other and $\pi$ (Section~\ref{sec1}). These ideas are then developed, further, to permit the embedding of infinitely many eigenvalues, simultaneously, with fewer constraints. As examples we consider the cases when one eigenvalue, $\lambda$, has quasi-momentum rationally dependent with $\pi$ (Section~\ref{sec3}); when two eigenvalues, $\lambda_1, \lambda_2$, have quasi-momenta rationally dependent with $\pi$ (Section~\ref{sec3}); and also arbitrarily (but finitely) many, $\lambda_i$, with quasi-momentum rationally dependent with $\pi$, providing certain co-prime conditions are satisfied by the denominators of the quasi-momenta (Section~\ref{sec5}). Some of our results on embedding multiple eigenvalues are new even for the case of the DSO.

\section{Preparatory case. Embedding a single eigenvalue.}\label{sec2}

Let $J_T$ be a periodic Jacobi operator and $B_i(\lambda)$ be the {\it transfer matrices} where $$B_i(\lambda):=\left(\begin{array}{cc} 0&1\\ -\frac{a_{i-1}}{a_i}&\frac{\lambda-b_i}{a_i}\end{array}\right), i\in\{1,\dots,T\}, a_0:=a_T, \lambda\in\mathbb{C}.$$ Then $M(\lambda):=B_T(\lambda)\dots B_1(\lambda)$ is the associated {\it monodromy matrix}. We define the {\it hyperbolic points} to be those $\lambda$ that produce a monodromy matrix with two real eigenvalues, $\mu_1,\mu_2$ where
$|\mu_1|>1$ and $|\mu_2|<1$. The {\it elliptic points} are those $\lambda$ that produce a monodromy matrix with two distinct complex eigenvalues, $\mu,\overline{\mu}$ of modulus one. The elliptic points are characterised by the condition that $\left|\Tr M(\lambda)\right|<2$. The {\it parabolic points} are those $\lambda$ that produce a monodromy matrix with one eigenvalue, equal to either $1$ or $-1$, with algebraic multiplicity $2$. For the relevance of this partition of the complex plane to the spectral theory of periodic Schr\"{o}dinger and Jacobi operators, see \cite{19,20}.  Furthermore, we define the {\it generalised interior of the essential spectrum}, $\sigma_{ell}(J_T)$, to be the set of elliptic points, i.e. $$\sigma_{ell}(J_T)=\{\lambda\in\mathbb{R}~|~\left|\Tr M(\lambda)\right|<2\}.$$ For $\lambda\in\sigma_{ell}(J_T)$, the eigenvalues $\mu_{\pm}$ of the monodromy matrix, $M(\lambda)$, are such that $\mu_{\pm}=e^{\pm i\theta(\lambda)}$, where the function $\theta(\lambda)$ is called the {\it quasi-momentum}, and has range $(0,\pi)$.

We state now that throughout this paper the potential $(q_k)$ is always real and converges to zero. Since it is sufficient for our construction, we assume $q_k=0$ unless $k\equiv 1 \mod T$, and $\widetilde{q}_i$ is defined to be the $i$-th non-zero entry of the sequence $(q_k)$. We introduce the perturbed monodromy matrix, $M_i(\lambda)$, such that
\begin{align*}
M_i(\lambda)&:=B_T(\lambda)B_{T-1}(\lambda)\dots B_1(\lambda-\widetilde{q}_i)\\
&=B_T(\lambda)\dots B_2(\lambda)\left(B_1(\lambda)-\frac{\widetilde{q}_i}{a_1}\left(\begin{array}{cc}
0&0\\
0&1\end{array}\right)\right)\\
&=M(\lambda)-\frac{\widetilde{q}_i}{a_1}A(\lambda),
\end{align*} where $A(\lambda):=M(\lambda)B_1^{-1}(\lambda)\left(\begin{array}{cc}
0&0\\
0&1\end{array}\right).$ For $\lambda\in\sigma_{ell}(J_T)$, we can write $$M_i(\lambda)=W(\lambda)\left[R(\theta(\lambda))-\frac{\widetilde{q}_i}{a_1}W^{-1}(\lambda)A(\lambda)W(\lambda)\right]W^{-1}(\lambda),$$ where $W(\lambda):=V(\lambda)U(\lambda),$ the columns of $V(\lambda)$ are composed of the eigenvectors for $\mu(\lambda)$ and $\overline{\mu(\lambda)}$ respectively, $U(\lambda)$ is some unitary matrix and $R(\theta)$ is a rotation matrix of angle $\theta(\lambda)$.

The following lemma will be needed in the technique:
\begin{lemma}\label{5.9}
The invertible matrix $W(\lambda)$ can always be chosen to be real.
\end{lemma}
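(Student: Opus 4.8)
The plan is to produce a real choice of $W(\lambda)$ by hand from the eigendecomposition of the monodromy matrix, and then to verify that this choice fits the prescribed factorisation $W(\lambda)=V(\lambda)U(\lambda)$ with $U(\lambda)$ unitary.

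Fix $\lambda\in\sigma_{ell}(J_T)$ and abbreviate $M=M(\lambda)$, $\theta=\theta(\lambda)$. First I would record the two elementary properties of $M$ that drive everything: it is a \emph{real} $2\times2$ matrix (all $a_i,b_i$ are real), and $\det M=1$, because $\det B_i(\lambda)=a_{i-1}/a_i$ and the product telescopes using $a_0=a_T$. Together with $|\Tr M|<2$ this forces the eigenvalues of $M$ to be the non-real conjugate pair $\mu=e^{i\theta}$, $\overline\mu=e^{-i\theta}$ with $\theta\in(0,\pi)$. Let $v\in\mathbb{C}^2$ be an eigenvector for $\mu$; conjugating $Mv=\mu v$ and using that $M$ has real entries shows $\overline v$ is an eigenvector for $\overline\mu$, so we may take the eigenvector matrix to be $V(\lambda)=\bigl(\,v\mid\overline v\,\bigr)$, whence $MV(\lambda)=V(\lambda)D$ with $D=\operatorname{diag}(e^{i\theta},e^{-i\theta})$.

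Next I would set
\[
U:=\frac{1}{\sqrt2}\begin{pmatrix}1&i\\ 1&-i\end{pmatrix},\qquad
W:=V(\lambda)\,U=\sqrt2\,\bigl(\,\re v\mid-\im v\,\bigr),
\]
where the second equality is just $v+\overline v=2\re v$ and $i(v-\overline v)=-2\im v$. The matrix $U$ is unitary (its columns are orthonormal), so $W$ has the required form $V(\lambda)U(\lambda)$, and it is \emph{real} by inspection. It is also invertible: since $\mu\ne\overline\mu$ the vectors $v,\overline v$ are $\mathbb{C}$-linearly independent, and a one-line column computation gives $\det\bigl(v\mid\overline v\bigr)=-2i\,\det\bigl(\re v\mid\im v\bigr)$, so $\det W\ne0$. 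Finally, $W^{-1}MW=U^{-1}\bigl(V(\lambda)^{-1}MV(\lambda)\bigr)U=U^{*}DU$, and a direct $2\times2$ multiplication yields $U^{*}DU=R(\theta)$ — equivalently, $(1,\mp i)^{\mathsf T}$ are the eigenvectors of the rotation $R(\theta)$ belonging to $e^{\pm i\theta}$. Hence $M=W R(\theta) W^{-1}$ with $W=V(\lambda)U(\lambda)$ real and invertible, which is the assertion; the perturbed identity for $M_i(\lambda)$ is then immediate on substituting $M_i(\lambda)=M(\lambda)-\tfrac{\widetilde q_i}{a_1}A(\lambda)$.

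In essence the lemma is the classical fact that a real $2\times2$ matrix with a conjugate pair of non-real eigenvalues is $GL(2,\mathbb{R})$-conjugate to the corresponding rotation, so there is no real obstacle. The only point needing a little care — and the step I would write out most carefully — is the bookkeeping that matches the real conjugator to the prescribed data: one must check that the single unitary $U$ above simultaneously diagonalises $R(\theta)$ \emph{and} makes $V(\lambda)U$ real, and that ordering the columns of $V(\lambda)$ with the eigenvector for $\mu=e^{i\theta}$ first produces $R(\theta)$ with the correct sign of the angle rather than $R(-\theta)$.
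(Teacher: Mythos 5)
Your proof is correct, but it takes a genuinely different route from the paper's. You construct the real conjugator explicitly: starting from an eigenvector $v$ of $M(\lambda)$ for $\mu=e^{i\theta}$, you take $V=(v\mid\overline v)$ and the specific unitary $U=\tfrac{1}{\sqrt2}\left(\begin{smallmatrix}1&i\\1&-i\end{smallmatrix}\right)$, obtaining $W=\sqrt2\,(\re v\mid -\im v)$, whose realness is visible and whose invertibility follows from the determinant identity $\det(v\mid\overline v)=-2i\det(\re v\mid\im v)$. The paper instead argues abstractly: given \emph{any} $W$ satisfying $MW=WR(\theta)$, both $\re W$ and $\im W$ (and every real combination $\re W+\beta\im W$) satisfy the same intertwining relation because $M$ and $R(\theta)$ are real, and the polynomial $f(\beta)=\det[\re W+\beta\im W]$ cannot vanish on all of $\mathbb{R}$ since it would then vanish identically, contradicting $f(i)=\det W\neq0$; hence some real $\beta$ gives a real invertible choice. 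Your version buys more: it exhibits the unitary $U$ and the eigenvector matrix $V$ concretely, verifies that the column ordering produces $R(\theta)$ rather than $R(-\theta)$, and confirms that the real $W$ genuinely has the advertised factorisation $W=V(\lambda)U(\lambda)$ — something the paper's argument does not literally deliver, since $\re W+\beta\im W$ need not be of the form $VU$ with $U$ unitary (though only the relation $MW=WR(\theta)$ is used downstream). The paper's version is shorter and avoids computing eigenvectors. Both are complete proofs of the stated lemma.
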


\begin{proof}
The matrix $W(\lambda)=V(\lambda)U(\lambda)$, where both $U(\lambda)$ and $V(\lambda)$ are invertible, is also invertible. However, the matrix $W(\lambda)$ may not always be real, but we do have a choice in what $U(\lambda),V(\lambda)$ we use to define it. All that is needed is that $W(\lambda)$ satisfies the relation \begin{equation}\label{5.7}M(\lambda)W(\lambda)=W(\lambda)R(\theta(\lambda)).\end{equation} Indeed, there are many possibilities for a real $W(\lambda)$; for instance, we could just take the $\re(W(\lambda))$ or $\im(W(\lambda))$ of any $W(\lambda)$ that satisfies \eqref{5.7}, however, this does not guarantee the invertibility condition. So, assume for contradiction that $\re(W(\lambda))+\beta \im(W(\lambda))$ is not invertible for any $\beta\in\mathbb{R}$. Then, define the function $$f(\beta):=\deter\left[\re(W(\lambda))+\beta \im(W(\lambda))\right],$$ and by our assumption $f(\beta)=0$  for all $\beta\in\mathbb{R}$. But the function $f$ is just a polynomial in $\beta$ and is therefore analytic in $\beta$, and can be extended analytically to the complex plane. Thus, it is zero everywhere on the plane. However, if we choose $\beta=i$ then we get the original expression for $W(\lambda)$, which is always invertible, and thus we have a contradiction. This means that there exists at least one $\beta\in\mathbb{R}$ such that the matrix $\re(V(\lambda)U(\lambda))+\beta \im(V(\lambda)U(\lambda))$ is both invertible and real. We choose this matrix to be $W(\lambda)$.~\qedhere
\end{proof}

Throughout this paper $\lambda\in\sigma_{ell}(J_T)$ and therefore $M(\lambda),B_j(\lambda)$ are real as $\lambda$ is real. Moreover, by Lemma~\ref{5.9} we may assume from now on that $W(\lambda)$ is real. Consequently, since all the matrices have real entries and $\widetilde{q}_i$ is real and decreasing, we have for $\vec{f}\in\mathbb{R}^2$ $$\|\left(R(\theta(\lambda))-\frac{\widetilde{q}_i}{a_1}W^{-1}(\lambda)A(\lambda)W(\lambda)\right)\vec{f}\|^2$$
\begin{align}
&=\bigg\langle \left(R^*(\theta(\lambda))-\frac{\widetilde{q}_i}{a_1}W^*(\lambda)A^*(\lambda)\left(W^{-1}(\lambda)\right)^*\right)\nonumber\\
&~~~~~~~~~~~~~~~~~~~~~~~~\times\left(R(\theta(\lambda))-\frac{\widetilde{q}_i}{a_1}W^{-1}(\lambda)A(\lambda)W(\lambda)\right)\vec{f},\vec{f}\bigg\rangle\nonumber\\
&=\bigg\langle\left (I-\frac{2\widetilde{q}_i}{a_1} \re\left(R^*(\theta(\lambda))W^{-1}(\lambda)A(\lambda)W(\lambda)\right)+{{O}}(\widetilde{q}_i^2)\right)\vec{f},\vec{f}\bigg\rangle\nonumber\\
&=\|\vec{f}\|^2+O(\widetilde{q}_i^2)\|\vec{f}\|^2-\frac{2\widetilde{q}_i}{a_1}\langle R(-\theta(\lambda))W^{-1}(\lambda)A(\lambda)W(\lambda)\vec{f},\vec{f}\rangle,\label{5.11}
\end{align} and where the second equality was obtained using that for an arbitrary real matrix $B$, $\re (B)=\frac{1}{2}(B+B^*)$. Additionally,  observing that the matrix $A(\lambda)$ is rank one yields:
\begin{align}
&\langle R(-\theta(\lambda))W^{-1}(\lambda)A(\lambda)W(\lambda)\vec{f},\vec{f}\rangle=\langle A(\lambda)W(\lambda)\vec{f}, (W^{-1}(\lambda))^*R(-\theta(\lambda))^*\vec{f}\rangle\nonumber\\
&=\langle M(\lambda)B_1^{-1}(\lambda)\left(\begin{array}{cc}
0&0\\
0&1\end{array}\right)W(\lambda)\vec{f},\left(W^{-1}(\lambda)\right)^*R^*(-\theta(\lambda))\vec{f}\rangle\nonumber\\
&=\langle M(\lambda)B_1^{-1}(\lambda)\langle W(\lambda)\vec{f},\vec{e}_2\rangle\vec{e}_2,(W^{-1}(\lambda))^*R^*(-\theta(\lambda))\vec{f}\rangle\nonumber\\
&=\langle W(\lambda)\vec{f},\vec{e}_2\rangle\langle M(\lambda)B_1^{-1}(\lambda)\vec{e}_2,(W^{-1}(\lambda))^*R^*(-\theta(\lambda))\vec{f}\rangle\nonumber\\
&=\langle \vec{f},W^*(\lambda)\vec{e}_2\rangle \langle R(-\theta(\lambda))W^{-1}(\lambda)M(\lambda)B_1^{-1}(\lambda)\vec{e}_2,\vec{f}\rangle\nonumber\\
&=\|\vec{f}\|^2\|W^*(\lambda)\vec{e}_2\|\left\|W^{-1}(\lambda)M(\lambda)B_1^{-1}(\lambda)\vec{e}_2\right\|\nonumber\\
&~~\times\cos(\vec{f},W^*(\lambda)\vec{e}_2)\cos(R(-\theta(\lambda))W^{-1}(\lambda)M(\lambda)B_1^{-1}(\lambda)\vec{e}_2,\vec{f}),\label{5.12}
\end{align}
where $(x,y)$ means the angle between $x$ and $y$, and $\vec{e}_1=(1,0)^T$, $\vec{e}_2=(0,1)^T$. Now by combining Equations~\eqref{5.11} and \eqref{5.12} we obtain
\begin{multline}\label{5.8}
\|\left(R(\theta(\lambda))-\frac{\widetilde{q}_i}{a_1}W^{-1}(\lambda) A(\lambda)W(\lambda)\right)\vec{f}\|^2\\=\|\vec{f}\|^2\bigg(1-2\frac{\widetilde{q}_i}{a_1}\|W^*(\lambda)\vec{e}_2\|\left\|W^{-1}(\lambda)M(\lambda)B_1^{-1}(\lambda)\vec{e}_2\right\|\\ \times\cos(\vec{f},W^*(\lambda)\vec{e}_2)\cos(R(-\theta(\lambda))W^{-1}(\lambda)M(\lambda)B_1^{-1}(\lambda)\vec{e}_2,\vec{f})+O(\widetilde{q}_i^2)\bigg).
\end{multline}

The next stage of the argument is to establish for what $\vec{f}$ we can reduce the magnitude of the expression on the righthand-side of Equation~\eqref{5.8}. In order to do this we must investigate the relationship between the vectors $R(-\theta(\lambda))W^{-1}(\lambda)M(\lambda)B_1^{-1}(\lambda)\vec{e}_2$ and $W^*(\lambda)\vec{e}_2$.

\begin{lemma}\label{5.26}
Let $\lambda\in\sigma_{ell}(J_T)$, where $J_T$ is an arbitrary period-$T$ Jacobi operator and $\theta(\lambda)$, abbreviated to $\theta$, is the quasi-momentum. Then the vectors $W^*(\lambda)\vec{e}_2$ and $R(-\theta)W^{-1}(\lambda)M(\lambda)B_1^{-1}(\lambda)\vec{e}_2$ are always orthogonal with respect to the standard complex inner product.
\end{lemma}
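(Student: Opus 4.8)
The plan is to exploit the defining relation \eqref{5.7} of $W(\lambda)$ to collapse almost the entire matrix product $R(-\theta)W^{-1}(\lambda)M(\lambda)B_1^{-1}(\lambda)\vec{e}_2$. Since $W(\lambda)$ is invertible, \eqref{5.7} gives $W^{-1}(\lambda)M(\lambda)=R(\theta)W^{-1}(\lambda)$, and because $R(\theta)$ is a rotation matrix we have $R(-\theta)=R(\theta)^{-1}$. Hence
\[
R(-\theta)W^{-1}(\lambda)M(\lambda)B_1^{-1}(\lambda)\vec{e}_2 = R(-\theta)R(\theta)W^{-1}(\lambda)B_1^{-1}(\lambda)\vec{e}_2 = W^{-1}(\lambda)B_1^{-1}(\lambda)\vec{e}_2 ,
\]
so the claimed orthogonality reduces to showing $\langle W^*(\lambda)\vec{e}_2,\, W^{-1}(\lambda)B_1^{-1}(\lambda)\vec{e}_2\rangle = 0$.

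Next I would move the adjoint $W^*(\lambda)$ to the other slot of the inner product: using $\langle W^*(\lambda)x,y\rangle = \langle x, W(\lambda)y\rangle$, the factors $W(\lambda)$ and $W^{-1}(\lambda)$ cancel, leaving
\[
\langle W^*(\lambda)\vec{e}_2, W^{-1}(\lambda)B_1^{-1}(\lambda)\vec{e}_2\rangle = \langle \vec{e}_2,\, W(\lambda)W^{-1}(\lambda)B_1^{-1}(\lambda)\vec{e}_2\rangle = \langle \vec{e}_2,\, B_1^{-1}(\lambda)\vec{e}_2\rangle .
\]
It is worth recording here that, by Lemma~\ref{5.9}, $W(\lambda)$ may be taken real, so $W^*(\lambda)=W^T(\lambda)$ and all vectors appearing are real; thus orthogonality with respect to the complex inner product coincides with orthogonality with respect to the real one.

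It then only remains to check the purely algebraic fact that $\vec{e}_2$ is orthogonal to $B_1^{-1}(\lambda)\vec{e}_2$. I would simply solve $B_1(\lambda)\vec{v}=\vec{e}_2$: the first row of $B_1(\lambda)$ forces the second component of $\vec{v}$ to vanish, so $B_1^{-1}(\lambda)\vec{e}_2$ is a scalar multiple of $\vec{e}_1$ (explicitly $-\tfrac{a_1}{a_T}\vec{e}_1$), and therefore $\langle \vec{e}_2, B_1^{-1}(\lambda)\vec{e}_2\rangle = 0$, which finishes the argument.

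There is no genuine obstacle here; the only points requiring care are bookkeeping ones — invoking the invertibility of $W(\lambda)$ together with $R(-\theta)=R(\theta)^{-1}$ to carry out the cancellation, and keeping the adjoint and inner-product conventions consistent when transferring $W^*(\lambda)$. The structural reason the statement holds is that the rank-one perturbation $A(\lambda)$ feeds in only along the $\vec{e}_2$ direction, while $B_1^{-1}(\lambda)$ sends $\vec{e}_2$ into the $\vec{e}_1$ direction, which is precisely what makes the surviving inner product $\langle \vec{e}_2, B_1^{-1}(\lambda)\vec{e}_2\rangle$ vanish.
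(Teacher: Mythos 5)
Your proof is correct and is essentially the paper's own argument: both use the relation $M(\lambda)=W(\lambda)R(\theta)W^{-1}(\lambda)$ to cancel the rotation and the conjugation (the paper writes this as $W(\lambda)R(-\theta)W^{-1}(\lambda)=M^{-1}(\lambda)$ after moving the adjoint, you cancel $R(-\theta)R(\theta)$ first — the same computation reordered), and both reduce the claim to $\langle\vec{e}_2,B_1^{-1}(\lambda)\vec{e}_2\rangle=\langle\vec{e}_2,-\tfrac{a_1}{a_T}\vec{e}_1\rangle=0$.
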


\begin{proof}
By observing that $$M^{-1}(\lambda)=(W(\lambda)R(\theta)W^{-1}(\lambda)))^{-1}=W(\lambda)R(-\theta)W^{-1}(\lambda),$$ the result clearly follows from the explicit calculation:
$$\langle W^*(\lambda)\vec{e}_2,R(-\theta)W^{-1}(\lambda)M(\lambda)B_1^{-1}(\lambda)\vec{e}_2\rangle$$
 \begin{align*}
 &=\langle \vec{e}_2,W(\lambda)R(-\theta)W^{-1}(\lambda)M(\lambda)B_1^{-1}(\lambda)\vec{e}_2\rangle\\
 &=\langle \vec{e}_2,M^{-1}(\lambda)M(\lambda)B_1^{-1}(\lambda)\vec{e}_2\rangle\\
 &=\langle \vec{e}_2,B_1^{-1}(\lambda)\vec{e}_2\rangle=\langle \vec{e}_2,-\frac{a_1}{a_T}\vec{e}_1\rangle= 0.~\qedhere
 \end{align*}
\end{proof}

To estimate the cosines appearing in \eqref{5.8} we use the following elementary lemma.

\begin{lemma}\label{5.6}
Consider $\vec{h}_0,\vec{h}_1\neq {\underline{0}}\in\mathbb{R}^2$. Then for arbitrarily small $0<\epsilon<\frac{\pi}{2}$ we have $$|\sin(\vec{h}_0,\vec{f})\sin(\vec{h}_1,\vec{f})|\geq \sin^2(\epsilon)$$ for $\vec{f}\in\mathbb{R}^2$ lying outside the four cones with central axis $\pm\vec{h}_0,\pm\vec{h}_1$ and opening angles less than or equal to $\epsilon.$
\end{lemma}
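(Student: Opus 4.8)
The plan is to strip the geometry away and reduce the statement to a one‑variable estimate on $\sin$. Writing $\angle(\vec{u},\vec{v})\in[0,\pi]$ for the unsigned angle between two nonzero vectors of $\mathbb{R}^2$ — this is exactly the quantity denoted $(\vec{u},\vec{v})$ in the statement — I note first that $\sin\angle(\vec{u},\vec{v})\ge 0$, so the absolute value on the left‑hand side is harmless and it suffices to bound $\sin\angle(\vec{f},\vec{h}_0)\,\sin\angle(\vec{f},\vec{h}_1)$ from below by $\sin^2\epsilon$. I would then treat the two factors independently and identically, so the whole proof comes down to the single assertion: if $\vec{f}$ lies outside the two cones around $\pm\vec{h}_j$, then $\sin\angle(\vec{f},\vec{h}_j)\ge\sin\epsilon$.

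The first step is to make the cone condition explicit. The cone with central axis $\vec{h}_j$ and opening angle $\epsilon$ consists of those $\vec{g}$ with $\angle(\vec{g},\vec{h}_j)\le\epsilon$, while the cone with axis $-\vec{h}_j$ consists of those $\vec{g}$ with $\angle(\vec{g},-\vec{h}_j)\le\epsilon$, i.e.\ $\angle(\vec{g},\vec{h}_j)\ge\pi-\epsilon$. Hence ``$\vec{f}$ lies outside all four cones'' is precisely the pair of constraints $\angle(\vec{f},\vec{h}_0)\in[\epsilon,\pi-\epsilon]$ and $\angle(\vec{f},\vec{h}_1)\in[\epsilon,\pi-\epsilon]$. (I read the phrase ``opening angles less than or equal to $\epsilon$'' as fixing the opening angle to be $\epsilon$: shrinking the cones would only shrink the excluded region, and equivalently, being outside \emph{every} such cone is the same as being outside the cone of opening angle $\epsilon$.) This translation is the only bookkeeping‑heavy point, and the thing to get right is that it is the cone about $-\vec{h}_j$, not the cone about $\vec{h}_j$, which produces the upper bound $\angle(\vec{f},\vec{h}_j)\le\pi-\epsilon$.

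The second step is the elementary one‑variable estimate. For $\alpha\in[\epsilon,\pi-\epsilon]$ with $0<\epsilon<\frac{\pi}{2}$, either $\alpha\in[\epsilon,\frac{\pi}{2}]$, where $\sin$ is increasing so $\sin\alpha\ge\sin\epsilon$, or $\alpha\in[\frac{\pi}{2},\pi-\epsilon]$, where $\sin$ is decreasing so $\sin\alpha\ge\sin(\pi-\epsilon)=\sin\epsilon$; in either case $\sin\alpha\ge\sin\epsilon>0$. (Alternatively one may quote that a nonnegative concave function on a subinterval of $[0,\pi]$ attains its minimum at an endpoint, together with $\sin\epsilon=\sin(\pi-\epsilon)$.) Applying this with $\alpha=\angle(\vec{f},\vec{h}_0)$ and with $\alpha=\angle(\vec{f},\vec{h}_1)$, and multiplying the two inequalities, gives $\sin\angle(\vec{f},\vec{h}_0)\,\sin\angle(\vec{f},\vec{h}_1)\ge\sin^2\epsilon$, which is the claim (and the $\vec{h}_0=\pm\vec{h}_1$ degenerate case is covered automatically). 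There is no real obstacle here — the lemma is genuinely elementary, and the only care needed is the angular translation in the first step.
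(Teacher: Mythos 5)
Your proof is correct and follows the same route as the paper's one-line argument: translate the cone condition into $\angle(\vec{f},\vec{h}_j)\in[\epsilon,\pi-\epsilon]$ and bound each sine factor below by $\sin\epsilon$ using monotonicity. You are in fact slightly more careful than the paper, which only cites monotonicity of $\sin$ on $(0,\tfrac{\pi}{2})$ and leaves the case $\angle(\vec{f},\vec{h}_j)>\tfrac{\pi}{2}$ (handled by the cones about $-\vec{h}_j$ and the identity $\sin(\pi-\epsilon)=\sin\epsilon$) implicit.
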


\begin{proof}
Using that $(\vec{h}_0,\vec{f})>\epsilon,(\vec{h}_1,\vec{f})>\epsilon$ and that $\sin(x)$ is a monotonic increasing function on the range $(0,\frac{\pi}{2})$ yields the result.~\qedhere
\end{proof}

Consequently, by defining $\vec{h}_0:=\vec{v}_0$ where $\vec{v}_0\bot W^*(\lambda)\vec{e}_2$, and $\vec{h}_1:=\vec{v}_1$ where
\newline $\vec{v}_1\bot R(-\theta)W^{-1}M(\lambda)B_1^{-1}(\lambda)\vec{e}_2$ then
\begin{align*}
|\cos(W^*(\lambda)\vec{e}_2,\vec{f})\cos(R(-\theta)W^{-1}(\lambda)M(\lambda)B_1^{-1}(\lambda)\vec{e}_2,\vec{f})|&=|\sin(\vec{h}_0,\vec{f})\sin(\vec{h}_1\vec{f})|\\
&\geq \sin^2(\epsilon)>0
\end{align*}
for all $\vec{f}$ lying outside the four cones with central axis $\pm\vec{h}_0,\pm\vec{h}_1$ and opening angles less than or equal to $\epsilon.$

Then, for $\vec{f}$ outside a collection of arbitrarily narrow cones around $\pm W^*(\lambda)\vec{e}_2,\pm R(-\theta)W^{-1}(\lambda)M(\lambda)B_1^{-1}(\lambda)\vec{e}_2$ of opening angle $\epsilon$ (see Figure \ref{1.8} for more details), we have for any vector $\vec{f}$ the estimate  \begin{equation}\label{1.9}\|\left(R(\theta)-\frac{\widetilde{q}_i}{a_1}W^{-1}(\lambda)A(\lambda)W(\lambda)\right)\vec{f}\|^2\leq\|\vec{f}\|^2\left(1-C(\lambda)|\widetilde{q}_i|+O\left(\widetilde{q}_i^2\right)\right)
  \end{equation}
for some $C(\lambda)>0$, providing that the sign of $\widetilde{q}_i$ is chosen appropriately. (Otherwise the corresponding components of the solution will not decrease in size, but increase.)

We will use the following definition throughout this section.

\begin{definition}
We define the set $S_\epsilon$  to be the four orthogonal cones about the vectors $\pm W^*(\lambda)\vec{e}_2$ and $\pm R(-\theta)W^{-1}(\lambda)M(\lambda)B_1^{-1}(\lambda)\vec{e}_2$ each with opening angle $\epsilon$ (see Figure~\ref{1.8} for more details).
\end{definition}

\begin{figure}
\centering
\begin{tikzpicture}
        \draw [thick, red , -] (0,-2) -- (0,4);
        \draw [dashed, red, -] (0,0) -- (4,1);
        \draw [dashed, red, -] (0,0) -- (4,-1);
        \draw [thick, black] (2,1/2) to [out=-45, in=45] (2,0)
        node [left,black] at (2,1/4) {$\epsilon$};
         \draw [dashed, red, -] (0,0) -- (4,-1);
        \draw [thick, black] (2,-1/2) to [out=45, in=-45] (2,0)
        node [left,black] at (2,-1/4) {$\epsilon$};
           \draw [ thick, red, -] (-4,0) -- (4,0);
        \draw [dashed, red, -] (0,0) -- (-1,4);
     \draw [dashed, red, -] (0,0) -- (1,4);
      \draw [thick, black] (-1/2,2) to [out=45, in=135] (0,2)
        node [below,black] at (-1/4,2) {$\epsilon$};
         \draw [thick, black] (1/2,2) to [out=135, in=45] (0,2)
        node [below,black] at (1/4,2) {$\epsilon$};
         \draw [thick, black] (-2,1/2) to [out=-135, in=135] (-2,0)
        node [right,black] at (-2,1/4) {$\epsilon$};
           \draw [thick, black] (-2,0) to [out=-135, in=135] (-2,-1/2)
        node [right,black] at (-2,-1/4) {$\epsilon$};
        \draw [dashed, red, -] (0,0) -- (-1/2,-2);
     \draw [dashed, red, -] (0,0) -- (1/2,-2);
        \draw [thick, black] (-1/2,-2) to [out=-45, in=-135] (0,-2)
        node [above,black] at (-1/4,-2) {$\epsilon$};
         \draw [thick, black] (1/2,-2) to [out=-135, in=-45] (0,-2)
        node [above,black] at (1/4,-2) {$\epsilon$};

      \draw [dashed, red, -] (0,0) -- (-4,1);
     \draw [dashed, red, -] (0,0) -- (-4,-1);

\end{tikzpicture}
\caption{The four solid lines represent the central axes of the four bad cones comprising $S_\epsilon$ whilst the dashed lines represent the width of the respective cones, each with opening angle $\epsilon$. }
\label{1.8}
\end{figure}
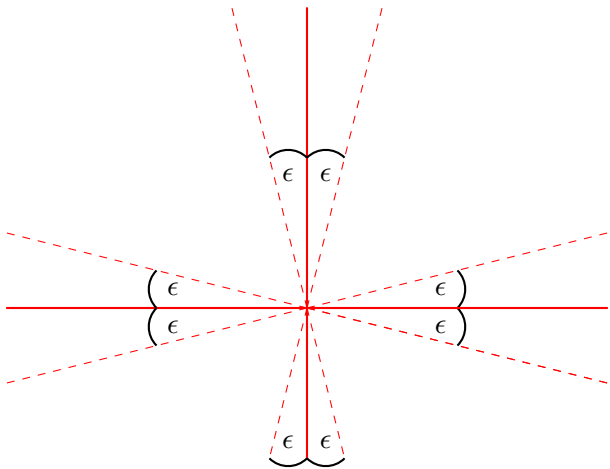

Thus, it has been established that for any $\lambda\in\sigma_{ell}(J_T)$ and for any $\vec{f}\in\mathbb{R}^2\setminus S_\epsilon$ we have the relations \begin{align}\left\|M_i(\lambda)\vec{f}~\right\|^2&=\left\|W(\lambda)\left(R(\theta)-\frac{\widetilde{q}_i}{a_1}W^{-1}(\lambda)A(\lambda)W(\lambda)\right)W^{-1}(\lambda)\right\|^2\nonumber\\
&\leq \|W(\lambda)\|^2\|W^{-1}(\lambda)\|^2\|\vec{f}\|^2\left(1- C(\lambda)|\widetilde{q}_i|+O\left(\widetilde{q}_i^2\right)\right)\label{5.65}\end{align} for some constant $C(\lambda)>0$, i.e. providing we ignore the contribution from $W(\lambda),W^{-1}(\lambda)$ (which will be dealt with later) then for a small enough potential the vector $\vec{f}$ shrinks. However it still remains to show that any arbitrary initial vector can be moved into the region $\mathbb{R}^2\setminus S_\epsilon$, to undergo shrinkage, regardless of where in the plane it is initially and the value of the quasi-momentum, $\theta(\lambda)$. Lemma~\ref{5.5} will establish this, however the following proposition is needed first.

\begin{proposition}\label{5.14}{\rm{\cite{5.3,5.2}}} Let $\vec{e}_1,\vec{e}_2,\dots,\vec{e}_N$ and $\pvec{e}_1',\pvec{e}_2',\dots,\pvec{e}_N'$ be two arbitrary collections of unit vectors in $\mathbb{R}^2$, and $\{\pi,\theta_1,\dots,\theta_N\}$ a collection of rationally independent numbers. If $R(\theta)$ is the operator of rotation through an angle $\theta$ around the origin of coordinates in $\mathbb{R}^2$, then for any $\gamma>0$ there is a number $m\in\mathbb{N}$ such that the angle between the vectors $\pvec{e}_i'$ and $R^m(\theta_i)\vec{e}_i$ is smaller than $\gamma$ simultaneously for all $i=1,2,\dots,N$. Here $m\leq R(N,\gamma)$, where the constant $R$ depends only on $N$ and $\gamma$.
  \end{proposition}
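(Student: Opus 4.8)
The plan is to translate the statement into a simultaneous density result for a single orbit on the $N$-dimensional torus, and then to upgrade density to the quantitative bound by exploiting the homogeneity of the orbit together with compactness. Represent each unit vector $\vec{e}_i$ by its polar angle $\phi_i\in\mathbb{R}/2\pi\mathbb{Z}$, and each $\pvec{e}_i'$ by its angle $\psi_i$. Since $R(\theta_i)$ acts on angles by translation by $\theta_i$, the iterate $R^m(\theta_i)\vec{e}_i$ has angle $\phi_i+m\theta_i\pmod{2\pi}$, and the angle between $R^m(\theta_i)\vec{e}_i$ and $\pvec{e}_i'$ equals the circle distance between $m\theta_i$ and the target $t_i:=\psi_i-\phi_i$. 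Writing $\mathbb{T}^N=(\mathbb{R}/2\pi\mathbb{Z})^N$ with the sup metric, the assertion becomes: the orbit $\{(m\theta_1,\dots,m\theta_N):m\in\mathbb{N}\}$ meets every $\gamma$-ball of $\mathbb{T}^N$ for some $m\le R(N,\gamma)$, because $(t_1,\dots,t_N)$ runs over all of $\mathbb{T}^N$ as the $2N$ unit vectors vary.

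First I would show this orbit is dense in $\mathbb{T}^N$. Let $H$ be the closure of $\{(m\theta_1,\dots,m\theta_N):m\in\mathbb{Z}\}$, a closed subgroup of the compact abelian group $\mathbb{T}^N$. If $H\ne\mathbb{T}^N$, Pontryagin duality yields a nontrivial character $x\mapsto e^{i\langle k,x\rangle}$ with $k\in\mathbb{Z}^N\setminus\{0\}$ that is trivial on $H$; evaluating at the generator gives $k_1\theta_1+\dots+k_N\theta_N\in 2\pi\mathbb{Z}$, i.e. $k_1\theta_1+\dots+k_N\theta_N-2\pi m=0$ for some $m\in\mathbb{Z}$. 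This is a nontrivial rational linear relation among $\pi,\theta_1,\dots,\theta_N$, contradicting their rational independence. Hence $H=\mathbb{T}^N$. Alternatively one may invoke the Weyl equidistribution theorem for $m\mapsto(m\theta_1,\dots,m\theta_N)/2\pi$, whose hypothesis is exactly that $1,\theta_1/2\pi,\dots,\theta_N/2\pi$ are $\mathbb{Q}$-linearly independent.

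Next I would promote density to the uniform estimate. Cover the compact space $\mathbb{T}^N$ by finitely many balls $B(x_j,\gamma/2)$, $j=1,\dots,L$, with $L=L(N,\gamma)$. By density, for each $j$ there is $m_j\in\mathbb{N}$ with $(m_j\theta_1,\dots,m_j\theta_N)$ within $\gamma/2$ of $x_j$; put $M:=\max_j m_j$. Then $\{(m\theta_1,\dots,m\theta_N):1\le m\le M\}$ is a $\gamma$-net of $\mathbb{T}^N$: any $y$ lies in some $B(x_j,\gamma/2)$ and is therefore within $\gamma$ of $(m_j\theta_1,\dots,m_j\theta_N)$. Taking $R(N,\gamma):=M$ gives the proposition. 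Crucially one integer $m$ must serve all $N$ coordinates at once, which is why the argument runs on the product torus rather than coordinatewise; and the homogeneity of the orbit — each window $\{(j+1)\theta,\dots,(j+M)\theta\}$ is a translate of $\{\theta,\dots,M\theta\}$ — is what makes the bound uniform over all choices of $\vec{e}_i$ and $\pvec{e}_i'$.

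The main obstacle is the quantitative content. Density is soft, but the size of $M$ genuinely depends on the arithmetic of $\theta_1,\dots,\theta_N$ — on how closely they can be approximated by solutions of small integer relations, equivalently on the rate of convergence in Weyl's theorem — so $R$ cannot depend on $N$ and $\gamma$ alone if the $\theta_i$ are allowed to vary freely. Following the cited works I would fix $\pi,\theta_1,\dots,\theta_N$ once and for all, so that the phrase "$R$ depends only on $N$ and $\gamma$" is read relative to this fixed data; the substantive point is the uniformity over the $2N$ unit vectors, which the compactness argument above supplies.
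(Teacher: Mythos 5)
The paper does not prove this proposition: it is quoted verbatim from the cited references (Kornfel'd--Sinai--Fomin and Naboko), so there is no in-paper argument to compare against. Your proof is correct and is essentially the standard argument behind the cited result: reduce to showing that the finite orbit segment $\{(m\theta_1,\dots,m\theta_N)\}_{m\le M}$ is a $\gamma$-net in the torus $(\mathbb{R}/2\pi\mathbb{Z})^N$, obtain density of the full orbit from Kronecker--Weyl (the character computation correctly converts a nontrivial annihilating character into a rational relation among $\pi,\theta_1,\dots,\theta_N$), and then upgrade density to a uniform finite bound by covering the compact torus with finitely many $\gamma/2$-balls. The one point worth keeping explicit is the one you already flag: the literal claim that $R$ depends only on $N$ and $\gamma$ cannot hold if the $\theta_i$ are allowed to vary (take $N=1$ and $\theta_1$ irrational but tiny), so the constant necessarily depends on the fixed frequencies as well; your reading --- uniformity over the $2N$ unit vectors for fixed $\theta_1,\dots,\theta_N$ --- is exactly what the paper's construction uses, since there the $\theta_i=\theta(\lambda_i)$ are fixed and the proposition is invoked repeatedly with varying starting vectors. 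One minor loose end you could tighten: density of the $\mathbb{Z}$-orbit closure gives a subgroup equal to $\mathbb{T}^N$, and you should note (one line, via minimality of an ergodic translation, or by approximating $-\theta$ by positive multiples of $\theta$) that the forward orbit over $m\in\mathbb{N}$ is likewise dense, since the proposition asks for $m\in\mathbb{N}$.
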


Consequently, for $\lambda\in\sigma_{ell}(J_T)$ such that $\theta(\lambda)\not\in{\pi}\mathbb{Q}$, Proposition~\ref{5.14} gives an upper bound on the number of applications of the rotation matrix,  $R(\theta(\lambda))$, required to rotate an arbitrary vector, $\vec{f}\in\mathbb{R}^2$, into an acceptable region of the plane, $\mathbb{R}^2\setminus S_\epsilon$.

The following gives an analogous result for when $\theta(\lambda)$ is not rationally independent with $\pi$, although we do have to exclude from our consideration the case when $\theta(\lambda)=\frac{\pi}{2}$.

\begin{lemma}\label{5.5}
Let $\lambda\in\sigma_{ell}(J_T)$ with $\theta(\lambda)=\frac{p\pi}{q}\neq\frac{\pi}{2}$ where $\gcd(p,q)=1$.  Then, for any real vector, $\vec{f}\in\mathbb{R}^2$ and any $\epsilon\in\left(0,\frac{\pi}{2q}\right)$ there exists some $k\in\{0,1,2\}$ such that $R(k\theta)\vec{f}\not\in S_{\epsilon}$.
\end{lemma}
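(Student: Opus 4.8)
The plan is to argue that the set $S_\epsilon$ cannot be so large that three rotated copies of a single vector all land inside it; in fact we will show that among the rotations $\vec f$, $R(\theta)\vec f$, $R(2\theta)\vec f$ at least one misses $S_\epsilon$. First I would reduce to working on the circle of directions modulo $\pi$, i.e. identify a direction with an angle in $[0,\pi)$. The set $S_\epsilon$ is the union of four cones about $\pm W^*(\lambda)\vec e_2$ and $\pm R(-\theta)W^{-1}(\lambda)M(\lambda)B_1^{-1}(\lambda)\vec e_2$, each of opening angle $\epsilon$; but by Lemma~\ref{5.26} the two relevant axis directions are \emph{orthogonal}, so modulo $\pi$ the set $S_\epsilon$ consists of exactly two arcs, each of angular half-width $\epsilon/2$ (so total width $\epsilon$), whose centres differ by exactly $\pi/2$. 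Thus the ``bad set'' on the circle $\mathbb{R}/\pi\mathbb{Z}$ has Lebesgue measure $2\epsilon$ and very rigid geometry: two equal arcs at mutual distance $\pi/2$.

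Next I would examine where the three candidate directions sit. Writing $\phi$ for the angle of $\vec f$ modulo $\pi$, the three directions are $\phi$, $\phi+\theta$, $\phi+2\theta$ modulo $\pi$, where $\theta = p\pi/q$. If all three lay in $S_\epsilon$, then since $S_\epsilon$ has two connected components each of width $\epsilon < \pi/(2q) \le \pi/2$, by pigeonhole two of the three points lie in the \emph{same} component, hence are within $\epsilon$ of each other modulo $\pi$. The possible gaps between the three points are $\theta$, $\theta$, and $2\theta$ (mod $\pi$). So we would need one of $\theta$, $2\theta$, $3\theta$ — wait, more precisely one of $\{\theta,\ 2\theta\}$ reduced mod $\pi$ into $(-\pi/2,\pi/2]$ — to have absolute value less than $\epsilon$. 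Since $\theta = p\pi/q$ with $\gcd(p,q)=1$, the quantity $\theta$ mod $\pi$ is a nonzero multiple of $\pi/q$, hence has absolute value at least $\pi/q > \epsilon$; likewise $2\theta \bmod \pi$ is a multiple of $\pi/q$, and it is \emph{nonzero} precisely because $\theta \ne \pi/2$ (which is the excluded case $2\theta \equiv 0$), so $|2\theta \bmod \pi| \ge \pi/q > \epsilon$ as well. This contradiction shows at least one of $\phi,\phi+\theta,\phi+2\theta$ avoids $S_\epsilon$, i.e. one of $R(k\theta)\vec f$ for $k\in\{0,1,2\}$ lies outside $S_\epsilon$.

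I would then double back and handle the one subtlety I glossed over: the two components of $S_\epsilon$ might fail to be \emph{disjoint} if $\epsilon$ were comparable to $\pi/2$, but the hypothesis $\epsilon < \pi/(2q) \le \pi/2$ guarantees the two arcs of width $\epsilon$ centred $\pi/2$ apart do not overlap, so ``same component'' genuinely forces closeness by less than $\epsilon$. I would also note the pigeonhole step is clean: three points in two arcs forces two into one arc. The main obstacle — and the reason the case $\theta=\pi/2$ must be excluded and the reason $k$ must be allowed to range over $\{0,1,2\}$ rather than $\{0,1\}$ — is precisely controlling $2\theta \bmod \pi$: with only the rotations $\vec f$ and $R(\theta)\vec f$ one would need $|\theta \bmod \pi| \ge \epsilon$, which is fine, but the two rotated directions could still both sit in the \emph{same} arc with the \emph{other} arc empty, so one must also use $R(2\theta)\vec f$ and then rule out $2\theta$ being a near-multiple of $\pi$, which fails exactly when $\theta = \pi/2$. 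So the crux is the elementary number-theoretic fact that for $\theta = p\pi/q$ with $\gcd(p,q)=1$ and $\theta \ne \pi/2$, neither $\theta$ nor $2\theta$ is congruent to $0$ modulo $\pi$ within a window smaller than $\pi/q$.
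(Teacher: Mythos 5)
Your argument is correct, and it takes a genuinely different route from the paper's. The paper proceeds by a constructive case analysis: starting from $\vec f$ inside a cone at angle $\xi$ from its axis, it applies $R(\theta)$ once, checks arithmetically that the vector cannot land in the diametrically opposite cone (this forces $q=p$), and then, in the one remaining bad scenario where it lands in the orthogonal cone, applies $R(\theta)$ a second time and checks it cannot then land in the opposite cone (this forces $q=2p$, excluded by $\theta\neq\pi/2$). You instead pass to the circle of directions modulo $\pi$, where by Lemma~\ref{5.26} the bad set is two disjoint arcs, and run a pigeonhole: if all of $\phi,\ \phi+\theta,\ \phi+2\theta$ were bad, two would share an arc and hence be within the arc width of each other mod $\pi$, whereas every pairwise gap is a nonzero multiple of $\pi/q$ (nonzero for the gap $2\theta$ precisely because $q\nmid 2p$, i.e.\ $\theta\neq\pi/2$) and so exceeds the arc width. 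The underlying arithmetic ($q\nmid p$ and $q\nmid 2p$) is identical, but your global contradiction argument handles in one stroke all the landing configurations that the paper's step-by-step tracking must enumerate (and, as written, does not exhaustively enumerate — e.g.\ returning to the first cone after the second rotation), so your version is arguably tighter. One small calibration point: in the paper's convention a cone of ``opening angle $\epsilon$'' contains all directions within $\epsilon$ of its axis (see the relation $|\xi|<\epsilon$ in the paper's proof), so each arc on $\mathbb{R}/\pi\mathbb{Z}$ has full width $2\epsilon$ rather than $\epsilon$; this is exactly why the hypothesis is $\epsilon<\frac{\pi}{2q}$, i.e.\ $2\epsilon<\frac{\pi}{q}$, and your argument goes through verbatim with this reading.
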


\begin{proof}
If $\frac{\theta(\lambda)}{\pi}=\frac{p}{q}$, where $\gcd(p,q)=1$, then there are two cases:\newline
(Case One)~If $\vec{f}\not\in S_{\epsilon}$ then take $k=0$.\newline
(Case Two)~Otherwise $\vec{f}\in S_{\epsilon}$ and it needs to be shown that after a certain number of rotations the vector $\vec{f}$ no longer inhabits any of the four orthogonal bad cones. Define $\xi$ to be the angle between the vector $\vec{f}$ and the central axis of the cone containing $\vec{f}$ (see Figure~\ref{fig55} for more details). Clearly, we have the relation
 \begin{equation}\label{5.1}
|\xi|<\epsilon.
 \end{equation}
  Moreover, since $2\epsilon<\frac{\pi}{q}<\theta(\lambda)$, we also have that one application of the rotation matrix, $R(\theta)$, ensures that the vector $\vec{f}$ is moved to outside the first cone.  We now establish that the vector $\vec{f}$ is not moved into the bad cone opposite under the same single application of $R(\theta)$, i.e. $|\pi-\left(\xi+\frac{p\pi}{q}\right)|\geq\epsilon$. Recalling that $\epsilon<\frac{\pi}{2q}$ and \eqref{5.1}, we assume for contradiction
 \begin{align}
 \left|\pi-\left(\xi+\frac{p\pi}{q}\right)\right|<\epsilon &\Rightarrow\left|\pi-\frac{p\pi}{q}\right|<2\epsilon\nonumber\\
 &\iff \left|q\pi-p\pi\right|<\frac{2\pi}{2q}\cdot q\nonumber\\
 &\Rightarrow \left|q-p\right|\pi<\pi.\label{5.25}
  \end{align}
   This implies $q=p$, which is a contradiction. Thus the condition $\epsilon<\frac{\pi}{2q}$ guarantees that a rotation will move the vector, $\vec{f}$, outside of the first cone and not into the `third', i.e. the mirror image of the first.

 It still remains to consider
  \begin{equation}\label{5.2}
  \left|\frac{\pi}{2}-\left(\xi +\frac{p\pi}{q}\right)\right|<\epsilon,
  \end{equation}
  which describes the instance of applying the rotation matrix, $R(\theta)$, once, but only succeeding to move the vector from the first bad cone into the (orthogonal) `second'. Clearly, if this doesn't happen then one rotation is enough to move $\vec{f}$ outside of $S_\epsilon$ as $\theta<\pi$. Thus, we assume this is not the case and that \eqref{5.2} happens. We then consider the case of applying another rotation and moving from the second cone into the third. This is described by \begin{equation}\label{5.3} \left|{\pi}-\left(\xi +\frac{2p\pi}{q}\right)\right|<\epsilon.\end{equation}
   Then, by recalling that $\epsilon<\frac{\pi}{2q}$ and combining Equations \eqref{5.1} and \eqref{5.3} we see that
  \begin{align*}
  \left|{\pi}-\left(\xi +\frac{2p\pi}{q}\right)\right|<\epsilon&\Rightarrow \left|\pi-\frac{2p\pi}{q}\right|<2\epsilon\\
  &\iff \left|q\pi-{2p\pi}\right|<\frac{\pi}{2q}\cdot2 q\\
  &\Rightarrow \left|{q}-2p\right|\pi<\pi.
  \end{align*} This implies $q=2p$ which is a contradiction. This tells us that \eqref{5.3} can never happen; in particular, after at most two rotations the vector $\vec{f}$ will be rotated out of the set $S_\epsilon$.~\qedhere
\end{proof}

\begin{figure}
\centering
\begin{tikzpicture}
        \draw [thick, red , -] (0,-2) -- (0,4);
        \draw [dashed, red, -] (0,0) -- (4,1);
        \draw [dashed, red, -] (0,0) -- (4,-1);
        \draw [thick, black] (2,1/2) to [out=-45, in=45] (2,0)
        node [left,black] at (2,1/4) {$\epsilon$};
        \draw [thick, blue, ->] (0,0) -- (4,-1/2)
        node [right, blue] at (4,-1/2) {$\vec{f}$};
    \draw [ thick, red, -] (-4,0) -- (4,0);
    \draw [thick, black] (3,0) to [out=-45, in=45] (3,-3/8)
    node [left, black] at (3,-3/16) {$\xi$};
    \draw [dashed, red, -] (0,0) -- (-1,4);
     \draw [dashed, red, -] (0,0) -- (1,4);
      \draw [thick, black] (-1/2,2) to [out=45, in=135] (0,2)
        node [below,black] at (-1/4,2) {$\epsilon$};
         \draw [thick, black] (1/2,2) to [out=135, in=45] (0,2)
        node [below,black] at (1/4,2) {$\epsilon$};
         \draw [thick, black] (-2,1/2) to [out=-135, in=135] (-2,0)
        node [right,black] at (-2,1/4) {$\epsilon$};
           \draw [thick, black] (-2,0) to [out=-135, in=135] (-2,-1/2)
        node [right,black] at (-2,-1/4) {$\epsilon$};
        \draw [dashed, red, -] (0,0) -- (-1/2,-2);
     \draw [dashed, red, -] (0,0) -- (1/2,-2);
        \draw [thick, black] (-1/2,-2) to [out=-45, in=-135] (0,-2)
        node [above,black] at (-1/4,-2) {$\epsilon$};
         \draw [thick, black] (1/2,-2) to [out=-135, in=-45] (0,-2)
        node [above,black] at (1/4,-2) {$\epsilon$};

      \draw [dashed, red, -] (0,0) -- (-4,1);
     \draw [dashed, red, -] (0,0) -- (-4,-1);

\end{tikzpicture}
\caption{The four solid lines represent the central axes of the four bad cones comprising $S_\epsilon$ whilst the dashed lines represent the width of the respective cones, each with opening angle $\epsilon$. The vector $\vec{f}$ in this particular example resides in the rightmost bad cone, and is of angle $\xi$ from the central axis of the nearest bad cone.}
\label{fig55}
\end{figure}
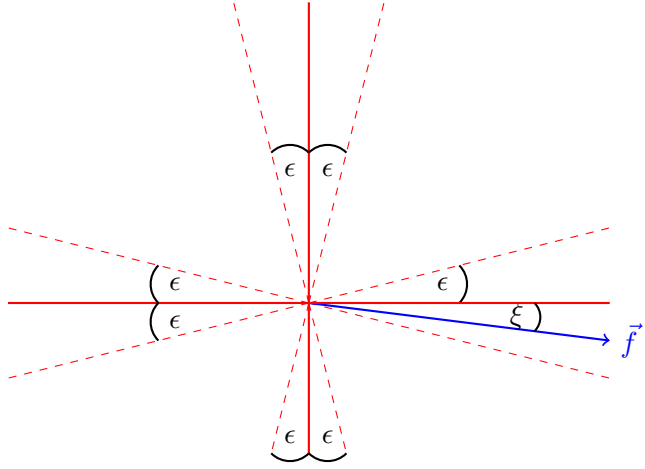

\begin{remark}
The geometric difficulty of those $\lambda$ with $\theta(\lambda)=\frac{\pi}{2}$ arises from the orthogonality of the bad cones. Thus if for these $\lambda$ the vector $\vec{f}$ should fall into one of the bad cones, no amount of rotating will ever relocate the vector into a shrinkable area of the domain (i.e. a region outside the four arbitrarily small bad cones) since the vector will just move from one bad cone into another.
\end{remark}

\begin{remark}
It should be stressed not only for this section, but also the ones that follow, that the only truly bad regions in the plane are along the central axes of the bad cones. Thus, when proving certain results (i.e. Lemmas~\ref{5.5}, \ref{5.43}, \ref{5.62} and \ref{5.63}) on the feasibility of manoeuvring certain vectors into shrinkable regions of the plane we only focus on avoiding the central axis of the bad-cones. This is because regardless of how close the vector $\vec{f}$ is to the central axis, providing the vector is not on it, the opening angle, $\epsilon$, can simply be reduced so that the bad cone is narrow enough for the vector, $\vec{f}$, to avoid the forbidden region altogether. However, this should not be misconstrued as a statement advocating the possibility of bad-lines, rather than bad-cones (arbitrarily small as they may be). The need for some opening angle, $\epsilon$, about the bad cones follows from Lemma~\ref{5.6} which tells us that if $\epsilon$ is zero then it is possible to choose $\vec{f}$ arbitrarily near to the central axis such that $$|\cos(W^*(\lambda)\vec{e}_2,\vec{f})\cos(R(-\theta)W^{-1}(\lambda)M(\lambda)B_1^{-1}(\lambda)\vec{e}_2,\vec{f})|\geq |\sin(\epsilon)|^2= 0.$$ In which case we only obtain the result $$\|\left(R(\theta)-\frac{\widetilde{q}_i}{a_1}W^{-1}(\lambda)A(\lambda)W(\lambda)\right)\vec{f}\|^2\leq\|\vec{f}\|^2+O\left(\widetilde{q}_i^2\right),$$ in particular there is no shrinking of $\|\vec{f}\|^2$ by a factor $C(\lambda)|\widetilde{q}_i|$.
\end{remark}

It follows from Proposition~\ref{5.14} and Lemma~\ref{5.5} that for all pairs $(\theta(\lambda),\vec{f})$, with the condition $\theta(\lambda)\neq \frac{\pi}{2}$, $\exists~n\leq n_0(\lambda;\epsilon)$, for $\epsilon$ small enough, such that $R(n\theta)\vec{f}\not\in S_\epsilon$, and thus by \eqref{1.9} the vector, $\vec{f}$, can be diminished in size by the application of a perturbed monodromy matrix with an appropriate potential $(q_n)$.

 The candidate eigenvector, and solution to the three-term recurrence problem, $u(\lambda)$ is constructed by choosing $u_0=0$ and $u_1=1$ and then applying the unperturbed monodromy matrices $M(\lambda)$ until the initial components are rotated into an acceptable region of the plane, i.e. $\mathbb{R}^2\setminus S_\epsilon$. By Proposition~\ref{5.14} and Lemma~\ref{5.5} this is guaranteed to happen. Then, the perturbed transfer matrix $M(\lambda-\widetilde{q}_n)$ is applied with an appropriate $\widetilde{q}_n$ so that the magnitude of the components of the eigenvector begin to decrease with factor $(1-C|\widetilde{q}_n|) $. After this shrinkage the initial components are either in the acceptable region (in which case we immediately apply another monodromy matrix, $M(\lambda-\widetilde{q}_{n+1})$) or they are again in $S_\epsilon$ and we apply Proposition~\ref{5.14} and Lemma~\ref{5.5}  again to move into a region where we may obtain shrinkage. We continue this process of rotation and shrinkage ad infinitum. Thus the potential will take the form $$(q_n)=(0,\dots,0,\widetilde{q}_1,0,\dots,0,\widetilde{q}_2,0,\dots,\dots,\widetilde{q}_i,0,\dots,0,\widetilde{q}_{i+1},\dots),$$ in particular the sequence is populated by many zero entries. This is to account for when the monodromy matrix is analogous to a rotation and the fact that only one of the $T$ transfer matrices is perturbed. It is important to note that the $\epsilon$-condition about the ‘bad cones’ is fixed for each $\lambda$.

We now wish to show that the candidate eigenvector, $\underline{u}$, is in the sequence space $l^2(\mathbb{N};\mathbb{R})$. This will use the following lemma.

\begin{lemma}
Let $\lambda\in\sigma_{ell}(J_T)$ such that $\theta(\lambda)\neq \frac{\pi}{2}$. Then, for  $\widetilde{q}_i\ll 1$ and $\left(\widetilde{q}_i\right)\in l^2$,  the candidate eigenvector $\underline{u}:=(u_n)_{n\geq 1}$, obtained by the procedure in this section, satisfies the estimate \begin{multline}\label{5.13}
\|\underline{u}(\lambda)\|^2\leq K_1(\lambda)\sum\limits_{i=0}^\infty\left[\prod\limits_{j=0}^i (1-C(\lambda)|\widetilde{q}_j|)\right](r_{i+1}+1)\\+K_2(\lambda)\sum\limits_{t=1}^\infty \prod_{s=1}^{t-1}(1-C(\lambda)|\widetilde{q}_s|)+K_3(\lambda)
\end{multline}
for some $K_1(\lambda), K_2(\lambda), K_3(\lambda)\in\mathbb{R}$, where $\widetilde{q}_0:=0$ and $r_i$ represents the $i$-th interval of rotation, i.e. the number of rotations required to rotate the  components  following the $i-1$-th shrinkage into the acceptable region, $\mathbb{R}^2\setminus S_\epsilon$, so that the perturbed monodromy matrix $M_i(\lambda)$ can be applied.
\end{lemma}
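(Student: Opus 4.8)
The plan is to track the candidate solution block-by-block in the coordinates adapted to the diagonalisation $M(\lambda)=W(\lambda)R(\theta)W^{-1}(\lambda)$. First I would group the entries of $\underline{u}$ into blocks of length $T$ and record the ``corner'' vectors $\vec{v}_m:=(u_{mT},u_{mT+1})^T$, so that $\vec{v}_{m+1}$ is obtained from $\vec{v}_m$ by one (perturbed or unperturbed) monodromy matrix. Setting $\vec{g}_m:=W^{-1}(\lambda)\vec{v}_m$, an unperturbed step becomes $\vec{g}_{m+1}=R(\theta)\vec{g}_m$, an exact Euclidean isometry, while a step using the perturbed monodromy $M_i(\lambda)$ becomes $\vec{g}_{m+1}=\left(R(\theta)-\frac{\widetilde{q}_i}{a_1}W^{-1}(\lambda)A(\lambda)W(\lambda)\right)\vec{g}_m$. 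The crucial gain is that in these coordinates the ill-conditioning of $W(\lambda)$ never compounds: a whole rotation phase is a single power $R(r\theta)$ of the rotation, hence still an isometry, and only one factor $\|W(\lambda)\|\,\|W^{-1}(\lambda)\|$ is ever lost, when passing between $\vec{v}_m$ and $\vec{g}_m$. Moreover, by construction (Proposition~\ref{5.14} and Lemma~\ref{5.5}) every perturbed step is applied only when $\vec{g}_m\notin S_\epsilon$, so \eqref{1.9} (equivalently \eqref{5.8}) yields $\|\vec{g}_{m+1}\|^2\le\|\vec{g}_m\|^2\left(1-C(\lambda)|\widetilde{q}_i|+O(\widetilde{q}_i^2)\right)$ with the sign of $\widetilde{q}_i$ chosen appropriately.

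Next I would pass from the corner vectors to the full $l^2$ norm. Within block $m$ the entries $u_{mT+1},\dots,u_{(m+1)T}$ are images of $\vec{v}_m$ under a product of at most $T$ transfer matrices $B_j(\lambda)$ (one possibly a perturbed $B_1(\lambda-\widetilde{q}_i)$, still uniformly bounded since $\widetilde{q}_i\to0$), so $\sum_{n=mT+1}^{(m+1)T}|u_n|^2\le D(\lambda)\|\vec{v}_m\|^2\le D(\lambda)\|W(\lambda)\|^2\|\vec{g}_m\|^2$ for a constant $D(\lambda)$ depending only on $\lambda$; hence $\|\underline{u}(\lambda)\|^2\le K_3(\lambda)+D(\lambda)\|W(\lambda)\|^2\sum_{m\ge0}\|\vec{g}_m\|^2$, where $K_3(\lambda)$ provisionally absorbs the finitely many initial terms and the bookkeeping between $\sum_{n\ge1}$ and the block decomposition. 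Since $\|\vec{g}_m\|$ is exactly preserved by rotation steps and only decreased by shrink steps, after the $i$-th shrinkage one has $\|\vec{g}_m\|^2\le\|\vec{g}_0\|^2\prod_{j=1}^i\left(1-C(\lambda)|\widetilde{q}_j|+O(\widetilde{q}_j^2)\right)$. Using $\widetilde{q}_j\to0$, $\widetilde{q}_j\ll1$ and, crucially, $(\widetilde{q}_j)\in l^2$, the product of the error factors $\prod_j\left(1+O(\widetilde{q}_j^2)\right)$ converges, so after halving $C(\lambda)$ if necessary and pulling out a finite constant one gets $\|\vec{g}_m\|^2\le(\mathrm{const})\prod_{j=0}^i\left(1-C(\lambda)|\widetilde{q}_j|\right)$ throughout the $(i+1)$-st rotation phase (with $\widetilde{q}_0:=0$, so the $j=0$ factor is harmless).

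Finally I would assemble $\sum_{m\ge0}\|\vec{g}_m\|^2$ by partitioning the blocks according to the rotation/shrinkage pattern. For each $i\ge0$, the $(i+1)$-st rotation phase consists of $r_{i+1}$ blocks on each of which $\|\vec{g}_m\|^2$ equals its value just after the $i$-th shrinkage, hence $\le(\mathrm{const})\prod_{j=0}^i(1-C(\lambda)|\widetilde{q}_j|)$; grouping these with the single adjoining shrink block accounts for the factor $(r_{i+1}+1)$ and produces the first sum in \eqref{5.13} (the case of consecutive shrinkages being just $r_{i+1}=0$, covered by the same formula), while a separate, cruder bound on the shrink blocks alone, reindexed by $t=i+1$, is dominated by $(\mathrm{const})\sum_{t\ge1}\prod_{s=1}^{t-1}(1-C(\lambda)|\widetilde{q}_s|)$ and yields the second sum, and $K_3(\lambda)$ absorbs the finite initial segment. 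Collecting the constants $D(\lambda)\|W(\lambda)\|^2\|\vec{g}_0\|^2$ and the convergent error-product constant into $K_1(\lambda),K_2(\lambda),K_3(\lambda)$ gives \eqref{5.13}. I expect the main obstacle to be the two conditioning issues flagged above: arranging that the ill-conditioning of $W(\lambda)$ contributes only a single multiplicative constant rather than compounding over the infinitely many rotation phases — which is the reason for working in the $\vec{g}$-coordinates rather than iterating \eqref{5.65} directly — and controlling the infinite product of the $O(\widetilde{q}_i^2)$ error terms, which is exactly where the hypothesis $(\widetilde{q}_i)\in l^2$ is used.
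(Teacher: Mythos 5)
Your proof is correct and takes essentially the same route the paper intends: the paper states this lemma without a formal proof, offering only the subsequent remark that attributes the three terms to (i) the $T$-th components together with incomplete unperturbed blocks, (ii) incomplete perturbed blocks, and (iii) the $O(\widetilde{q}_k^2)$ contributions, and your block decomposition is exactly the natural completion of that sketch. Your explicit point that one must track $\vec{g}_m=W^{-1}(\lambda)\vec{v}_m$ throughout, so that the condition number of $W(\lambda)$ enters only once rather than compounding through iteration of \eqref{5.65}, is a genuine clarification of a step the paper leaves implicit.
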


 \begin{remark}
  The first component of the expression comes from calculating the contribution at every $T$-th term, as well as all those interim components that arise from an incomplete unperturbed monodromy matrix, i.e. those $u_{kT+s}$ such that $$\left(\begin{array}{c}
   u_{kT+s}\\
   u_{kT+s+1}
   \end{array}\right)=B_s(\lambda)\dots B_1(\lambda)\left(\begin{array}{c}
   u_{kT}\\
   u_{kT+1}
   \end{array}\right)$$ for some $k\in\mathbb{N}, 1\leq s<T$. The second term estimates those components that arise from an incomplete perturbed monodromy matrix, i.e. those $u_{kT+s}$ such that $$\left(\begin{array}{c}
   u_{kT+s}\\
   u_{kT+s+1}
   \end{array}\right)=B_s(\lambda)\dots B_1(\lambda-\widetilde{q}_t)\left(\begin{array}{c}
   u_{kT}\\
   u_{kT+1}
   \end{array}\right)$$ for some $k, t\in\mathbb{N}, 1\leq s<T$ and uses the fact that $\|B_1(\lambda-\widetilde{q}_t)\|$ is uniformly bounded in $t$. Finally, the third term factors in the $O(\widetilde{q}^2_k)$ contributions.
\end{remark}

Finally, setting $\widetilde{q}_i:=\frac{c_0}{i}$ for some constant $c_0$, and invoking Proposition~\ref{5.14} and Lemma~\ref{5.5} to establish that there exists some $R$ such that $r_i\leq R$ for all $i$ gives:
 \begin{align*}
&\left[\prod_{j=0}^i\left(1-C(\lambda)|\widetilde{q}_j|\right)\right](r_{i+1}+1)\leq (R+1)e^{\sum\limits_{j=1}^i\log(1-C(\lambda)|\widetilde{q}_j|)}\\
&=(R+1)e^{-\sum\limits_{j=1}^iC(\lambda)|\widetilde{q}_j|+{O(\widetilde{q}_j^2)}}\asymp e^{-C(\lambda)\sum\limits_{j=1}^i\frac{c_0}{j}}\sim e^{-C(\lambda)\cdot c_0\ln i}=\frac{1}{i^{C(\lambda)c_0}}
\end{align*} which is in $l^1(\mathbb{N};\mathbb{C})$ as $i\rightarrow\infty$, providing $C(\lambda)c_0>1$. Thus, we need $c_0>\frac{1}{C(\lambda)}$. This concludes the argument that $\underline{u}\in l^2$.

We can summarise the above result in the following theorem.

\begin{theorem}\label{5.41}
Let $J_T$ be an arbitrary period-$T$ Jacobi operator and $\lambda\in\sigma_{ell}(J_T)\setminus\{x|\theta(x)=\frac{\pi}{2}\}$. Then there exists a potential $q_n=O\left(\frac{1}{n}\right)$, with $q_n\neq o\left(\frac{1}{n}\right)$, and a non-zero vector $(u_n)_{n\geq1}\in l^2(\mathbb{N};\mathbb{R})$ such that $$(J_T+Q)(u_n)_{n\geq 1}=\lambda (u_n)_{n\geq 1}$$ where $Q$ is an infinite diagonal matrix with entries $(q_n)$.
\end{theorem}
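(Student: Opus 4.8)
The plan is to assemble the rotation--shrinkage construction sketched above and then to verify the three assertions of the statement: the decay rate of $q_n$, membership $\underline u\in\ell^2$, and the eigenvalue equation. First I would define $\underline u=(u_n)_{n\ge1}$ through the three-term recurrence associated with $J_T+Q$, started from $u_0:=0$ and $u_1:=1$. With $u_0=0$ the $n=1$ recurrence coincides with the first row of the eigenvalue equation, so the relation $(J_T+Q)\underline u=\lambda\underline u$ holds \emph{automatically} once $\underline u\in\ell^2(\mathbb N;\mathbb R)$; the theorem therefore reduces to exhibiting a potential for which this holds. The potential is built in alternating blocks: a \emph{rotation block}, on which $q_n\equiv0$ and the recurrence advances through unperturbed monodromy matrices $M(\lambda)$, followed by a single \emph{shrinkage step}, on which the $i$-th nonzero entry $\widetilde q_i:=c_0/i$ (with a sign to be fixed) is inserted at a position $\equiv1\bmod T$, advancing the recurrence through the perturbed monodromy $M_i(\lambda)$.

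The geometric heart of the argument is to track the transformed period-start data $\vec f_k:=W^{-1}(\lambda)\binom{u_{kT}}{u_{kT+1}}$, which is legitimate because $M(\lambda)=W(\lambda)R(\theta)W^{-1}(\lambda)$ and, by Lemma~\ref{5.9}, $W(\lambda)$ may be taken real. An unperturbed monodromy sends $\vec f_k\mapsto R(\theta)\vec f_k$, a genuine rotation and hence norm-preserving, whereas a perturbed one sends $\vec f_k\mapsto\bigl(R(\theta)-\tfrac{\widetilde q_i}{a_1}W^{-1}(\lambda)A(\lambda)W(\lambda)\bigr)\vec f_k$. By Proposition~\ref{5.14} (when $\theta(\lambda)\notin\pi\mathbb Q$) or Lemma~\ref{5.5} (when $\theta(\lambda)=p\pi/q\neq\pi/2$), after at most $R=R(\lambda;\epsilon)$ rotations the vector $\vec f_k$ lies in $\mathbb R^2\setminus S_\epsilon$, and for such $\vec f_k$ the estimate~\eqref{1.9} shows that one perturbed monodromy with an appropriately signed $\widetilde q_i$ multiplies $\|\vec f_k\|^2$ by $1-C(\lambda)|\widetilde q_i|+O(\widetilde q_i^2)$. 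Iterating rotation blocks and shrinkage steps indefinitely produces $\underline u$ together with its potential $(q_n)$.

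It then remains to confirm $\underline u\in\ell^2$. Since $r_i\le R$ for every $i$ (again by Proposition~\ref{5.14} and Lemma~\ref{5.5}) and $\widetilde q_i=c_0/i$, one has $\prod_{j\le i}(1-C(\lambda)|\widetilde q_j|)\asymp i^{-C(\lambda)c_0}$, which is $\ell^1$ in $i$ as soon as $c_0>1/C(\lambda)$; substituting this, and the uniform bound $r_i\le R$, into the estimate~\eqref{5.13} for $\|\underline u(\lambda)\|^2$ makes its right-hand side finite. The three summands there account, respectively, for the period-start vectors $\binom{u_{kT}}{u_{kT+1}}=W(\lambda)\vec f_k$ together with the interim components $u_{kT+s}$, $1\le s<T$, obtained from the uniformly bounded partial products $B_s(\lambda)\cdots B_1(\lambda)$; for the analogous interim components coming from $B_s(\lambda)\cdots B_1(\lambda-\widetilde q_t)$, using that $\|B_1(\lambda-\widetilde q_t)\|$ is uniformly bounded in $t$; and for the $O(\widetilde q_k^2)$ contributions. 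Hence $\underline u\in\ell^2(\mathbb N;\mathbb R)$; it is real because all transfer matrices and all $\widetilde q_i$ are real, and non-zero because $u_1=1$. Finally, since the shrinkage steps occur at indices separated by gaps bounded above by $(R+1)T$, the $i$-th one sits at an index $n\asymp i$, so $q_n=O(1/n)$ while $nq_n\not\to0$ along that subsequence, i.e.\ $q_n\neq o(1/n)$; this yields every claim.

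The step requiring the most care is that a perturbed monodromy need not be a contraction in the \emph{original} coordinates: the factor $\|W(\lambda)\|^2\|W^{-1}(\lambda)\|^2\ge1$ appearing in~\eqref{5.65} can outweigh the gain $C(\lambda)|\widetilde q_i|$ of a single step. The remedy, already built into the construction, is to measure shrinkage only in the $W^{-1}$-transformed coordinates $\vec f_k$, where $M(\lambda)$ acts as the rotation $R(\theta)$ and the contraction of~\eqref{1.9} is genuine and accumulates multiplicatively; the distortion caused by $W(\lambda)$ is then confined to the fixed constants $K_1(\lambda),K_2(\lambda),K_3(\lambda)$ of~\eqref{5.13}. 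A second point not to overlook is that the rotation-block lengths $r_i$ must be bounded \emph{uniformly} in $i$, not merely finite for each $i$ — otherwise the series in~\eqref{5.13} need not converge — and this uniformity is exactly what the bound $m\le R(N,\gamma)$ in Proposition~\ref{5.14} and the ``at most two rotations'' conclusion of Lemma~\ref{5.5} provide.
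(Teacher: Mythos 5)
Your proposal is correct and follows essentially the same route as the paper: the alternating rotation/shrinkage construction starting from $u_0=0$, $u_1=1$, the uniform bound $r_i\le R$ from Proposition~\ref{5.14} and Lemma~\ref{5.5}, the choice $\widetilde q_i=c_0/i$ with $c_0>1/C(\lambda)$, and the summation of \eqref{5.13} to get $\underline u\in\ell^2$. Your two cautionary remarks (absorbing the $\|W\|\,\|W^{-1}\|$ distortion into fixed constants by working in the $W^{-1}$-transformed coordinates, and the necessity of a uniform bound on the rotation-block lengths) are exactly the points the paper relies on implicitly.
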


\begin{remark}
The Coulomb-type decay of $(q_n)$ gives that the potential is a compact perturbation and therefore the essential spectrum of the operators $J_T$ and $J_T+Q$ coincide. Consequently, the eigenvalue $\lambda$ in Theorem~\ref{5.41} is embedded in the essential spectrum of the operator $J_T+Q$.
\end{remark}

\section{Embedding infinitely many eigenvalues.}\label{sec1}

In this section we adapt the method from \cite{9g} to  embed infinitely many eigenvalues, simultaneously, into the essential spectrum using a single potential, $q_n$. Indeed, the problem that arises whenever one contemplates the embedding of multiple eigenvalues is that not only must all the two dimensional vectors, $$\vec{f}_1,\vec{f}_2,\dots,\vec{f}_n,\dots,$$ corresponding to the respective candidate eigenvalues $$\lambda_1,\lambda_2,\dots,\lambda_n,\dots,$$ inhabit shrinkable areas of the plane, but also that the respective $\vec{f}_i$ are in compatible shrinkable areas, since the reduction factor $$(1-C(\lambda)|\widetilde{q}_i|)\|\vec{f}\|^2$$ used in the single eigenvalue case depended on choosing the sign of the potential-component, $\widetilde{q}_k$, correctly. Otherwise as the components of one candidate eigenvector are decreased, the components for the other eigenvector will possibly be increased. Consequently, in this and the following three sections, it is necessary to impose restrictions on the set of eigenvalues being embedded to ensure it is possible that the initial components, $\vec{f}_i$, can all be manoeuvred simultaneously into compatible, shrinkable regions of the real plane regardless of where they first lie. Throughout this section, we insist that any finite selection of eigenvalues from the infinite set have quasi-momenta rationally independent with each other and $\pi$.

The following definition will be needed throughout the rest of this paper.

\begin{definition}
Let $S_\epsilon^{(i)}$ denote the four arbitrarily small orthogonal cones each with opening angle $\epsilon$ about the vectors $\pm R(-\theta(\lambda_i))W^{-1}(\lambda_i)M(\lambda_i)B_1^{-1}(\lambda_i)\vec{e}_2$ and $\pm W^*(\lambda_i)\vec{e}_2$.
\end{definition}

Without loss of generality, the orthogonal bad cones, $S_\epsilon^{(i)}$, corresponding to each distinct $\lambda_i$ to be embedded can be standardised/rotated so that the quadrants (i.e. those regions between bad cones) where the potential, $\widetilde{q}_k$, needs to be positive in order for the respective eigenvector to shrink are the same quadrants as for the other eigenvectors to be embedded (see Figure~\ref{fig51} for more details). This results in rotating the vector with the initial components $(u_0,u_1)^T$ about the plane; however, since all the lemmas in this paper are independent of the location of any of the starting vectors then the results remain valid. Consequently, we will assume all $S_\epsilon^{(i)}$ are equal to some standard $S_\epsilon$ and then it is possible to visualise the various rotations by different angles $\theta(\lambda_i)$, acting in the same plane to avoid the same cone, $S_\epsilon$. This implies that for all candidate eigenvector solutions to simultaneously receive sufficient shrinkage, it is sufficient to rotate the relevant vector components into the same quadrant of the plane as those for the other eigenvalues, or into quadrants diametrically opposite. This leads to the following definition for the acceptable region, $A_\epsilon$, which will be used throughout the rest of this paper.

\begin{definition}\label{5.10}
Let $\vec{f}_i\in\mathbb{R}^2$ for $i\in\{1,\dots,n\}$. The collection of vectors $(\vec{f}_1,\dots,\vec{f}_n)\in A_\epsilon$ with $A_\epsilon\subseteq\mathbb{R}^{2n}$ iff for all $i,j\in\{1,\dots,n\}$ we have $f_i\not \in S_{\epsilon}$ and $Q\vec{f}_i\equiv Q\vec{f}_j \mod 2,$ where $Q:\mathbb{R}^2\mapsto \{1,2,3,4\}$, where $\{1,2,3,4\}$ correspond to the quadrants in Figure~\ref{5.72}. Informally, a collection of vectors belong to $A_\epsilon$ if they reside in compatible regions of the plane.
\end{definition}

 As an example consider the vectors $\vec{f}_1,\vec{f}_2,\vec{f}_3,\vec{f}_4$ in Figure~\ref{5.72}. The collection $\left(\vec{f}_1,\vec{f}_3\right)\in A_\epsilon$, as the vector $\vec{f}_1$ inhabits quadrant $1$ and $\vec{f}_3$ inhabits quadrant $3$, and $1\equiv 3 \mod 2$. For similar reasons $\left(\vec{f}_2,\vec{f}_4\right)\in A_\epsilon$. However the collection $\left(\vec{f}_1,\vec{f}_2\right)\not\in A_\epsilon$ as $\vec{f}_1$ inhabits quadrant $1$ and $\vec{f}_2$ inhabits quadrant $2$ and $1\not\equiv 2 \mod 2$. The collection $\left(\vec{f}_3,\vec{f}_4\right)$ does not belong to $A_\epsilon$ by a similar argument.

\begin{figure}
\centering
\begin{tikzpicture}
   \draw [ultra thick, red, -] (0,-2) -- (0,2)
        node [right,black] at (-3,2) {Before Stand.}
        node  [left, black] at (2,2) {$\lambda_1$}    % draw y-axis line
        node [left, black] at (-1/2,1) {\Huge +}
        node [right,black] at (1/2,1) {\Huge $-$}
         node [left, black] at (-1/2,-1) {\Huge $-$}
        node [right,black] at (1/2,-1) {\Huge +};              % add label for y-axis
    \draw [ultra thick, red, -] (-2,0) -- (2,0);

    \draw [ultra thick, red, -] (3,2) -- (7,-2)
         node  [left, black] at (7,2) {$\lambda_2$}
        node [right, black] at (6,0) {\Huge $-$}
        node [above,black] at (5,1) {\Huge $+$}
         node [below, black] at (5,-1) {\Huge $+$}
        node [left,black] at (4,0) {\Huge $-$};              % add label for y-axis
    \draw [ultra thick, red, -] (7,2) -- (3,-2);      % draw x-ax

    \draw [ultra thick, red, -] (0,-3) -- (0,-7)
        node [right,black] at (-3,-3) {After Stand.}
        node  [left, black] at (2,-3) {$\lambda_1$}    % draw y-axis line
        node [left, black] at (-1/2,-4) {\Huge +}
        node [right,black] at (1/2,-4) {\Huge $-$}
         node [left, black] at (-1/2,-6) {\Huge $-$}
        node [right,black] at (1/2,-6) {\Huge +};              % add label for y-axis
    \draw [ultra thick, red, -] (-2,-5) -- (2,-5);      % draw x-axis line

    \draw [ultra thick, red, -] (5,-3) -- (5,-7)
        node  [left, black] at (7,-3) {$\lambda_2$}    % draw y-axis line
        node [left, black] at (5-1/2,-4) {\Huge +}
        node [right,black] at (5+1/2,-4) {\Huge $-$}
         node [left, black] at (5-1/2,-6) {\Huge $-$}
        node [right,black] at (5+1/2,-6) {\Huge +};              % add label for y-axis
    \draw [ultra thick, red, -] (3,-5) -- (7,-5);      % draw x-axis line
\end{tikzpicture}
\caption{The thick axes illustrate the arbitrarily narrow cones to be avoided for two candidate eigenvalues $\lambda_1,\lambda_2$ respectively, with the first row representing the situation before any attempt at standardization has been made, and the second afterwards, specifically once the orthogonal bad cones corresponding to $\lambda_2$ have been appropriately rotated. The $+,-$ signs in each acceptable quadrant indicate the sign that the relevant non-zero component of the potential, $\widetilde{q}_k$, must take in order for the candidate eigenvector to shrink sufficiently. Observe that after the standardization has been made (i.e. the second row of axes) the same quadrant can be chosen for both $\lambda_1$ and $\lambda_2$ and it will be consistent in terms of the sign required of the potential component, $\widetilde{q}_k$, to produce shrinkage.}
\label{fig51}
\end{figure}
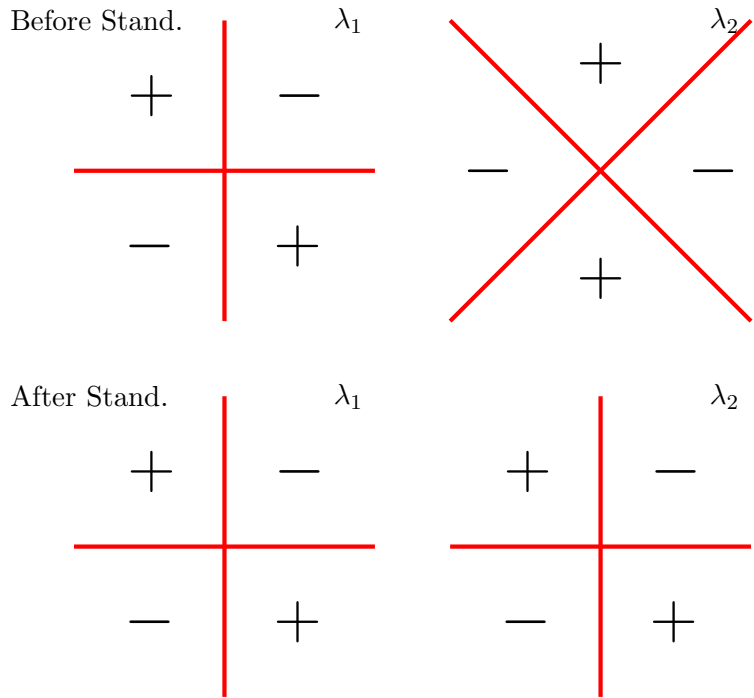

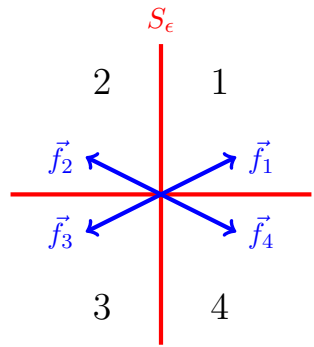
\begin{figure}
\centering
\begin{tikzpicture}
    \draw [ultra thick, red, -] (0,-2) -- (0,2)
        node  [above, red] at (0,2) {$S_\epsilon$}    % draw y-axis line
        node [left, black] at (-1/2,1.5) {\Large 2}
        node [right,black] at (1/2,1.5) {\Large 1}
         node [left, black] at (-1/2,-1.5) {\Large 3}
        node [right,black] at (1/2,-1.5) {\Large 4};              % add label for y-axis
    \draw [ultra thick, red, -] (-2,0) -- (2,0);
    \draw [ultra thick, blue, ->] (0,0) -- (-1,2/4)
   node  [right, blue] at (1,2/4) {$\vec{f}_1$}
   node  [left, blue] at (-1,2/4) {$\vec{f}_2$}
   node  [right, blue] at (1,-2/4) {$\vec{f}_4$}
   node  [left, blue] at (-1,-2/4) {$\vec{f}_3$}     ;
    \draw [ultra thick, blue, ->] (0,0) -- (1,2/4);
    \draw [ultra thick, blue, ->] (0,0) -- (1,-2/4);
    \draw [ultra thick, blue, ->] (0,0) -- (-1,-2/4);      % draw x-axis line

\end{tikzpicture}
\caption{The thick axes illustrate $S_\epsilon$, the arbitrarily narrow cones to be avoided. The numbers $1,2,3,4$ label the acceptable quadrants between the cones and are used in Definition~\ref{5.10}, whilst the vectors $\vec{f}_1,\vec{f}_2,\vec{f}_3,\vec{f}_4$ are employed in the discussion that follows the definition. }
\label{5.72}
\end{figure}

\begin{theorem}\label{5.45}
Let $J_T$ be an arbitrary period-$T$ Jacobi operator and $\{\lambda_i\}_{i=1}^\infty$ a sequence of complex numbers belonging to $\sigma_{ell}(J_T)$, where any finite collection $$\{\pi,\theta(\lambda_{1}),\theta(\lambda_{2}),\dots,\theta(\lambda_{k})\}$$ is rationally independent. Then for any positive sequence $K(n)$ with $K(n)\rightarrow\infty$ arbitrarily slowly as $n\rightarrow\infty$, there exists a potential $|q_n|\leq \frac{K(n)}{n}$ for all $n$, non-zero vectors $(u_{n,i})_{n\geq1}\in l^2(\mathbb{N};\mathbb{R})$ and an infinite diagonal matrix, $Q$, with entries $(q_n)$,  such that $$(J_T+Q)(u_{n,i})_{n\geq1}=\lambda_i (u_{n,i})_{n\geq1}.$$
\end{theorem}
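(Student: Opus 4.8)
The plan is to run the rotation-and-shrinkage construction of Section~\ref{sec2} simultaneously for every $\lambda_i$, feeding the decay budget $K(n)/n$ to infinitely many eigenvectors by activating them one at a time. First I would fix, for each $i$, the real invertible matrix $W(\lambda_i)$ (Lemma~\ref{5.9}), the rotation $R(\theta(\lambda_i))$, and the standardized bad-cone set $S_\epsilon$ as in the discussion preceding Definition~\ref{5.10}, so that a single common $S_\epsilon$ and a single common choice of ``good'' quadrants governs all eigenvectors and the sign of each nonzero potential component $\widetilde q_k$ is consistent across all $\lambda_i$. Each candidate vector is initialised by $(u_{0,i},u_{1,i})^T=(0,1)^T$ (or any fixed nonzero vector). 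The key structural estimate is still \eqref{1.9}/\eqref{5.65}: whenever the pair $(\theta(\lambda_i),\vec f_i)$ has $\vec f_i\notin S_\epsilon$, applying the perturbed monodromy matrix $M_i(\lambda_i)$ with an appropriately signed small $\widetilde q_k$ contracts $\|\vec f_i\|$ by a factor $1-C(\lambda_i)|\widetilde q_k|+O(\widetilde q_k^2)$, and by the standardization this sign is simultaneously correct for every other $\vec f_j$ that also currently lies outside $S_\epsilon$.

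The heart of the argument is the simultaneous manoeuvring. At a given ``stage'' $N$ I would work with the first $N$ candidate vectors $\vec f_1,\dots,\vec f_N$: by Proposition~\ref{5.14}, applied with the rationally independent collection $\{\pi,\theta(\lambda_1),\dots,\theta(\lambda_N)\}$ and the target vectors chosen in the interior of a common good quadrant, there is $m\le R(N,\gamma)$ such that simultaneously $R^m(\theta(\lambda_i))\vec f_i$ lies within angle $\gamma$ of that quadrant's axis for all $i\le N$; choosing $\gamma$ small relative to $\epsilon$ puts all $N$ vectors into $A_\epsilon$ at once. I then apply one perturbed block $M_i(\lambda_i)$, which shrinks all $N$ of them by a common factor $1-C_N|\widetilde q_k|+O(\widetilde q_k^2)$ with $C_N:=\min_{i\le N}C(\lambda_i)>0$, and repeat. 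The bookkeeping is to choose, for the $n$-th nonzero potential slot, its magnitude $\widetilde q_n \asymp c_0/n$ (with $c_0$ large) but to let the number of active eigenvectors $N=N(n)$ grow to infinity slowly enough that the accumulated rotation budget stays controlled; since $K(n)\to\infty$ however slowly, one can afford $R(N(n),\gamma)$-many pure-rotation steps between shrinkages while keeping the total potential over any window of length $\asymp T\,R(N(n),\gamma)$ below $K(n)/n$. For each fixed $i$, once $N(n)\ge i$ (which happens for all large $n$) the $i$-th vector is shrunk at every subsequent active step by $1-C(\lambda_i)|\widetilde q_n|$, so exactly the Section~\ref{sec2} telescoping $\prod_j(1-C(\lambda_i)|\widetilde q_j|)\asymp j^{-C(\lambda_i)c_0}$ applies from that point on; taking $c_0$ large enough that $C(\lambda_i)c_0>1$ for all $i$ (using $\inf_i C(\lambda_i)>0$ on a fixed finite collection, or handling the $\inf$ carefully) gives each $(u_{n,i})_n\in \ell^2$ via the estimate \eqref{5.13}, after absorbing the uniformly bounded contributions of $W(\lambda_i),W^{-1}(\lambda_i)$ and the incomplete-monodromy-block terms exactly as in the single-eigenvalue case. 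The finitely many initial stages before the $i$-th eigenvector becomes active contribute only a finite amount to $\|\underline u(\lambda_i)\|^2$ and so do not affect square-summability.

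Finally, the potential itself is read off the construction: $(q_n)$ is zero except at the nonzero slots, where it equals the signed $\widetilde q_n \asymp c_0/n$ chosen above, so $|q_n|\le K(n)/n$ for all $n$ by the slot-spacing estimate, and $Q:=\operatorname{diag}(q_n)$ is a genuine bounded (indeed compact) diagonal perturbation. By construction $(u_{n,i})_n$ solves the three-term recurrence for $(J_T+Q)$ at spectral parameter $\lambda_i$, and being in $\ell^2(\mathbb{N};\mathbb{R})$ it is a genuine eigenvector; the Coulomb-type decay of $q_n$ gives $\sigma_{ess}(J_T+Q)=\sigma_{ess}(J_T)$, so each $\lambda_i$ is embedded.

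I expect the main obstacle to be the \emph{quantitative} coordination between the growth of $N(n)$ and the decay rate: Proposition~\ref{5.14} only bounds $m$ by a constant $R(N,\gamma)$ depending on $N$, which blows up as $N\to\infty$, so one must interleave ``switch on a new eigenvector'' events sparsely enough that between any two shrinkages of a \emph{fixed} $\vec f_i$ the number of intervening pure rotations is uniformly bounded in the tail (i.e.\ $r_i\le R$ eventually, as required for \eqref{5.13} to telescope), while simultaneously keeping $\sum q_n$ over each length-$O(TR(N(n),\gamma))$ window below $K(n)/n$ --- this is the delicate part where the arbitrariness of $K(n)\to\infty$ is exploited, and where one must verify that the finitely-many pre-activation stages for each $i$ really do contribute only a finite, $i$-dependent constant to $\|\underline u(\lambda_i)\|^2$.
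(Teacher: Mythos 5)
Your overall architecture matches the paper's: exhaust $\{\lambda_i\}_{i=1}^\infty$ by an increasing chain of finite subsets, use Proposition~\ref{5.14} at each stage to steer all currently active vectors simultaneously into the standardized good quadrants, shrink with a consistently signed $\widetilde{q}_k$, and telescope the products. However, there are two quantitative gaps at exactly the point you yourself flag as delicate, and the paper's resolution differs from what you propose. First, you take $|\widetilde{q}_n|\asymp c_0/n$ with a \emph{fixed} $c_0$ and ask that $C(\lambda_i)c_0>1$ for all $i$. Over an infinite family the constants $C(\lambda_i)$ need not have a positive lower bound, and since there is a single shared potential you cannot tune $c_0$ per eigenvalue; so this choice fails for all but finitely many $\lambda_i$. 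The paper instead takes $|\widetilde{q}_m|=L_m/m$ with $L_m\to\infty$ (concretely $L_n=(K(\widetilde{n}))^{1/3}$, kept square-summable), so that for each fixed $\lambda$ one eventually has $C(\lambda)L_s>2$ and the tail product decays faster than any fixed power. This is the real reason the theorem allows $|q_n|\le K(n)/n$ with $K(n)\to\infty$ rather than $q_n=O(1/n)$ as in the single-eigenvalue Theorem~\ref{5.41}; the growing factor $K(n)$ is not merely a budget for the lengthening rotation intervals.

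Second, your requirement that the rotation counts be uniformly bounded in the tail (``$r_i\le R$ eventually'') cannot be met: Proposition~\ref{5.14} gives a bound $R(N,\gamma)$ that genuinely grows with the number $N$ of simultaneously steered vectors, and since your $N(n)\to\infty$ the intervals of pure rotation are necessarily unbounded. The paper does not attempt a uniform bound; it slows the growth of the sets $N_k$ (repeating each set sufficiently often) so that $R(N_m)=O(\sqrt{m})$, and then absorbs the resulting prefactor $R(N_{m+1})+1$ in the norm estimate \eqref{4.12} into the convergent sum $\sum_{m}(1+\sqrt{m})\,m^{-2}$ --- which again relies on $L_m\to\infty$ to push the decay exponent above $2$. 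With these two corrections --- $L_n\to\infty$ in place of a fixed $c_0$, and controlled (not bounded) growth of the rotation intervals --- your argument coincides with the paper's; as written, the $\ell^2$ claim and the tail-uniformity claim both fail.
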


 \begin{proof}
 The ability to embed infinitely many eigenvalues using a result (Proposition~\ref{5.14}) that is only valid for finitely many eigenvalues follows by breaking up the set $\{\lambda_i\}_{i=1}^\infty$ to be embedded into an increasing sequence of finite subsets, $N_k$, such that $$N_1\subset N_2\subset N_3\subset\dots \subset N_k\subset\dots$$ and $$\bigcup\limits_{k=1}^\infty N_{k}=\{\lambda_i\}_{i=1}^\infty.$$ Then at each particular stage we are only ever dealing with finitely many eigenvalues (with quasi-momenta rationally dependent with each other and $\pi$) which Proposition~\ref{5.14} is sufficient to deal with. For instance, consider a possible sequence $N_k$ where $$N_1=\{\lambda_1\},N_2=\{\lambda_1,\lambda_2\},N_3=\{\lambda_1,\lambda_2,\lambda_3\},\dots.$$ Then for the first stage of our calculation we are concerned only with embedding one eigenvalue, $\lambda_1$, so that we deal with rotating the initial components corresponding to this candidate eigenvector, $(u_{n,1})$  into a shrinkable area of the plane, and applying an appropriate potential $\widetilde{q}_1$ once there for the reduction to take effect. (As in the single eigenvalue case we denote the number of rotations necessary by $r_1$). Then move onto $N_2=\{\lambda_1,\lambda_2\}$ and use Proposition~\ref{5.14} to find a bound, $r_2$, on the number of rotations necessary to manoeuvre the appropriate components of both candidate eigenvectors, $(u_{n,1}),(u_{n,2})$, simultaneously into consistent shrinkable areas of the plane, $A_\epsilon$, and applying a potential $\widetilde{q}_2$. The process is then continued by considering the set $N_3$, et cetera.

 Consequently, the proof rests on showing that an eigenvector, $\underline{u}$, corresponding to an arbitrary $\lambda\in\{\lambda_i\}_{i=1}^\infty$, belongs to the sequence space $l^2(\mathbb{N};\mathbb{R})$; the idea being somewhat similar to the three-step single eigenvalue technique. However, now the eigenvalue corresponding to this eigenvector might not necessarily be in the set $N_1$ (the set of eigenvalues whose components are rotated into their respective cones straight away). Instead, we assume $\lambda\in N_i$ for all $i\geq k$ and $\lambda\not\in N_j$ for all $j<k$ where $k\in\mathbb{N}$. In particular $\lambda$ first appears in the set $N_k$, and we denote the contribution to the square of the norm from the initial components as $A_k(\lambda)$. Then continuing as before in the single eigenvalue case one uses Proposition~\ref{5.14} to ensure that all the vectors $\vec{f}_i$ corresponding to the elements of $N_j$ are simultaneously rotated into shrinkable regions of the plane.

Thus, the norm of the eigenvector solution can be estimated by
\begin{multline}\label{4.12}
\|\underline{u}(\lambda)\|^2\leq A_k(\lambda)+D_k(\lambda)\sum\limits_{m=k}^\infty\left[ \prod\limits_{j=k}^m (1-C(\lambda)|\widetilde{q}_j|)\right](R(N_{m+1})+1)\\+F_k(\lambda)\sum\limits_{t=k}^\infty\left[\prod\limits_{s=k}^{t-1}(1-C(\lambda)|\widetilde{q}_s|)\right],
\end{multline} where $A_k(\lambda),D_k(\lambda),F_k(\lambda)\in\mathbb{R},\widetilde{q}_0:=0$ and $R(N_{m})$ corresponds to the number of rotations necessary to simultaneously rotate the collection of vectors corresponding to elements of the set $N_m$ into consistent acceptable regions of the plane, $A_\epsilon$.

It remains to show that this sum is bounded; in particular that the vector, $\underline{u}$, belongs to the sequence space $l^2(\mathbb{N};\mathbb{R})$.
Now, we use an approach similar to that used in the proof of Theorem 1 in \cite{9g} and reproduce some of the intermediate steps. Letting $|\widetilde{q}_m|=\frac{L_m}{m}$, where $L_m$ will be specified below such that $L_m\rightarrow\infty$ as $m\rightarrow\infty$, and $\frac{L_m}{m}$ is square-summable, we observe that
\begin{align*}
&\left[\prod_{j=k}^m\left(1-C(\lambda)|\widetilde{q}_j|\right)\right]\leq \exp\left(\sum\limits_{j=k}^m\log(1-C(\lambda)|\widetilde{q}_j|)\right)\\
&\leq \exp\left(-C(\lambda)\sum\limits_{j=k}^m |\widetilde{q}_j|\left(1+O\left(\widetilde{q}_j\right)\right)\right)\leq H_k(\lambda) \exp\left(-C(\lambda)\sum\limits_{j=1}^m |\widetilde{q}_j|\right),
\end{align*} where $H_k(\lambda)$ is a function of $\lambda$ and absorbs the contribution from the $\widetilde{q}_k^2$ terms.

Unlike in the single eigenvalue case, the term $R(N_{k})$ is now no longer constant and in fact grows. However, by choosing the sets $N_k$ appropriately the term $R(N_{k})$ can be controlled so as to be of order $\sqrt{k}$. This can be done by repeating the sets $N_i$ sufficiently often, i.e. letting $N_m=N_{m+1}=N_{m+2}$ for sufficiently many steps. Then,
\begin{align*}
&\sum\limits_{m=1}^\infty(R(N_{m+1})+1)\left[\exp\left(-C(\lambda)\sum\limits_{j=1}^m |\widetilde{q}_j|\right)\right]\\
&\leq \sum\limits_{m=1}^\infty(1+\sqrt{m})\left[\exp\left(-C(\lambda)\sum\limits_{j=1}^m \frac{L_j}{j}\right)\right]\\
&\leq \widetilde{H}_s(\lambda) \sum\limits_{m=s}^\infty (1+\sqrt{m}) m^{-C(\lambda)L_s}\leq \widetilde{\widetilde{H}}_s(\lambda)\sum\limits_{m=1}^\infty (1+\sqrt{m}) m^{-2}
\end{align*} since $C(\lambda)L_s>2$ for $s$ sufficiently large and where $\widetilde{H}_s(\lambda)$ and $\widetilde{\widetilde{H}}_s(\lambda)$ are functions of $\lambda$. This sum is convergent and therefore \eqref{4.12} is convergent, meaning $\lambda$ is an embedded eigenvalue.

Now let the sequence $K(n)$ be as stated in the theorem, with $K(n)\rightarrow\infty$ as $n\rightarrow\infty$. It still remains to check that $|q_n|\leq \frac{K(n)}{n}$, for all $n$. Without loss of generality we may replace the sequence $K(n)$ by another sequence (again denoted by $K(n)$ to avoid complicated notation) which has the property that $K(n)\rightarrow\infty$ as slowly as we need for the proof and that $K(n)$ is monotonically increasing. Note that the explicit condition on the `new' $K(n)$ appears in the step-by-step construction of the potential. The only thing we need is that the new $K(n)$ is subordinated to the $K(n)$ given in the theorem. Recall $|\widetilde{q}_n|=\frac{L_n}{n}$ and let $L_n:=\left(K(\widetilde{n})\right)^{\frac{1}{3}}$, where $\widetilde{n}$ is the position of the $n$-th non-zero entry in the potential. Assuming the sequence $K(n) \rightarrow\infty$ sufficiently slowly we can guarantee that the sequence $\frac{L_n}{n}$ is square-summable. Moreover, $$\widetilde{n} = 1-T + \sum\limits_{k=1}^n T(r_k + 1)\leq 2T\sum\limits_{k=1}^n R\left(N_k\right),$$ where we recall that $r_i$ denotes the actual number of applications of the monodromy matrix producing `rotations' (as opposed to the upper bound) required to move the initial components into an acceptable region of the plane for the $i$-th shrinkage after the $(i-1)$-th shrinkage.

By the monotonicity of $K(n)$,
\begin{equation}\label{4.14}|\widetilde{q}_n| = \frac{L_n}{n}=\frac{(K(\widetilde{n}))^{\frac{1}{3}}}{n}\leq \frac{\left(K\left(2T\sum\limits_{k=1}^nR(N_k)\right)\right)^{\frac{1}{3}}}{n}.\end{equation} Furthermore, we can, without loss of generality, assume $\frac{K(n)}{n}$ is monotonically decreasing. It is easy to see that for $K(n)$ increasing sufficiently slowly we may combine this condition with monotonically increasing $K(n)$. Therefore,\begin{equation}\label{4.13}\frac{K(\widetilde{n})}{\widetilde{n}}\geq \frac{K\left(2T\sum\limits_{k=1}^nR(N_k)\right)}{2T\sum\limits_{k=1}^nR(N_k)}.\end{equation} Using \eqref{4.14} and \eqref{4.13}, we see that the only non-zero values of the potential $|q_{\widetilde{n}}|=|\widetilde{q}_n|\leq \frac{K(\widetilde{n})}{\widetilde{n}}$ for $n\gg 1$ if \begin{equation}\label{4.15}
 \frac{\left(K\left(2T\sum\limits_{k=1}^nR(N_k)\right)\right)^{\frac{1}{3}}}{n} \leq \frac{K\left(2T\sum\limits_{k=1}^nR(N_k)\right)}{2T\sum\limits_{k=1}^nR(N_k)}.\end{equation}It should be stressed that the condition $|q_{\widetilde{n}}|\leq \frac{K(\widetilde{n})}{\widetilde{n}}$ would be enough to guarantee the whole potential estimate claimed in the theorem.

To establish when \eqref{4.15} is satisfied we see that if $N_k$ grows slowly enough then we have that $2TR(N_m) < \left(K(m)\right)^{\frac{2}{3}}$ for $m \gg 1$ so that $$2T\sum\limits_{j=1}^n R(N_j)<\sum\limits_{j=1}^n \left(K(j)\right)^{\frac{2}{3}}\leq n \left(K(n)\right)^{\frac{2}{3}}\leq n \left(K\left(2T \sum\limits_{j=1}^n R(N_j) \right)\right)^\frac{2}{3} $$ for $n\gg1$. By substituting this into the denominator on the right-hand side of \eqref{4.15} we see that the inequality is satisfied, for $n\gg 1$, and thus there is some constant $M$ such that for all $n\geq M$ the result $|q_n|\leq \frac{K(n)}{n}$ applies. We set $q_n\equiv 0$ for all $n<M$ and then construct the potential as explained in the text to obtain the result for all $n$. Note that our step-by-step construction of the potential will not be seriously affected by this correction of the potential on a finite interval. Although the potential slightly changes even after this finite interval (due to the change of the `rotation' intervals) one can easily see that all the estimates from above are preserved.

\end{proof}

\section{Example: Infinitely many eigenvalues with one or two exceptional values.}\label{sec3}

First we explore how to embed infinitely many eigenvalues (any finite selection of which have quasi-momenta that are rationally independent with each other and $\pi$) simultaneously with a single eigenvalue, $\lambda_1$, whose quasi-momentum, $\theta(\lambda_1)$, is specified to be rationally dependent with $\pi$, but not equal to $\frac{\pi}{2}$.

The following lemma will be needed.

\begin{lemma}\label{5.52}
Let $\theta(\lambda_1)=\frac{p\pi}{q}$ where $p,q\in\mathbb{N}$ and $\gcd(p,q)=1$, and let $$\{\pi, \theta(\lambda_2),\theta(\lambda_3),\dots,\theta(\lambda_n)\}$$ be rationally independent. Then for any collection of non-zero real vectors $\{\vec{f}_1,\dots,\vec{f}_n\}$ there exists $t\in\mathbb{N}$ such that the collection $$\left(R(t\theta(\lambda_1))\vec{f}_1,\dots,R(t\theta(\lambda_n))\vec{f}_n\right)\in A_\epsilon,$$ where $A_\epsilon$ is as defined in Definition~\ref{5.10}. In particular, the vectors, $\vec{f}_1,\dots, \vec{f}_n$ can be simultaneously rotated into the same quadrant or diametrically opposite ones.
\end{lemma}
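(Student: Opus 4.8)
The plan is to combine the two rotation tools already available: Lemma~\ref{5.5}, which controls a rotation by a rational angle $\tfrac{p\pi}{q}\neq\tfrac{\pi}{2}$, and Proposition~\ref{5.14}, which handles finitely many rotations by angles rationally independent with $\pi$. The key observation is that the bad set $S_\epsilon$ (after standardization, a single configuration of four orthogonal cones shared by all $\lambda_i$) is invariant under rotation by $\tfrac{\pi}{2}$, and hence under rotation by $\pi$; more to the point, for the rational angle $\theta(\lambda_1)=\tfrac{p\pi}{q}$ the vector $R(kq\theta(\lambda_1))\vec f_1=R(kp\pi)\vec f_1=\pm\vec f_1$, so rotating by a multiple of $q$ leaves $\vec f_1$ either fixed or flipped to the diametrically opposite position. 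Thus $R(tq\theta(\lambda_1))\vec f_1$ always lies in the \emph{same pair} of quadrants (mod $2$) as $\vec f_1$ itself; in particular its quadrant class mod $2$ never changes as $t$ varies over multiples of $q$.

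First I would fix $\vec f_1$: by Lemma~\ref{5.5} there is some $k_1\in\{0,1,2\}$ with $R(k_1\theta(\lambda_1))\vec f_1\notin S_\epsilon$ (shrinking $\epsilon$ if necessary, using the remark that only the central axes are truly forbidden). Replace $\vec f_1$ by this rotated vector and replace each $\vec f_i$ ($i\geq2$) by $R(k_1\theta(\lambda_i))\vec f_i$; since $k_1$ is a fixed integer this is harmless. Now I want the final common rotation angle to be a multiple of $q$ when applied to $\vec f_1$, so I look for $t$ of the form $t=k_1+q\ell$. Applying $R(q\ell\theta(\lambda_1))=R(p\ell\pi)=(\pm I)^\ell$ to $\vec f_1$ keeps it outside $S_\epsilon$ and in a fixed quadrant-pair mod $2$ — call that class $c_1$. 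It remains to choose $\ell$ so that $R(q\ell\,\theta(\lambda_i))\vec f_i\notin S_\epsilon$ for all $i\geq2$ and lands in quadrant class $c_1$ mod $2$.

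This last step is where Proposition~\ref{5.14} enters, and it is the main technical point. The angles $q\theta(\lambda_2),\dots,q\theta(\lambda_n)$ together with $\pi$ are still rationally independent (multiplying by the integer $q$ does not destroy rational independence with $\pi$). I apply Proposition~\ref{5.14} with $N=n-1$, with starting vectors $\vec f_2,\dots,\vec f_n$, and with target vectors $\pvec e_2',\dots,\pvec e_n'$ chosen to be unit vectors sitting squarely in the interior of the quadrant of class $c_1$ (say the bisector of that quadrant), and with $\gamma$ small enough that a $\gamma$-neighbourhood of each target stays inside that quadrant and outside $S_\epsilon$. The proposition yields $m\in\mathbb N$ with the angle between $R^m(q\theta(\lambda_i))\vec f_i$ and $\pvec e_i'$ less than $\gamma$ simultaneously for all $i\geq2$; set $\ell=m$ and $t=k_1+qm$. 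Then $R(t\theta(\lambda_i))\vec f_i\notin S_\epsilon$ for all $i$, and all of these vectors lie in quadrants of the common class $c_1$ mod $2$, which is exactly the condition $(R(t\theta(\lambda_1))\vec f_1,\dots,R(t\theta(\lambda_n))\vec f_n)\in A_\epsilon$ from Definition~\ref{5.10}.

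The obstacle to watch is the interplay between the two constraints on $\vec f_1$: we cannot freely steer $\vec f_1$ with Proposition~\ref{5.14} (its angle is rational), so the argument must be arranged so that $\vec f_1$ is parked once, in a quadrant of a \emph{prescribed} class, and then only touched by rotations that preserve that class — which is precisely why the common rotation count is forced to be $k_1$ plus a multiple of $q$, and why $\theta(\lambda_1)\neq\tfrac{\pi}{2}$ (equivalently $q\neq2$, ensuring $R(q\ell\theta(\lambda_1))$ is $\pm I$ rather than a genuine $90^\circ$ turn) is indispensable. One should also note that the bound $m\leq R(n-1,\gamma)$ from Proposition~\ref{5.14} gives $t\leq k_1+qR(n-1,\gamma)$, a uniform bound depending only on $n$, $q$ and $\epsilon$, which is what later estimates on the rotation intervals will require.
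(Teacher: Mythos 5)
Your proof is correct and follows essentially the same route as the paper: park $\vec f_1$ outside $S_\epsilon$ via Lemma~\ref{5.5}, then rescale the remaining angles by a multiple of $q$ so that subsequent rotations preserve $\vec f_1$'s quadrant class, and finish with Proposition~\ref{5.14} applied to the rescaled (still rationally independent) angles. The paper uses period $2q$ (so $\vec f_1$ is literally fixed) where you use $q$ (so $\vec f_1$ may flip to the diametrically opposite quadrant), which is equally valid since $A_\epsilon$ identifies opposite quadrants; the one slip is your parenthetical that $q\neq 2$ is what makes $R(q\ell\theta(\lambda_1))=\pm I$ — in fact $R(p\ell\pi)$ is always $\pm I$, and the hypothesis $\theta(\lambda_1)\neq\frac{\pi}{2}$ is needed only for the initial appeal to Lemma~\ref{5.5}.
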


\begin{proof}
First consider $\theta(\lambda_1)$. By Lemma~\ref{5.5} there exists a number, $k$, such that the new vector $R(k\theta(\lambda_1))\vec{f}_1$ is in $\mathbb{R}^2\setminus S_\epsilon$. Now, since the angle, $\theta(\lambda_1)$, associated with this vector is of the form $\frac{p\pi}{q}$, every subsequent $2q$ rotations will return us to the same point in the plane. Thus, create new angles, $\widetilde{\theta}(\lambda_i):=2q\theta(\lambda_i)$ for all $i\in\{2,\dots,n\}$. These new angles are still rationally independent, and consequently Proposition~\ref{5.14} can be applied to give an upper bound, $r$, on the number of rotations necessary to move all the $R(k\widetilde{\theta}(\lambda_2))\vec{f}_2, \dots,R(k\widetilde{\theta}(\lambda_n))\vec{f}_n$ into the same quadrant where $R(k\theta(\lambda_1))\vec{f}_1$ resides. Note that the purpose of these new angles is to ensure that every new step is $2q$ old steps, implying that the vector $R(k\theta(\lambda_1))\vec{f}_1$ remains fixed under subsequent rotations. The upper bound for the number of rotations is $k+2qr$. Thus, for some $t\leq k+2qr,$ we have $\left(R(t\theta(\lambda_1))\vec{f}_1,\dots,R(t\theta(\lambda_n))\vec{f}_n\right)\in A_\epsilon$.~\qedhere
\end{proof}

\begin{theorem}\label{5.60}
Let $J_T$ be an arbitrary period-$T$ Jacobi operator and $\{\lambda_i\}_{i=2}^\infty$ a sequence of numbers belonging to $\sigma_{ell}(J_T)$ where any collection $$\{\pi,\theta(\lambda_{2}),\theta(\lambda_{3}),\dots,\theta(\lambda_{n})\},~~~n\geq 2,$$ is rationally independent and $\lambda_1\in\sigma_{ell}(J_T)$ such that $\theta(\lambda)=\frac{p\pi}{q}\neq\frac{\pi}{2}$ with $p,q\in\mathbb{N},\gcd(p,q)=1$. Then for any positive sequence $K(n)$ with $K(n)\rightarrow\infty$ arbitrarily slowly as $n\rightarrow\infty$, there exists a potential $|q_n|\leq \frac{K(n)}{n}$ for all $n$, and non-zero vectors $(u_{n,i})_{n\geq1}\in l^2(\mathbb{N};\mathbb{R})$ such that $$(J_T+Q)(u_{n,i})_{n\geq1}=\lambda_i (u_{n,i})_{n\geq 1}$$ for all $i$ and where $Q$ is an infinite diagonal matrix with entries $(q_n)$.
\end{theorem}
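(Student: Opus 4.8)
The plan is to mimic the architecture of the proof of Theorem~\ref{5.45}, replacing the sole use of Proposition~\ref{5.14} at each finite stage by Lemma~\ref{5.52}, which is exactly the ``rotation'' ingredient tailored to the situation where one distinguished quasi-momentum $\theta(\lambda_1)=\frac{p\pi}{q}$ is rationally dependent with $\pi$ while the remaining finitely many are rationally independent with each other and $\pi$. First I would exhaust the eigenvalue set by an increasing chain of finite subsets $N_1\subset N_2\subset\cdots$ with $\bigcup_k N_k=\{\lambda_i\}_{i=1}^\infty$, arranging that $\lambda_1\in N_1$ (so the exceptional value is present at every stage). At each stage $k$ we deal with the finitely many eigenvalues in $N_k$: by Lemma~\ref{5.52} there is a uniform bound $R(N_k)$ on the number of applications of the monodromy matrices needed to rotate \emph{all} the relevant initial-component vectors $\vec f_i$ simultaneously into the acceptable region $A_\epsilon$ (same quadrant or diametrically opposite, consistently with the required sign of $\widetilde q_k$), after which a single perturbed monodromy matrix with an appropriately signed $\widetilde q_k$ shrinks every component by the factor $1-C(\lambda_i)|\widetilde q_k|+O(\widetilde q_k^2)$ coming from \eqref{1.9}. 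Here I would invoke the standardization discussion preceding Definition~\ref{5.10}: the cones $S_\epsilon^{(i)}$ for the different $\lambda_i$ can be rotated to a common $S_\epsilon$, so that landing in a common quadrant (mod $2$) genuinely forces simultaneous shrinkage of all candidate eigenvectors.

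Next I would run the same $l^2$ estimate as in Theorem~\ref{5.45}. For a fixed $\lambda\in\{\lambda_i\}_{i=1}^\infty$, say $\lambda$ first enters the chain at $N_k$, the squared norm of its candidate eigenvector splits, exactly as in \eqref{4.12}, into a finite initial contribution $A_k(\lambda)$, a ``complete-monodromy'' sum $D_k(\lambda)\sum_{m\ge k}\big[\prod_{j=k}^m(1-C(\lambda)|\widetilde q_j|)\big](R(N_{m+1})+1)$, and an ``incomplete-block'' sum $F_k(\lambda)\sum_{t\ge k}\prod_{s=k}^{t-1}(1-C(\lambda)|\widetilde q_s|)$, using uniform boundedness of the $B_s(\lambda)$ and $B_1(\lambda-\widetilde q_t)$ for the latter. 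Setting $|\widetilde q_m|=\frac{L_m}{m}$ with $L_m\to\infty$ slowly and $\frac{L_m}{m}$ square-summable, the product telescopes into $\exp(-C(\lambda)\sum_{j=1}^m|\widetilde q_j|)(1+o(1))\asymp m^{-C(\lambda)L_s}$ on tails, which for $s$ large beats the polynomial growth of $R(N_m)$ — and here, as in Theorem~\ref{5.45}, I would arrange by repeating each set $N_i$ sufficiently often that $R(N_m)=O(\sqrt m)$, so that $\sum_m(1+\sqrt m)m^{-2}<\infty$ clinches convergence of \eqref{4.12}, hence $\underline u(\lambda)\in l^2(\mathbb N;\mathbb R)$.

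Finally I would verify the potential bound $|q_n|\le\frac{K(n)}{n}$ for all $n$, verbatim as in Theorem~\ref{5.45}: pass to a monotone, slowly-diverging subordinate sequence (still called $K(n)$), set $L_n:=(K(\widetilde n))^{1/3}$ where $\widetilde n$ is the position of the $n$-th nonzero potential entry, note $\widetilde n=1-T+\sum_{k=1}^n T(r_k+1)\le 2T\sum_{k=1}^n R(N_k)$, and use the monotonicity of $K(n)$ and of $\frac{K(n)}{n}$ together with the slow growth $2TR(N_m)<(K(m))^{2/3}$ to obtain \eqref{4.15}; this gives $|q_n|\le\frac{K(n)}{n}$ for $n\ge M$, and one zeroes out the potential on the finite initial segment $n<M$ without damaging the estimates. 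The only genuinely new point compared with Theorem~\ref{5.45} is the rotation step, and the main obstacle there is precisely what Lemma~\ref{5.52} handles: because $\theta(\lambda_1)$ is periodic (period $2q$ in rotation count), one must freeze $R(k\theta(\lambda_1))\vec f_1$ once it is in $\mathbb R^2\setminus S_\epsilon$ (possible by Lemma~\ref{5.5}, using $\theta(\lambda_1)\ne\frac\pi2$) and then rotate the remaining vectors only in blocks of $2q$ steps, i.e.\ by the still-rationally-independent angles $\widetilde\theta(\lambda_i)=2q\theta(\lambda_i)$, applying Proposition~\ref{5.14} to those — so the uniform bound $R(N_k)$ exists, and the rest of the machinery of Theorem~\ref{5.45} goes through unchanged.
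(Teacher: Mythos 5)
Your proposal matches the paper's own proof: the paper likewise places $\lambda_1$ in $N_1$, substitutes Lemma~\ref{5.52} for Proposition~\ref{5.14} at each finite stage, and then repeats the $l^2$ and potential-bound estimates from Theorem~\ref{5.45} verbatim. Your write-up is in fact somewhat more explicit than the paper's, which simply refers back to the earlier argument.
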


 \begin{proof}
We wish to show that an arbitrary eigenvector, $\underline{u}$, belongs to the sequence space $l^2(\mathbb{N};\mathbb{R})$. Moreover, it is required that $\lambda_1$ appear in the set $N_1$, where $\bigcup_{i=1}^\infty N_i=\{\lambda_i\}_{i=1}^\infty$ and $N_{i}\subseteq N_{i+1}$ as before. This means that at every stage we are dealing with embedding one eigenvalue with a quasi-momentum rationally dependent with $\pi$, along with finitely many others eigenvalues whose quasi-momenta are rationally independent with each other and $\pi$. Lemma~\ref{5.52} is now invoked instead of the Proposition~\ref{5.14} to ensure that all relevant vectors associated to the elements of the set $N_i$ are simultaneously rotated into shrinkable areas of the domain. Thus, we construct a candidate eigenvector solution whose norm can be estimated by
\begin{multline}
\|\underline{u}(\lambda)\|^2\leq A_k(\lambda)+D_k\sum\limits_{m=k}^\infty\left[ \prod\limits_{j=k}^m (1-C(\lambda)|\widetilde{q}_j|)\right](R(N_{m+1})+1)\nonumber\\
+F_k(\lambda)\sum\limits_{t=1}^\infty\left[\prod\limits_{s=k}^{t-1}(1-C(\lambda)|\widetilde{q}_s|)\right].
\end{multline}
We bound the growth of $R(N_{k})=O\left(\sqrt{k}\right)$ by increasing the sets $N_k$ sufficiently slowly, and then take logarithms and exponentials as in Section~\ref{sec1} to show that the candidate eigenvector belongs to the sequence space $l^2(\mathbb{N};\mathbb{R})$.~\qedhere
\end{proof}

We now discuss the case of embedding two eigenvalues, $\lambda_1,\lambda_2$, first where $0<\theta_1<\theta_2<\frac{\pi}{2}$ or $\frac{\pi}{2}<\theta_1<\theta_2<\pi$, and then where $0<\theta(\lambda_1)<\frac{\pi}{2}<\theta_2(\lambda_2)<\pi$.

\begin{lemma}\label{5.43}
Let $0<\theta_1<\theta_2<\frac{\pi}{2}$ or $\frac{\pi}{2}<\theta_1<\theta_2<\pi$. Then, for any pair of non-zero real vectors $\{\vec{f}_1,\vec{f}_2\}$ in the plane, there exists $k$ such that the collection $$\left(R(k\theta_1)\vec{f}_1,R(k\theta_2)\vec{f}_2\right)\in A_\epsilon.$$ In particular, the vectors, $\vec{f}_1,\vec{f}_2$ can be simultaneously rotated into the same quadrant or diametrically opposite ones.
\end{lemma}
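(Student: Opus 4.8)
The plan is to reduce Lemma~\ref{5.43} to the single-vector rotation situation already handled in Lemma~\ref{5.5}, exploiting the crucial structural fact that $S_\epsilon$ consists of four \emph{orthogonal} cones, so that membership in $A_\epsilon$ only records a residue mod $2$ of the quadrant occupied. First I would observe that it suffices to produce a $k$ with $R(k\theta_1)\vec f_1\notin S_\epsilon$, $R(k\theta_2)\vec f_2\notin S_\epsilon$, and the two resulting quadrants congruent mod $2$. Write $\theta_1=\frac{p_1\pi}{q_1}$ or, more generally, keep $\theta_1,\theta_2$ arbitrary in the stated ranges; the key hypothesis $0<\theta_1<\theta_2<\frac\pi2$ (or the mirror-image range) means that \emph{both} rotations advance a vector by an angle strictly between $0$ and $\frac\pi2$, i.e. by strictly less than one quadrant per step, and with $\theta_1<\theta_2$ the slower rotation $R(k\theta_1)$ never overtakes $R(k\theta_2)$ within a single step.

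Next I would run the argument of Lemma~\ref{5.5} in parallel for the two vectors. Choosing $\epsilon$ small enough (smaller than a fixed fraction of $\theta_1$, hence of $\theta_2$), after at most a bounded number of rotations each $\vec f_i$ leaves its current bad cone; the content of Lemma~\ref{5.5}, applied to each $\theta_i$ separately, is that within at most two rotations the vector sits outside $S_\epsilon$ and, moreover, cannot have jumped into the diametrically opposite cone. The new ingredient is the \emph{compatibility} clause: I need the two vectors to land in quadrants of the same parity simultaneously. Here I would argue by a pigeonhole/continuity-of-discrete-orbit type observation: since each rotation moves $\vec f_1$ by less than $\frac\pi2$ and $\vec f_2$ by less than $\frac\pi2$, and $\theta_1<\theta_2$, the quadrant indices $Q(R(k\theta_1)\vec f_1)$ and $Q(R(k\theta_2)\vec f_2)$ each increase by at most one per step (with the usual wrap-around mod $4$); so the difference of their parities changes by at most one per step and must hit $0$ — and when it does, one checks, using $\epsilon<$ (a fixed bound) and the strict inequalities $\theta_1<\theta_2<\frac\pi2$, that both vectors can be taken strictly inside (not on the boundary of) an admissible quadrant, hence outside $S_\epsilon$. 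A cleaner route, which I would probably take in the write-up, is: first pick $k_0$ by Lemma~\ref{5.5} (applied to $\theta_1$) with $R(k_0\theta_1)\vec f_1\notin S_\epsilon$, in quadrant number $a$; then continue rotating — because $0<\theta_1<\frac{\pi}{2}$, among $k_0, k_0+1, k_0+2, k_0+3$ the image $R(k\theta_1)\vec f_1$ visits every parity class of quadrant while remaining (for $\epsilon$ small) outside $S_\epsilon$ except possibly at boundary steps — and simultaneously track $R(k\theta_2)\vec f_2$; the combined state $(Q(R(k\theta_1)\vec f_1)\bmod 2,\ Q(R(k\theta_2)\vec f_2)\bmod 2)$ takes values in a $2$-element set once we fix what "compatible" means, and since both coordinates advance slowly we cannot skip the good configuration.

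The main obstacle I anticipate is precisely this bookkeeping of quadrant parities under two different rotation rates: one must rule out the degenerate scenario in which, at every step, exactly one of the two vectors sits on a cone axis or flips parity "out of phase" with the other forever. The hypotheses $\theta_1<\theta_2$ and $\theta_2<\frac\pi2$ (and the symmetric range) are exactly what forbids this — they guarantee $\theta_2-\theta_1<\frac\pi2$ and $\theta_1+\theta_2<\pi$, so the relative angle between the two rotated vectors drifts slowly and monotonically, and the parity-difference $|Q(R(k\theta_1)\vec f_1)-Q(R(k\theta_2)\vec f_2)|\bmod 2$ cannot remain pinned at $1$. Once this is set up carefully, the bound on $k$ is effective (of order $1/\min(\theta_1,\pi-\theta_2)$ times a constant), which is all that is needed downstream; and, as in the remark following Lemma~\ref{5.5}, since only the cone axes are genuinely forbidden one shrinks $\epsilon$ at the end to absorb any near-boundary step. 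I would then conclude that the required $k$ exists, establishing $\left(R(k\theta_1)\vec f_1, R(k\theta_2)\vec f_2\right)\in A_\epsilon$.
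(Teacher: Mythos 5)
Your core mechanism is the same as the paper's: since $\theta_2>\theta_1$, the relative angle between the two rotated vectors drifts monotonically by $\theta_2-\theta_1$ per step, so $R(k\theta_2)\vec f_2$ eventually ``overtakes'' $R(k\theta_1)\vec f_1$, and the size restrictions on the $\theta_i$ are what guarantee compatibility at that moment. However, the one step that carries the actual content of the lemma is exactly the one you leave as ``must hit $0$'' and ``one checks'': at the overtaking step the two vectors are within $\theta_2-\theta_1<\frac{\pi}{2}$ of each other, which by itself does \emph{not} put them in the same quadrant --- they could straddle a cone axis (e.g.\ sit at $89^\circ$ and $91^\circ$). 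The paper closes this by tracking the separating axis: just before overtaking, $\vec f_2$ lies behind the axis $b$ separating it from $\vec f_1$; after one more rotation $\vec f_1$ (already past $b$) remains past $b$, while $\vec f_2$ advances by $\theta_2<\frac{\pi}{2}$ from a point behind $b$ and therefore cannot reach the next axis at $b+\frac{\pi}{2}$, so both land strictly inside the same quadrant, and shrinking $\epsilon$ then keeps them outside $S_\epsilon$. Your parity bookkeeping can be completed (the running difference of quadrant-crossing counts tends to infinity and changes by at most one per step, hence assumes both parities), but it does not by itself show that at such a step both vectors sit in bona fide quadrants rather than inside cones; you still need the geometric check above.

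Two smaller points. The appeal to Lemma~\ref{5.5} is out of place: that lemma requires $\theta\in\pi\mathbb{Q}$, whereas here the $\theta_i$ are arbitrary in the stated ranges, and nothing in the overtaking argument needs it. And in the range $\frac{\pi}{2}<\theta_1<\theta_2<\pi$ your premise ``at most one quadrant per step'' is false; you must either first reduce to the first case by replacing $\theta_i$ with $\theta_i-\pi$ (legitimate because $A_\epsilon$ identifies diametrically opposite quadrants), or argue as the paper does that the ``overshoot'' lands $\vec f_2$ in the diametrically opposite, hence still compatible, quadrant, which is where the hypothesis $\theta_2<\pi$ is used.
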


\begin{proof}
First consider the case when $0<\theta_1<\theta_2<\frac{\pi}{2}$. Since $\theta_2>\theta_1$ then eventually there will be an `overtaking'. This means that the vector $\vec{f}_2$ will overtake the vector $\vec{f}_1$ at some stage. In the step preceding this overtaking the two vectors $\vec{f}_1,\vec{f}_2$ are either in the same quadrant or not. If they're in the same quadrant, then the objective is already achieved. If they're not then the vector $\vec{f}_2$ must be in the quadrant `behind' the quadrant $\vec{f}_1$ is in or in the cone; however after one more rotation the vector $\vec{f}_2$ overtakes $\vec{f}_1$, and since $\theta_2<\frac{\pi}{2}$ the two vectors must then be in the same quadrant (see Figure~\ref{fig54} for more details) and outside $S_\epsilon$ for sufficiently small $\epsilon$.

When $\frac{\pi}{2}<\theta_1<\theta_2<\pi$ the argument is similar. Again, the only bad situation is when $\vec{f}_2$ is immediately `behind' $\vec{f}_1$. Then, in the next step when the `overtaking' happens the vector $\vec{f}_1$ must move into the next quadrant, where it is joined by the vector $\vec{f}_2$. It is not possible for the vector $\vec{f}_2$ to `overshoot' and land in the inconsistent quadrant beyond $\vec{f}_1$ as this would require $\vec{f}_2$ to move more than two whole quadrants in one rotation, and we already assume $\theta_2<\pi$ (see Figure~\ref{fig56} for more details).~\qedhere
\end{proof}

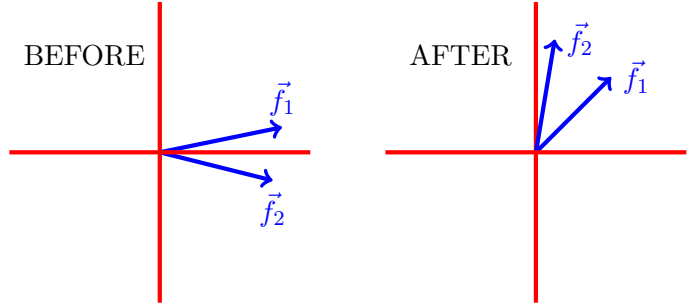
\begin{figure}
\centering
\begin{tikzpicture}
        \draw [ultra thick, red, -] (0,-2) -- (0,2)
        node[above, black] at (-1,1) {BEFORE};
         \draw[ultra thick, blue, ->] (0,0) -- (1+2/4,-3/8)
         node[below, blue] at (1+2/4,-3/8) {$\vec{f}_2$};
         \draw[ultra thick, blue, ->] (0,0)-- (1+5/8,1/3)
         node[above, blue] at (1+5/8,1/3) {$\vec{f}_1$};
    \draw [ultra thick, red, -] (-2,0) -- (2,0);

    \draw [ultra thick, red, -] (5,-2) -- (5,2)
    node[above, black] at (4,1) {AFTER};
    \draw [ultra thick, blue, ->] (5,0) -- (5+1/4,1+1/2)
    node[right, blue] at (5+1/4,1+1/2) {$\vec{f}_2$};
     \draw [ultra thick, blue, ->] (5,0) -- (6,1)
      node[right, blue] at (6,1) {$\vec{f}_1$};
   \draw [ultra thick, red, -] (3,0) -- (7,0);      % draw x-axis line

\end{tikzpicture}
\caption{The diagram illustrates one particular example of `overtaking', specifically when $0<\theta_1<\theta_2<\frac{\pi}{2}$ and $\vec{f}_1,\vec{f}_2$  do not inhabit the same quadrant before the overtaking. The two images denote the location of the vectors before and after the threshold rotation has been applied. As can be seen, since $\theta_2<\frac{\pi}{2}$ there is no threat of $\vec{f}_2$ overshooting, and landing in the next (inconsistent) quadrant instead. Note that the thick  axes denote the arbitrarily narrow bad cones to be avoided. }
\label{fig54}
\end{figure}

\begin{figure}
\centering
\begin{tikzpicture}
        \draw [ultra thick, red, -] (0,-2) -- (0,2)
        node[above, black] at (-1,1) {BEFORE};
         \draw[ultra thick, blue, ->] (0,0) -- (1+2/4,-3/8)
         node[below, blue] at (1+2/4,-3/8) {$\vec{f}_2$};
         \draw[ultra thick, blue, ->] (0,0)-- (1+5/8,1/3)
         node[above, blue] at (1+5/8,1/3) {$\vec{f}_1$};
    \draw [ultra thick, red, -] (-2,0) -- (2,0);

    \draw [ultra thick, red, -] (5,-2) -- (5,2)
    node[above, black] at (6,1) {AFTER};
    \draw [ultra thick, blue, ->] (5,0) -- (3+7/8,3/4)
    node[left, blue] at (3+3/4,1/2){$\vec{f}_2$};
     \draw [ultra thick, blue, ->] (5,0) -- (4+1/3,1)
      node[above, blue] at (4+1/3,1) {$\vec{f}_1$};
   \draw [ultra thick, red, -] (3,0) -- (7,0);      % draw x-axis line

\end{tikzpicture}
\caption{The diagram illustrates another example of `overtaking', specifically when $\frac{\pi}{2}<\theta_1<\theta_2<\pi$ and $\vec{f}_1,\vec{f}_2$ do not inhabit the same quadrant before the overtaking. The two images denote the location of the vectors before and after the threshold rotation has been applied. As can be seen, since $\frac{\pi}{2}<\theta_2$ the vector $\vec{f}_2$ lands in the next diametrically opposite quadrant. Note that the thick axes denote the arbitrarily narrow bad cones to be avoided. }
\label{fig56}
\end{figure}
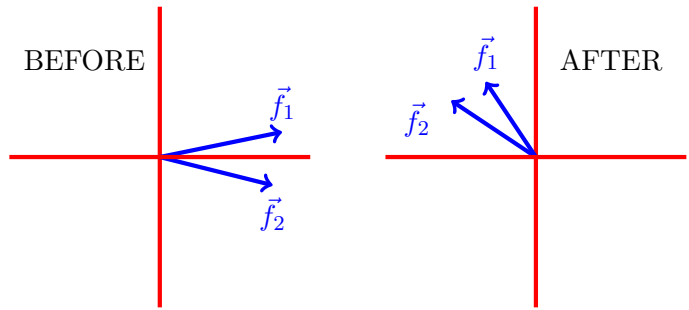

\begin{theorem}\label{5.57}
Let $J_T$ be an arbitrary period-$T$ Jacobi operator and $\{\lambda_i\}_{i=3}^\infty$ a sequence of complex numbers belonging to $\sigma_{ell}(J_T)$ where any collection $$\{\pi,\theta(\lambda_3),\theta(\lambda_{4}),\dots,\theta(\lambda_{n})\},~~n\geq 3$$ is rationally independent, and $\{\lambda_1,\lambda_2\}\subseteq\sigma_{ell}(J_T)$ where either $0<\theta(\lambda_1)<\theta(\lambda_2)<\frac{\pi}{2}$ or $\frac{\pi}{2}<\theta(\lambda_1)<\theta(\lambda_2)<\pi$. Then for any positive sequence $K(n)$ with $K(n)\rightarrow\infty$ arbitrarily slowly as $n\rightarrow\infty$, there exists a potential $|q_n|\leq \frac{K(n)}{n}$ for all $n$, and non-zero vectors $(u_{n,i})_{n\geq1}\in l^2(\mathbb{N};\mathbb{R})$ such that $$(J_T+Q)(u_{n,i})_{n\geq1}=\lambda_i(u_{n,i})_{n\geq1}$$ for all $i$ and where $Q$ is an infinite diagonal matrix with entries $(q_n)$.
\end{theorem}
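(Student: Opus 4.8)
The plan is to mirror the proof of Theorem~\ref{5.60}, replacing Lemma~\ref{5.52} by Lemma~\ref{5.43} at the step where the two exceptional eigenvalues $\lambda_1,\lambda_2$ are handled. As before, decompose the infinite set $\{\lambda_i\}_{i=1}^\infty$ into an increasing chain of finite subsets $N_1\subset N_2\subset\cdots$ with $\bigcup_k N_k=\{\lambda_i\}_{i=1}^\infty$, but this time insist that both $\lambda_1$ and $\lambda_2$ lie in $N_1$. At each stage $N_k$ we are then embedding the two special eigenvalues $\lambda_1,\lambda_2$ (with $0<\theta(\lambda_1)<\theta(\lambda_2)<\frac\pi2$ or $\frac\pi2<\theta(\lambda_1)<\theta(\lambda_2)<\pi$) together with finitely many eigenvalues whose quasi-momenta are rationally independent with each other and $\pi$.

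The key point is to produce, at each stage, a uniform bound $R(N_k)$ on the number of unperturbed monodromy rotations needed to bring all the relevant vectors $\vec f_i$, $\lambda_i\in N_k$, simultaneously into a consistent acceptable region $A_\epsilon$. First I would use Lemma~\ref{5.43} to rotate $\vec f_1,\vec f_2$ into the same quadrant (or diametrically opposite ones), say after $k_0$ rotations, using the `overtaking' argument; then, since the bad-cone configuration is orthogonal, every further $4$ rotations preserve membership of $\vec f_1$ in $\mathbb R^2\setminus S_\epsilon$ only if $\theta(\lambda_1)\in\frac\pi2\mathbb Z$, which is excluded, so I instead argue as in Lemma~\ref{5.52}: the issue is that $\theta(\lambda_1),\theta(\lambda_2)$ need not be rational multiples of $\pi$ here. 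One fixes this by noting that after the initial alignment of $\vec f_1,\vec f_2$ one continues rotating with the \emph{joint} rotation, and since $\theta(\lambda_1)<\theta(\lambda_2)$, the pair re-aligns into a consistent quadrant infinitely often (again by overtaking); between consecutive such re-alignments one has a window in which to apply Proposition~\ref{5.14} to the remaining rationally independent eigenvalues $\lambda_3,\dots,\lambda_n$. This yields a bound $R(N_k)$, and by repeating each set $N_k$ sufficiently often one arranges $R(N_k)=O(\sqrt k)$, exactly as in Section~\ref{sec1}.

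With the rotation bounds in hand, the $l^2$ estimate is routine: the candidate eigenvector $\underline u(\lambda)$ for an arbitrary $\lambda\in\{\lambda_i\}_{i=1}^\infty$, which first appears in $N_k$, satisfies
\begin{multline*}
\|\underline u(\lambda)\|^2\leq A_k(\lambda)+D_k(\lambda)\sum_{m=k}^\infty\left[\prod_{j=k}^m(1-C(\lambda)|\widetilde q_j|)\right](R(N_{m+1})+1)\\
+F_k(\lambda)\sum_{t=k}^\infty\left[\prod_{s=k}^{t-1}(1-C(\lambda)|\widetilde q_s|)\right],
\end{multline*}
with $\widetilde q_0:=0$. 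Setting $|\widetilde q_m|=\frac{L_m}{m}$ with $L_m\to\infty$ slowly and $(L_m/m)\in l^2$, taking logarithms and exponentials converts the product into $m^{-C(\lambda)L_s}$ for $m\geq s$, and since $R(N_{m+1})+1=O(\sqrt m)$ the series $\sum_m(1+\sqrt m)m^{-C(\lambda)L_s}$ converges once $C(\lambda)L_s>2$, i.e. for $s$ large. Hence $\underline u(\lambda)\in l^2(\mathbb N;\mathbb R)$ for every embedded $\lambda$. The potential-size condition $|q_n|\leq\frac{K(n)}{n}$ is verified verbatim as in the proof of Theorem~\ref{5.45}: replace $K(n)$ by a monotone subordinate sequence, set $L_n=(K(\widetilde n))^{1/3}$ with $\widetilde n$ the position of the $n$-th non-zero entry, use $\widetilde n\leq 2T\sum_{k=1}^n R(N_k)$ together with the monotonicity of $K(n)/n$, and choose the $N_k$ to grow slowly enough that $2TR(N_m)<(K(m))^{2/3}$, whence inequality~\eqref{4.15} holds for $n\gg1$; correcting the potential to zero on the initial finite interval does not affect the estimates.

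The main obstacle is the rotation-bound step for the pair $\lambda_1,\lambda_2$: unlike in Lemma~\ref{5.52}, the exceptional quasi-momenta here are generic reals, so one cannot rely on periodicity of the rotation to freeze $\vec f_1$ while realigning the others. The resolution is that Lemma~\ref{5.43}'s overtaking mechanism recurs periodically in the sense that consistent alignments of $(\vec f_1,\vec f_2)$ happen with bounded gaps (governed by $\theta(\lambda_2)-\theta(\lambda_1)$ and the cone width $\epsilon$), and on the subsequence of rotation-counts where $(\vec f_1,\vec f_2)$ is aligned one applies Proposition~\ref{5.14} to the rationally independent tail; combining the two bounds gives a finite $R(N_k)$ depending only on $k$ and $\epsilon$. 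Everything else is a faithful repetition of Sections~\ref{sec2} and~\ref{sec1}.
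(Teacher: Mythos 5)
Your overall architecture --- putting $\{\lambda_1,\lambda_2\}\subseteq N_1$, using Lemma~\ref{5.43} for the exceptional pair and Proposition~\ref{5.14} for the tail, then repeating the $l^2$ and potential-size estimates of Theorem~\ref{5.45} --- is the same as the paper's, and your last two paragraphs are fine. The gap sits exactly where you flag ``the main obstacle'', and your resolution of it does not work. Proposition~\ref{5.14} produces \emph{some} $m\le R(N,\gamma)$ at which the tail vectors $R^m(\theta(\lambda_i))\vec f_i$, $i\ge 3$, land near their targets; it says nothing about finding such an $m$ inside a prescribed subsequence of rotation counts. Restricting to ``the subsequence of rotation-counts where $(\vec f_1,\vec f_2)$ is aligned'' is legitimate only when that subsequence is an arithmetic progression $k_0+jt$, for then $R^{k_0+jt}(\theta_i)\vec f_i=R^{j}(t\theta_i)\bigl(R^{k_0}(\theta_i)\vec f_i\bigr)$ and one may apply the proposition with the rescaled, still rationally independent, angles $t\theta_i$. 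For ``generic real'' $\theta(\lambda_1),\theta(\lambda_2)$ the alignment set is at best syndetic, not periodic, so this substitution is unavailable; and ``bounded gaps'' plus ``a window in which to apply Proposition~\ref{5.14}'' does not yield a simultaneous alignment time, since the pair need not remain aligned for the $R(N,\gamma)$ steps the proposition needs. (The sentence about ``every further $4$ rotations'' is also not meaningful as written.)

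The paper sidesteps this by first reducing to the only genuinely new case: if at most one of $\theta(\lambda_1),\theta(\lambda_2)$ lies in $\pi\mathbb Q$, the result is already covered by Theorems~\ref{5.45} and~\ref{5.60}, so one may assume $\theta(\lambda_1)=p_1\pi/q_1$ and $\theta(\lambda_2)=p_2\pi/q_2$. In that case the alignment produced by the overtaking argument \emph{is} periodic: with $t={\rm LCM}(q_1,q_2)$, each further $t$ rotations moves $\vec f_1$ and $\vec f_2$ by integer multiples of $\pi$, hence keeps them in consistent quadrants in the sense of Definition~\ref{5.10}. One then freezes the pair and applies Proposition~\ref{5.14} to the tail with the new angles $t\theta(\lambda_i)$, exactly the mechanism of Lemma~\ref{5.52}. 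You need to add this case reduction and the ${\rm LCM}$ periodicity step; without them the simultaneous-alignment bound $R(N_k)$ --- on which everything downstream depends --- is not established.
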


\begin{proof}
For the case when only one of $\theta(\lambda_1),\theta(\lambda_2)$ is rationally dependent with $\pi$, we already have the result (see Theorem \ref{5.60}). Thus the only outstanding case is when $\theta(\lambda_1)=\frac{p_1\pi}{q_1}$ and $\theta(\lambda_2)=\frac{p_2\pi}{q_2}$ with $\gcd(p_1,q_1)=1=\gcd(p_2,q_2)$.

The argument follows similarly to the proof of Theorem~\ref{5.60} by dividing up the set of eigenvalues to be embedded into sets $N_1\subset N_2\subset N_3\subset\dots$ except now $\{\lambda_1,\lambda_2\}\subseteq N_1$. The `overtaking' argument used in Lemma~\ref{5.43} can be applied again to establish that a finite number of rotations will be enough to simultaneously manoeuvre the initial vectors in the plane corresponding to the candidate eigenvalues, $\lambda_1,\lambda_2$, into shrinkable regions. Moreover, for subsequent sets, $N_k$, it will become necessary to simultaneously rotate finitely many other initial vector components using quasi-momenta $\theta(\lambda_{n_1}),\dots,\theta(\lambda_{n_k})$, rationally independent with each other and $\pi$. Specifically, we first set about moving the vectors, $\vec{f}_1, \vec{f}_2$ corresponding to the two quasi-momenta rationally dependent with $\pi$ into shrinkable regions of the plane, just as in the $N_1$ case, with the upper bound $n_0$ on the number of rotations required. Then, observing that for every subsequent $t:={\rm LCM}(q_1,q_2)$ rotations the vectors $\vec{f}_1,\vec{f}_2$ remain in the shrinkable region of the plane, create new  quasi-momenta $\widetilde{\theta}(\lambda_i)=t\theta(\lambda_i)$ for $i\in \mathbb{N}\setminus\{1,2\}$, rationally independent with each other and $\pi$. Then, Proposition~\ref{5.14} can again be applied to the vectors $\vec{f}_i$ for $i\in I_k$, where $I_k:=\{i\in\mathbb{N}\setminus\{1,2\}:\lambda_i\in N_k\}$, to rotate them all simultaneously into the same shrinkable region as $\vec{f}_1,\vec{f}_2$ but now using the new angles $\widetilde{\theta}_i$ since this ensures that at every time a rotation is applied the vectors $\vec{f}_1,\vec{f}_2$ remain in the same shrinkable region. This technique is applicable for all sets $N_k$.

Thus, for each $\lambda_i$, we construct a candidate eigenvector solution that satisfies
\begin{multline}
\|\underline{u}(\lambda)\|^2\leq A_k(\lambda)+D_k(\lambda)\sum\limits_{m=k}^\infty\left[ \prod\limits_{j=k}^m (1-C(\lambda)|\widetilde{q}_j|)\right](R(N_{m+1})+1)\\+F_k(\lambda)\sum\limits_{t=1}^\infty\left[\prod\limits_{s=k}^{t-1}(1-C(\lambda)|\widetilde{q}_s|)\right],
\end{multline}
Again we slow the growth of $R(N_{k})=O\left(\sqrt{k}\right)$ and then take logarithms and exponentials as before to show that the candidate eigenvector belongs to the sequence space $l^2(\mathbb{N};\mathbb{R})$.~\qedhere
\end{proof}

We now continue to look  at embedding two eigenvalues, simultaneously, but this time consider the more sophisticated case when the quasi-momenta are such that  $0<\theta(\lambda_1)<\frac{\pi}{2}<\theta(\lambda_2)<\pi$.

\begin{lemma}\label{5.71}
Let $0<\theta_1<\frac{\pi}{2}<\theta_2<\pi$, but where $\pi-\theta_2\neq\theta_1$. Then for any pair of non-zero real vectors $\{\vec{f}_1,\vec{f}_2\}$ in the plane there exists $k$ such that the collection $$\left(R(k\theta_1)\vec{f}_1,R(k\theta_2)\vec{f}_2\right)\in A_{\epsilon}.$$ In particular, the vectors $\vec{f}_1,\vec{f}_2$ can be simultaneously rotated into the same quadrant or diametrically opposite ones.
\end{lemma}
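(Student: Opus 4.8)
The plan is to reduce the problem modulo $\pi$ and then adapt the ``overtaking'' argument of Lemma~\ref{5.43}, the new feature being that the two quasi-momenta lie on opposite sides of $\tfrac\pi2$. Since membership of $(\vec f_1,\vec f_2)$ in $A_\epsilon$ depends only on the quadrants of $\vec f_1$ and $\vec f_2$ modulo $2$, and antipodal vectors occupy quadrants differing by $2$, it is enough to follow the directions of $R(k\theta_1)\vec f_1$ and $R(k\theta_2)\vec f_2$ only modulo $\pi$. On the circle $\mathbb R/\pi\mathbb Z$ the four bad cones collapse onto the two marked points $0$ and $\tfrac\pi2$ (the images of the two orthogonal lines bounding the quadrants), which split the circle into two open ``good arcs'' $I_1=(0,\tfrac\pi2)$ and $I_2=(\tfrac\pi2,\pi)$ of length $\tfrac\pi2$; the pair lies in $A_\epsilon$ precisely when both projected directions lie in the \emph{same} good arc and avoid the marked points, and once such a configuration has been produced we simply shrink $\epsilon$, exactly as in the Remark following Lemma~\ref{5.5}. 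Modulo $\pi$, rotation by $\theta_2$ coincides with rotation by $\theta_2-\pi=-(\pi-\theta_2)$; hence the projection of $\vec f_1$ moves counter-clockwise by $\theta_1\in(0,\tfrac\pi2)$ while that of $\vec f_2$ moves clockwise by $\pi-\theta_2\in(0,\tfrac\pi2)$, so the two projected points approach one another, each moving less than a good-arc length per step, and their separation changes by the fixed amount $\theta_1+(\pi-\theta_2)\in(0,\pi)$ at every step.

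It is convenient to encode the joint state $\bigl(R(k\theta_1)\vec f_1,R(k\theta_2)\vec f_2\bigr)\bmod\pi$ as a point of the torus $(\mathbb R/\pi\mathbb Z)^2$ evolving under the fixed translation by $(\theta_1,\theta_2)$; the target ``both coordinates in the same good arc'' is then a non-empty open set, hit immediately whenever the orbit is dense, in particular whenever $1,\theta_1/\pi,\theta_2/\pi$ are rationally independent. In general one argues, by an elaboration of the overtaking step of Lemma~\ref{5.43} (cf.\ Figure~\ref{fig56}) that now has to keep track of \emph{both} marked points, that as the two projections close in a ``straddling'' configuration --- the two points in the two different good arcs, separated by a single marked point --- cannot persist indefinitely, so a step arrives at which both projections sit, off the marked points, in a common good arc.

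The role of the hypothesis $\pi-\theta_2\neq\theta_1$ is to exclude the one degenerate obstruction to this. If $\pi-\theta_2=\theta_1$, the two projections approach at equal rates, so the midpoint of the arc joining them is fixed by the dynamics; if, moreover, $\phi_1+\phi_2\equiv0\pmod\pi$, this midpoint is one of the marked points, the two projections are forever symmetric about it, and hence lie in \emph{different} good arcs for every $k$ --- so the lemma genuinely fails there (for instance $\theta_1=\tfrac\pi3$, $\theta_2=\tfrac{2\pi}3$, with $\vec f_1,\vec f_2$ at angles $\tfrac\pi4$ and $-\tfrac\pi4$). When $\pi-\theta_2\neq\theta_1$ the midpoint instead drifts, at each step, by the fixed nonzero amount $\tfrac12\bigl(\theta_1-(\pi-\theta_2)\bigr)$ modulo $\tfrac\pi2$, and it is this drift --- together with the steady approach of the two points --- that makes the overtaking argument go through.

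The step I expect to require the most care, and where the hypothesis is actually used, is the case in which $\theta_1/\pi$ and $\theta_2/\pi$ are rationally dependent with $\pi$ while $\pi-\theta_2\neq\theta_1$: then the mod-$\pi$ orbit is a \emph{finite} set rather than a dense one, so density is unavailable, and one must verify directly --- using the nonzero drift of the midpoint described above --- that among the finitely many configurations actually visited there is indeed one lying, off the marked points, in a common good arc, any visited point landing exactly on a marked point being harmless by the usual shrinking of $\epsilon$.
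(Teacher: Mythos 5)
Your setup is the same as the paper's: reduce modulo $\pi$ (equivalently, identify diametrically opposite quadrants), observe that rotation by $\theta_2$ then acts as the clockwise rotation by $\pi-\theta_2$, and run an ``overtaking'' argument on the two projected directions. Your counterexample for the excluded case $\pi-\theta_2=\theta_1$ (e.g.\ $\theta_1=\tfrac{\pi}{3}$, $\theta_2=\tfrac{2\pi}{3}$, initial angles $\pm\tfrac{\pi}{4}$) is correct and is a nice addition the paper does not spell out. However, there is a genuine gap at exactly the point you flag yourself: you assert that a straddling configuration ``cannot persist indefinitely'' and that in the rationally dependent case ``one must verify directly'' that a common good arc is reached, but you never carry out that verification. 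That verification is the entire content of the lemma, and the midpoint-drift heuristic does not by itself deliver it: the two projections are simultaneously approaching each other and drifting, so one must coordinate the drift with the separation, and (as your own counterexample shows) the conclusion is delicate enough that a quantitative argument is required.

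The paper supplies the missing computation, and it is uniform in $\theta_1,\theta_2$ (no case split on rationality is needed). Writing $\widetilde{\theta}_2:=\theta_2-\pi$ and taking, say, $|\widetilde{\theta}_2|<\theta_1$, one asks for an integer $k_n$ at which $\vec{f}_1$ has already crossed the $n$-th quadrant boundary while $\vec{f}_2$ has not yet reached it; this holds whenever
\begin{equation*}
1+\frac{n\pi/2}{\theta_1}\;<\;k_n\;<\;\frac{n\pi/2}{|\widetilde{\theta}_2|}-1 ,
\end{equation*}
and the length of this window is $\frac{n\pi}{2}\bigl(\frac{1}{|\widetilde{\theta}_2|}-\frac{1}{\theta_1}\bigr)-2$, which tends to infinity as $n\to\infty$ precisely because $\theta_1\neq|\widetilde{\theta}_2|=\pi-\theta_2$. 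Hence for $n$ large the window has length greater than $1$ and contains an integer, which resolves the straddling configuration whether or not the orbit is finite. To complete your proof you would need to supply this (or an equivalent) estimate; without it, the claim that the overtaking argument ``goes through'' is an assertion of the lemma rather than a proof of it.
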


\begin{proof}
The technique uses the `overtaking' argument, like Lemma~\ref{5.43}. However, due to the comparative size of the angles involved there is now the threat that before and after the overtaking happens the vectors $\vec{f}_1$ and $\vec{f}_2$ are still in inconsistent quadrants. This is the only case  that needs to be considered. Define the new angle $\widetilde{\theta}_2:=\theta_2-\pi$ and without loss of generality consider the situation when $|\widetilde{\theta}_2|<\theta_1$. (The case $|\widetilde{\theta}_2|>\theta_1$ follows a similar argument, whilst the case $\left|\widetilde{\theta}_2\right|=\theta_1$ has been excluded by the acceptable conditions of the lemma.) Since we may identify opposite quadrants we can assume $\vec{f}_2$ is rotated by $\widetilde{\theta}_2$ rather than $\theta_2$.

For the sake of simplicity we skip straight to the step before the overtaking is to happen and assume the two vectors inhabit inconsistent quadrants (otherwise there is no problem). Moreover, after the overtaking we assume the vectors reside in inconsistent quadrants (otherwise again there is no problem). By identifying consistent quadrants, without loss of generality the problematic case occurs when $\xi_1\leq 0, {\xi}_2\geq 0, {\xi}_1+\theta_1\geq 0,{\xi}_2+\theta_2\leq 0$, where $\xi_i:=\arg(f_i)$. We aim to show that since $\theta_1>|\widetilde{\theta}_2|$ there exists a $k_1\in\mathbb{N}$ such that \begin{equation}\label{5.4} {\xi}_1+k_1\theta_1>\frac{\pi}{2}~~{\rm and}~~{\xi}_2+k_1\widetilde{\theta}_2>-\frac{\pi}{2},\end{equation} i.e. after $k_1$ steps $\vec{f}_1$ has been rotated into the second quadrant, while $\vec{f}_2$ is still in the consistent fourth quadrant (see Figure~\ref{fig58} for more details). These are satisfied if $$(k_1-1){\theta}_1>\frac{\pi}{2}~{\rm and}~~k_1<-\frac{\pi}{2\widetilde{\theta_2}},$$ respectively. For this to occur, it is sufficient for $\theta_1,\widetilde{\theta_2}$ be such that
$$\frac{\pi}{2\theta_1}+1<k_1<-\frac{\pi}{2\widetilde{\theta}_2}-1.$$ This implies that
\begin{align*}
&\pi\widetilde{\theta}_2+4\theta_1\widetilde{\theta}_2>-\pi\theta_1\\
&\iff \pi\left(\theta_1+\widetilde{\theta}_2\right)>-4\theta_1\widetilde{\theta}_2
\end{align*}
Note that we subtracted $1$ from the upper-bound since not only must $k$ exist it must also be a natural number. Indeed, if $\theta_1,\widetilde{\theta}_2$ are such that the relations hold for the new (reduced) upper bound then they will also hold for the original bound and since the difference between the two upper bounds is $1$ there must be some natural number $k$ in the range.

 If these conditions aren't met then there is still the chance that the same `consistent-coinciding', happens later, just in another quadrant. This happens when both are in the lower left quadrant, i.e. if there exists $k_2\in\mathbb{N}$ such that \begin{equation}{\xi}_1+k_2\theta_1>\pi~~{\rm{and}}~~{\xi}_2+k_2\widetilde{\theta}_2>-\pi.\end{equation} These are satisfied if
 $$k_2-1>\frac{\pi}{\theta_1}~{\rm and }~~ k_2<\frac{-\pi}{\widetilde{\theta}_2},$$ respectively. Then for this to occur it is sufficient for $\theta_1,\widetilde{\theta_2}$ to be such that
 $$1+\frac{\pi}{\theta_1}<k_2<\frac{\pi}{-\widetilde{\theta}_2}-1.$$ This implies that
\begin{align*}
&2\theta_1\widetilde{\theta}_2+\pi(\theta_1+\widetilde{\theta}_2)>0\\
&\iff \pi(\theta_1+\widetilde{\theta}_2)>-2\theta_1\widetilde{\theta}_2.
\end{align*}
Note again that we have subtracted $1$ from the upper-bound.
If such a $k_2$ does not exist then we can keep repeating the process. For $k_n$ we need $\theta_1,\widetilde{\theta_2}$ such that
$$1+\frac{n(\frac{\pi}{2})}{\theta_1}<\frac{n(\frac{\pi}{2})}{-\widetilde{\theta}_2}-1 \iff \frac{n\pi}{2}(\theta_1+\widetilde{\theta}_2)>-2\theta_1\widetilde{\theta}_2.$$
Then for any $|\widetilde{\theta}_2|<\theta_1$ this condition is satisfied for any $n$ large enough. For the case $|\widetilde{\theta}_2|>\theta_1$ the same expression is obtained.~\qedhere
\end{proof}

\begin{figure}
\centering
\begin{tikzpicture}
        \draw [ultra thick, red, -] (0,-3) -- (0,3);
                \draw[ultra thick, blue, ->] (0,0) -- (3,-1/2)
         node[right, blue] at (3,-1/2) {$\xi_1$};
         \draw[ultra thick, blue, ->] (0,0) -- (3,-2.5/2)
          node[right, blue] at (3,-2.5/2) {$\xi_2+\widetilde{\theta}_2$};
         \draw[ultra thick, blue, ->] (0,0) -- (1.8,-2)
         node[right, blue] at (1.8,-2) {$\xi_2+2\widetilde{\theta}_2$};
         \draw[ultra thick, blue, ->] (0,0) -- (3/8,-2.5)
         node[right, blue] at (3/8,-2.5) {$\xi_2+k_1\widetilde{\theta}_2$};
    \draw [ultra thick, red, -] (-3,0) -- (3,0);
      \draw[ultra thick, blue, ->] (0,0) -- (3,1/8)
      node[right, blue] at (3,1/8) {$\xi_2$};
   \draw[ultra thick, blue, ->] (0,0) -- (2+1/2,3/2)
   node[right, blue] at (2+1/2,3/2) {$\xi_1+\theta_1$};
   \draw[ultra thick, blue, ->] (0,0) -- (1,3)
   node[right, blue] at (1,3) {$\xi_1+2\theta_1$};
   \draw[ultra thick, blue, ->] (0,0) -- (-1,3)
   node[left, blue] at (-1,3) {$\xi_1+k_1\theta_1$};
   \node[below, blue] at (0,3/2) {$\dots$};
   \node[right, blue] at (2/8,-3/2) {$\dots$};
\end{tikzpicture}
\caption{The diagram illustrates the possibility of the vector $\vec{f}_1$ being rotated into the the top-left quadrant whilst the vector $\vec{f}_2$ is still tarrying in the bottom-right. The expression $\xi_i+z\theta_i$ corresponds to the angle of the vector being described, whilst the thick axes are as in Figure~\ref{fig54}. }
\label{fig58}
\end{figure}
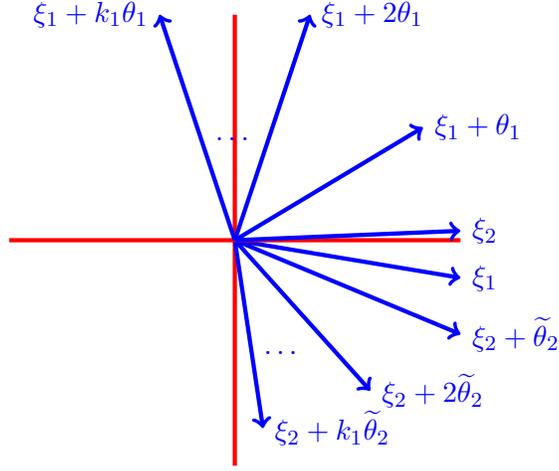

\begin{theorem}\label{5.73}
Let $J_T$ be an arbitrary period-$T$ Jacobi operator and $\{\lambda_i\}_{i=3}^\infty$ a sequence of complex numbers belonging to $\sigma_{ell}(J_T)$ where any collection $$\{\pi,\theta(\lambda_{3}),\theta(\lambda_{4}),\dots,\theta(\lambda_{n})\},~~n\geq 3$$ is rationally independent, and $\{\lambda_1,\lambda_2\}\subseteq\sigma_{ell}(J_T)$ where $0<\theta(\lambda_1)<\frac{\pi}{2}<\theta(\lambda_2)<\pi$ and $\theta(\lambda_1)\neq \pi-\theta(\lambda_2)$. Then for any positive sequence $K(n)$ with $K(n)\rightarrow\infty$ arbitrarily slowly as $n\rightarrow\infty$, there exists a potential $|q_n|\leq \frac{K(n)}{n}$ for all $n$, and non-zero vectors $(u_{n,i})_{n\geq1}\in l^2(\mathbb{N};\mathbb{R})$ such that $$(J_T+Q)(u_{n,i})_{n\geq1}=\lambda_i (u_{n,i})_{n\geq1}$$ for all $i$ and where $Q$ is an infinite diagonal matrix with entries $(q_n)$.
\end{theorem}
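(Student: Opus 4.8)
The plan is to mimic the structure of the proofs of Theorems~\ref{5.60} and \ref{5.57}, replacing the relevant rotation lemma by Lemma~\ref{5.71}. First I would split the eigenvalues to be embedded into an increasing sequence of finite subsets $N_1\subset N_2\subset N_3\subset\dots$ with $\bigcup_{k=1}^\infty N_k=\{\lambda_i\}_{i=1}^\infty$, insisting that $\{\lambda_1,\lambda_2\}\subseteq N_1$. At each stage we are then dealing with the two eigenvalues $\lambda_1,\lambda_2$ (whose quasi-momenta satisfy the hypotheses of Lemma~\ref{5.71}) together with finitely many additional eigenvalues whose quasi-momenta are rationally independent with each other and $\pi$. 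Observe that by the remark following Definition~\ref{5.10} we may standardize all the bad-cone sets $S_\epsilon^{(i)}$ to a common $S_\epsilon$, so it only remains to rotate all the relevant initial vectors simultaneously into $A_\epsilon$.

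The key step is the simultaneous rotation into $A_\epsilon$. For $N_1=\{\lambda_1,\lambda_2\}$ this is exactly Lemma~\ref{5.71}, which gives a finite number $n_0$ of rotations placing $(R(n_0\theta_1)\vec{f}_1,R(n_0\theta_2)\vec{f}_2)\in A_\epsilon$. For the subsequent sets $N_k$ I would use the standard device already employed in the proofs of Lemmas~\ref{5.52} and Theorem~\ref{5.57}: once $\vec{f}_1,\vec{f}_2$ are in a shrinkable region, one needs periodicity to keep them there while rotating the others. Here, however, $\theta_1,\theta_2$ need not be rationally dependent with $\pi$, so there is no finite period $t$ after which $R(t\theta_1)\vec{f}_1=\vec{f}_1$ exactly. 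The remedy is to note that the \emph{consistent-coinciding} configuration produced by Lemma~\ref{5.71} is not a single point but a small open set: since the bad cones have a genuine opening angle $\epsilon$, there is a whole neighbourhood of angles for which $(\vec{f}_1,\vec{f}_2)$ still lies in $A_\epsilon$. Hence by a Weyl-equidistribution / Proposition~\ref{5.14}-type argument applied to the triple $\{\pi,\theta_1,\theta_2\}$ (or, in the rationally-dependent sub-case, exactly the LCM device of Theorem~\ref{5.57}), one can find arbitrarily long windows of consecutive rotation steps during which $\vec{f}_1,\vec{f}_2$ stay inside $A_\epsilon$; within such a window, define new angles $\widetilde\theta(\lambda_i)$ as suitable multiples of $\theta(\lambda_i)$ and apply Proposition~\ref{5.14} to drag the remaining finitely many vectors $\vec{f}_i$, $i\in I_k:=\{i\ge 3:\lambda_i\in N_k\}$, into the same quadrant. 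This yields a finite bound $R(N_k)$ on the number of rotations needed at stage $k$.

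With the rotation mechanism in hand, the $l^2$ estimate is literally the computation from Sections~\ref{sec1} and \ref{sec3}. For an arbitrary $\lambda\in\{\lambda_i\}_{i=1}^\infty$, say first appearing in $N_k$, the candidate eigenvector $\underline u$ obtained by alternately rotating (using the unperturbed monodromy matrices) and shrinking (applying $M_j(\lambda)$ with an appropriately signed $\widetilde q_j$, using \eqref{1.9}) satisfies
\begin{multline}
\|\underline{u}(\lambda)\|^2\leq A_k(\lambda)+D_k(\lambda)\sum\limits_{m=k}^\infty\left[ \prod\limits_{j=k}^m (1-C(\lambda)|\widetilde{q}_j|)\right](R(N_{m+1})+1)\\+F_k(\lambda)\sum\limits_{t=k}^\infty\left[\prod\limits_{s=k}^{t-1}(1-C(\lambda)|\widetilde{q}_s|)\right].
\end{multline}
As in Section~\ref{sec1}, one arranges (by repeating each $N_k$ sufficiently often) that $R(N_k)=O(\sqrt k)$, sets $|\widetilde q_m|=L_m/m$ with $L_m\to\infty$ slowly and $L_m/m$ square-summable, and then passing to $\exp\circ\log$ gives terms bounded by $(1+\sqrt m)m^{-C(\lambda)L_s}$, which is summable once $C(\lambda)L_s>2$; hence the right-hand side converges and $\underline u\in l^2(\mathbb{N};\mathbb{R})$. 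The potential-size bound $|q_n|\leq K(n)/n$ is obtained verbatim as at the end of the proof of Theorem~\ref{5.45}: replace $K(n)$ by a slower, monotone version, set $L_n=(K(\widetilde n))^{1/3}$ with $\widetilde n\le 2T\sum_{k\le n}R(N_k)$, and check \eqref{4.15} using $2TR(N_m)<(K(m))^{2/3}$ for $m\gg 1$. Finally, the Coulomb-type decay makes $Q$ a compact perturbation, so $\sigma_{ess}(J_T+Q)=\sigma_{ess}(J_T)$ and each $\lambda_i$ is genuinely embedded.

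The main obstacle is the middle step: since $\theta_1,\theta_2$ may both be irrational multiples of $\pi$, there is no exact periodicity to lock $\vec{f}_1,\vec{f}_2$ in place while manoeuvring the other vectors, so one must exploit the positive opening angle $\epsilon$ of the bad cones to find sufficiently long windows of near-recurrence and combine this carefully with Proposition~\ref{5.14}. Everything else is a routine adaptation of the arguments already established in Sections~\ref{sec2}--\ref{sec3}.
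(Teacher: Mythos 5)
Your proposal has the same skeleton as the paper's proof, which disposes of Theorem~\ref{5.73} in one line by saying the argument of Theorem~\ref{5.57} goes through with Lemma~\ref{5.43} replaced by Lemma~\ref{5.71}; your decomposition into nested sets $N_k$ with $\{\lambda_1,\lambda_2\}\subseteq N_1$, the norm estimate, the $R(N_k)=O(\sqrt{k})$ control and the $|q_n|\leq K(n)/n$ verification are all taken verbatim from Sections~\ref{sec1}--\ref{sec3}, as intended. Where you genuinely depart from the paper is the ``freezing'' step: the proof of Theorem~\ref{5.57} keeps $\vec f_1,\vec f_2$ locked in their quadrants while Proposition~\ref{5.14} manoeuvres the remaining vectors by exploiting the exact period $t=\mathrm{LCM}(q_1,q_2)$, which exists only when both quasi-momenta are rational multiples of $\pi$, the other cases being referred back to Theorems~\ref{5.45} and \ref{5.60}; that reduction does not literally cover the sub-case where $\theta(\lambda_1),\theta(\lambda_2)$ are individually irrational multiples of $\pi$ yet $\{\pi,\theta(\lambda_1),\theta(\lambda_2)\}$ is rationally dependent, and your near-recurrence argument is precisely what is needed to patch this. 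One caveat on its formulation: there are in general no long blocks of \emph{consecutive} steps during which $\vec f_1,\vec f_2$ both remain in $A_\epsilon$ (a rotation by $\theta_1$ close to $\pi/4$, say, leaves a quadrant after two or three steps); what you need --- and what your next sentence about passing to multiples $\widetilde\theta(\lambda_i)=t\theta(\lambda_i)$ shows you actually mean --- is an arithmetic progression of steps with common difference $t$, where $t$ is produced by simultaneous Diophantine approximation so that $R(t\theta(\lambda_1))$ and $R(t\theta(\lambda_2))$ are within $\delta$ of $\pm I$ (which preserves membership in $A_\epsilon$ since opposite quadrants are identified), with $\delta$ chosen after the bound $R(N_k,\gamma)$ of Proposition~\ref{5.14} so that the accumulated drift cannot push $\vec f_1,\vec f_2$ out of their quadrants within stage $k$. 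With that reading your argument is sound, and somewhat more complete than the paper's own.
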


\begin{proof}
The argument is the same as in Theorem~\ref{5.57}, except now Lemma~\ref{5.43} is replaced by Lemma~\ref{5.71}.~\qedhere\end{proof}

\section{Infinitely many eigenvalues with a finite set of exceptional values.}\label{sec5}

Here, alongside infinitely many eigenvalues whose quasi-momenta are rationally independent with each other and $\pi$, we consider an arbitrary (but finite) selection of eigenvalues, where $\lambda_1, \lambda_2, \dots, \lambda_n$ such that $$\theta(\lambda_1)=\frac{p_1\pi}{q_1},\theta(\lambda_2)=\frac{p_2\pi}{q_2},\dots,\theta(\lambda_n)=\frac{p_n\pi}{q_n},$$ $p_i,q_i\in\mathbb{N},\gcd(p_i,q_i)=1,\gcd(q_i,q_j)=1$ for all $i\neq j$ and $\theta(\lambda_i)\neq\frac{\pi}{2}$ for all $i$.

The following elementary result will be needed:

\begin{proposition}\label{5.61} Let $\vec{f}$ be an arbitrary vector in the plane and $\theta=\frac{p\pi}{q}$ where $p,q\in\mathbb{N},\gcd(p,q)=1$. Then, if $2|p$, i.e. $2$ divides $p$, the orbit of $\vec{f}$ has $q$ distinct elements under the rotation matrix $R(\theta)$, all separated evenly by an angle of $\frac{2\pi}{q}$. If $2\centernot | p$ then the orbit of $\vec{f}$ has $2q$ distinct elements under the rotation matrix $R(\theta)$, all uniformly distributed with separating angle $\frac{\pi}{q}$.
\end{proposition}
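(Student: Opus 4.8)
The plan is to analyze the orbit $\{R(k\theta)\vec{f}\}_{k\in\mathbb{Z}}$ purely in terms of the rotation angle modulo $2\pi$, so that the geometry reduces to a counting argument about the subgroup of $\mathbb{R}/2\pi\mathbb{Z}$ generated by $\theta=\frac{p\pi}{q}$. First I would fix any $\vec{f}\neq\underline{0}$ and write $\vec{f}$ in polar form with argument $\varphi$; then $R(k\theta)\vec{f}$ has argument $\varphi + k\theta \bmod 2\pi$, and two iterates $R(k\theta)\vec{f}$ and $R(\ell\theta)\vec{f}$ coincide if and only if $(k-\ell)\theta \equiv 0 \bmod 2\pi$, i.e. $(k-\ell)\frac{p\pi}{q}\in 2\pi\mathbb{Z}$, i.e. $q \mid (k-\ell)\frac{p}{2}$ — which I would make precise by splitting into the two parity cases for $p$. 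The size of the orbit is then exactly the order of $\theta$ in $\mathbb{R}/2\pi\mathbb{Z}$, and the orbit is a coset of the corresponding cyclic subgroup, hence its points are equally spaced.

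For the case $2\mid p$, write $p=2p'$ with $\gcd(p',q)=1$ (this gcd condition is inherited from $\gcd(p,q)=1$). Then $k\theta = \frac{2kp'\pi}{q} \equiv 0 \bmod 2\pi \iff q \mid kp' \iff q\mid k$, using $\gcd(p',q)=1$. Hence the orbit has exactly $q$ distinct elements, namely the arguments $\varphi + \frac{2p'\pi}{q}k$ for $k=0,\dots,q-1$; since $\gcd(p',q)=1$ these are a permutation of $\varphi + \frac{2\pi}{q}j$, $j=0,\dots,q-1$, so the orbit is evenly spaced with gap $\frac{2\pi}{q}$. For the case $2\nmid p$, I claim the smallest positive $k$ with $k\frac{p\pi}{q}\in 2\pi\mathbb{Z}$ is $k=2q$: indeed $k\frac{p\pi}{q}\in 2\pi\mathbb{Z} \iff 2q \mid kp$, and since $p$ is odd, $\gcd(p,2q)=\gcd(p,q)\cdot\gcd(p,2)=1$, so $2q\mid k$. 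Thus the orbit has $2q$ distinct elements, with consecutive arguments differing (after reindexing by $j\mapsto pj \bmod 2q$, which is a bijection of $\mathbb{Z}/2q\mathbb{Z}$ since $\gcd(p,2q)=1$) by $\frac{\pi}{q}$, giving uniform distribution with separating angle $\frac{\pi}{q}$.

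The only genuinely delicate point is the bookkeeping with the factor of $2$: one must be careful that $\frac{p\pi}{q}$ lives in $\mathbb{R}/2\pi\mathbb{Z}$ rather than $\mathbb{R}/\pi\mathbb{Z}$, so that "$q$ divides $k p$ times one-half" has to be handled by the parity split rather than glossed over — getting this wrong would conflate the $q$-point and $2q$-point orbits. Everything else is an elementary fact about orders of elements in the cyclic group $\mathbb{R}/2\pi\mathbb{Z}$ together with the observation that multiplication by a unit mod $q$ (resp. mod $2q$) permutes the residues, which is what turns "the orbit consists of $q$ (resp. $2q$) distinct points" into "the orbit is evenly spaced." I would present the argument compactly, doing the $2\mid p$ case in full and remarking that the $2\nmid p$ case is identical with $q$ replaced by $2q$ throughout.
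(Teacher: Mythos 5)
Your proof is correct. Note that the paper states Proposition~\ref{5.61} as an ``elementary result'' and gives no proof at all, so there is nothing to compare against; your argument --- reducing the orbit to the cyclic subgroup of $\mathbb{R}/2\pi\mathbb{Z}$ generated by $\theta$, computing its order as $q$ or $2q$ via the parity split and $\gcd(p,2q)=1$, and then using that multiplication by a unit permutes residues to get the even spacing --- is exactly the kind of verification the authors omitted, and it is complete. The only (harmless) discrepancy is that the proposition says ``arbitrary vector'' while your argument implicitly requires $\vec{f}\neq\underline{0}$, which is consistent with how the proposition is used in Lemmas~\ref{5.62} and~\ref{5.63}.
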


 We can now  prove the following lemma which will be used in the main theorem of this section.

\begin{lemma}\label{5.62}
Let $\vec{f}$ be an arbitrary non-zero vector in the plane. Then for $\theta=\frac{p\pi}{q}\not\in\{\frac{\pi}{2},\frac{2\pi}{3}\}$ where $p,q\in\mathbb{N},\gcd(p,q)=1$, every quadrant will feature at least once in the orbit of $\vec{f}$ under the action of $R(\theta)$. \end{lemma}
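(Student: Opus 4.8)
The plan is to reduce the statement to the equidistribution of the orbit provided by Proposition~\ref{5.61}, combined with an elementary gap/pigeonhole argument. After the standardization of Section~\ref{sec1}, the set $S_\epsilon$ consists of four thin cones about two orthogonal directions and their negatives, so the four quadrants are, up to the arbitrarily small opening angle $\epsilon$, four fixed open arcs of angular width $\frac{\pi}{2}$; it therefore suffices to show that the orbit of $\vec f$ meets each of these four arcs.

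First I would record that, $\theta$ being a quasi-momentum, $0<\theta<\pi$, so $0<p<q$ with $\gcd(p,q)=1$. By Proposition~\ref{5.61} the arguments of the vectors $R(k\theta)\vec f$, $k\in\mathbb N$, are equally spaced around the circle with common gap $\delta$, where $\delta=\frac{2\pi}{q}$ when $2\mid p$ and $\delta=\frac{\pi}{q}$ when $2\nmid p$. The key reduction is: if $\delta<\frac{\pi}{2}$, then every open quadrant contains a point of the orbit. This is immediate from a gap argument --- the orbit points cut the circle into arcs of length $\delta$, so an open arc of length $\frac{\pi}{2}$ that missed every orbit point would be contained in one such gap (or pinched between two consecutive orbit points), forcing $\delta\ge\frac{\pi}{2}$, a contradiction. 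Since, moreover, any orbit point lying in an open quadrant lies in its interior, it avoids $S_\epsilon$ once $\epsilon$ is small enough.

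It then remains to verify $\delta<\frac{\pi}{2}$ for every admissible $\theta$, and this is exactly where the two forbidden values are used. If $2\nmid p$ then $\delta=\frac{\pi}{q}$: here $q=1$ is impossible since $p<q$, and $q=2$ forces $p=1$, i.e. $\theta=\frac{\pi}{2}$, which is excluded; hence $q\ge 3$ and $\delta=\frac{\pi}{q}\le\frac{\pi}{3}<\frac{\pi}{2}$. If $2\mid p$ then $\gcd(p,q)=1$ forces $q$ to be odd, and since $p$ is a positive even integer, $p\ge 2$, so $q\ge 3$; but $q=3$ forces $p=2$, i.e. $\theta=\frac{2\pi}{3}$, which is excluded; hence $q\ge 5$ and $\delta=\frac{2\pi}{q}\le\frac{2\pi}{5}<\frac{\pi}{2}$. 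In every case $\delta<\frac{\pi}{2}$, and the reduction above completes the proof.

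The hard part --- indeed the only genuine subtlety --- is this final case check: one must remember that when $2\mid p$ Proposition~\ref{5.61} gives only the coarser spacing $\frac{2\pi}{q}$, so the naive bound is not quite enough, and that $\theta=\frac{\pi}{2}$ and $\theta=\frac{2\pi}{3}$ are precisely the small-denominator cases in which this spacing fails to be less than a quarter turn --- and for which the orbit genuinely misses a quadrant (three equally spaced points when $\theta=\frac{2\pi}{3}$, or four points all lying on the coordinate axes when $\theta=\frac{\pi}{2}$ and $\vec f$ is axial). Everything else is bookkeeping.
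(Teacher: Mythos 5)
Your proof is correct, and it rests on the same foundation as the paper's --- Proposition~\ref{5.61}, giving an orbit of equally spaced points with gap $\delta=\frac{2\pi}{q}$ or $\frac{\pi}{q}$ --- but the endgame is organised differently and, frankly, more cleanly. The paper argues by enumeration: it asks how many orbit points can land on the cone axes in each small-$q$ case ($q=3$ for odd $p$; the pentagon $q=5$ and heptagon $q=7$ for even $p$; then ``enough points remain'' for larger $q$) and concludes that the leftover points must cover all four quadrants. You instead isolate a single pigeonhole lemma --- an open arc of length $\frac{\pi}{2}$ disjoint from an equally spaced orbit must sit inside one gap, forcing $\delta\ge\frac{\pi}{2}$ --- and then reduce the whole lemma to the arithmetic check that $\delta<\frac{\pi}{2}$ for every admissible $\theta$, which is exactly where the exclusions $\theta\ne\frac{\pi}{2}$ (forcing $q\ge3$ when $p$ is odd) and $\theta\ne\frac{2\pi}{3}$ (forcing $q\ge5$ when $p$ is even, $q$ odd) enter. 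This buys you uniformity: you never need to count axis hits or treat $q=3,5,7$ separately, and your argument makes rigorous the paper's slightly loose claim that the remaining ``evenly distributed'' points necessarily visit every quadrant. You are also more careful than the paper in recording that $0<\theta<\pi$ (so $p<q$), without which the statement would fail for, say, $\theta=\frac{3\pi}{2}$. The one thing worth adding if you wrote this up formally is a sentence confirming that the ``quadrants'' here are the four open sectors of width $\frac{\pi}{2}$ between the (orthogonal, by Lemma~\ref{5.26}) cone axes, which you do implicitly by invoking the standardization and the shrinking of $\epsilon$.
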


\begin{proof}
First consider the case when $2\centernot | p$. Then, by Proposition~\ref{5.61}, the orbit of $\vec{f}$ has $2q$ distinct elements, evenly distributed about the circle of radius $|\vec{f}|$. Thus, if $q\geq 4$ then even if all the axes are hit, there will still be at least four other points in the orbit evenly distributed within the quadrants. If $q=3$ then there are two options: either no axis is hit, in which case there are six points of the orbit evenly distributed about the circle of radius $|\vec{f}|$, four of which must visit every quadrant; or at least one axis is hit by the orbit. However, if one axis is hit then the opposite axis is hit $3$ rotations later (or earlier), thus meaning at least two two axes are hit. However, no other axis can be hit because this would be at a distance of $\frac{\pi}{2}$ from either axis, and this is not a multiple of $\frac{\pi}{3}$. Thus, there are four points left in the orbit, meaning every one of the four quadrants is visited.

Secondly, consider the case when  $2|p$. Then, by Proposition~\ref{5.61}, the orbit of $\vec{f}$ has $q$ distinct elements, evenly distributed about the circle of radius $|\vec{f}|$. Thus, for $q\geq 8$ even if all four axes are hit, there are still four orbits left with which to visit every quadrant. Now since $q\in\{2,3,4,6\}$ are not valid options, we direct our attentions to $q=5$. The problem becomes tantamount to looking at a regular pentagon and seeing that at most only one vertex can lie on the axes, thus leaving four to visit every quadrant. Finally, for $q=7$, it is sufficient to look at a regular heptagon and observe that at most one vertex can reside on the axes.~\qedhere
\end{proof}

\begin{remark}
The invalidness of the result for the angle $\theta=\frac{\pi}{2}$ follows from the fact that if $\vec{f}_i$ begins on the central-axis of a bad cone then no quadrant is visited over its orbit. Similarly, for $\theta=\frac{2\pi}{3}$ the result fails because at most three quadrants are visited (since its orbit only has three entries) and at worst two (when the one of the orbit includes the axis).
\end{remark}

\begin{lemma}\label{5.63}
Let $\vec{f}_1,\dots,\vec{f}_n$ be any non-zero collection of vectors in the plane and $\theta_1=\frac{p_1\pi}{q_1},\theta_2=\frac{p_2\pi}{q_2},\dots,\theta_n=\frac{p_n\pi}{q_n}$ where $p_i,q_i\in\mathbb{N},\gcd(p_i,q_i)=1$, $\gcd(q_i,q_j)=1$ for all $i\neq j$ and $\theta_i\neq\frac{\pi}{2}$ for any $i$. Then there exists $k\in\mathbb{N}$ such that the collection $$\left(R(k\theta_1)\vec{f}_1,\dots,R(k\theta_n)\vec{f}_n\right)\in A_\epsilon.$$
\end{lemma}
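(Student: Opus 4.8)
The plan is to reduce the problem, via the Chinese Remainder Theorem, to choosing a residue of $k$ modulo $q_i$ for each $i$. The crucial observation is that although the full orbit of $\vec{f}_i$ under $R(\theta_i)$ has period $q_i$ or $2q_i$ by Proposition~\ref{5.61}, the only thing we actually need to control -- the \emph{parity} of the quadrant occupied by $R(k\theta_i)\vec{f}_i$ -- depends on $k$ only modulo $q_i$, and the $q_i$ are pairwise coprime by hypothesis.

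First I would observe that it suffices to find $k$ such that every $R(k\theta_i)\vec{f}_i$ lies in an odd-numbered quadrant ($1$ or $3$ in the labelling of Figure~\ref{5.72}): then all the quadrants agree modulo $2$, and since open quadrants avoid the coordinate axes, after shrinking $\epsilon$ as discussed in the Remark on the central axes of the bad cones the vectors also avoid $S_\epsilon$, so the collection lies in $A_\epsilon$. Writing $\xi_i:=\arg\vec{f}_i$, the vector $R(k\theta_i)\vec{f}_i$ lies in an odd quadrant precisely when $\sin\!\big(2\xi_i+2k\theta_i\big)>0$, i.e. when $2\xi_i+2k\theta_i\in(0,\pi)\bmod 2\pi$. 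Since $2k\theta_i=\tfrac{2\pi kp_i}{q_i}$, its value modulo $2\pi$ equals $\tfrac{2\pi}{q_i}\,(kp_i\bmod q_i)$ and hence depends on $k$ only through $k\bmod q_i$; equivalently, $R(q_i\theta_i)=R(p_i\pi)=(-1)^{p_i}I$ maps the set of odd quadrants to itself. So ``$R(k\theta_i)\vec{f}_i$ is in an odd quadrant'' is a condition on $k\bmod q_i$.

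Next I would show this condition is satisfiable for each $i$. As $k$ runs through $\mathbb{Z}/q_i\mathbb{Z}$ the numbers $2\xi_i+2k\theta_i\bmod 2\pi$ run through $q_i$ points equally spaced by $\tfrac{2\pi}{q_i}$, because $\gcd(p_i,q_i)=1$. Since $\theta_i\in(0,\pi)$ and $\theta_i\neq\tfrac{\pi}{2}$ we have $q_i\geq 3$ (if $q_i=1$ then $\theta_i=p_i\pi\notin(0,\pi)$; if $q_i=2$ then $\theta_i=\tfrac{\pi}{2}$), so the spacing $\tfrac{2\pi}{q_i}\leq\tfrac{2\pi}{3}<\pi$ and at least one of these $q_i$ points lies strictly inside the arc $(0,\pi)$. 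Let $B_i\subseteq\mathbb{Z}/q_i\mathbb{Z}$ be the resulting non-empty set of admissible residues. Finally, since $\gcd(q_i,q_j)=1$ for $i\neq j$, the Chinese Remainder Theorem produces $k\in\mathbb{N}$ with $k\equiv r_i\pmod{q_i}$ for a chosen $r_i\in B_i$, simultaneously for all $i$; this $k$ sends every $\vec{f}_i$ into an odd quadrant, as required.

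The main obstacle is precisely the step that explains the coprimality hypothesis. The naive strategy -- for each $i$ pick a step at which $R(\cdot\,\theta_i)\vec{f}_i$ sits in a fixed good quadrant, then combine by CRT -- fails, because the relevant moduli would be the orbit periods $q_i$ or $2q_i$, and two periods of the second kind always share the factor $2$. The fix above is to track only the quadrant \emph{parity}, which genuinely is governed by $k\bmod q_i$, where coprimality is available. A secondary point is the avoidance of the (at most four per $\lambda_i$) central axes of $S_\epsilon^{(i)}$ rather than merely the coordinate axes: this is handled as in the Remark by taking $\epsilon$ small and, should an admissible residue place some $\vec{f}_i$ on a bad axis, replacing it by another element of $B_i$ (or switching the target to ``all even quadrants''); the only situations in which $B_i$ could be a singleton occur for small $q_i$ -- in particular the at-most-one index with $\theta_i=\tfrac{2\pi}{3}$, whose three-point orbit one inspects directly -- and there a usable residue always remains.
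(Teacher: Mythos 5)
Your proof is correct, and it takes a genuinely different, and in fact more streamlined, route than the paper. The paper's proof first establishes Lemma~\ref{5.62} (for $\theta\neq\frac{2\pi}{3}$ the orbit of a vector visits every quadrant), picks for each $i$ a step $a_i$ landing $\vec{f}_i$ in a prescribed quadrant, and then solves the congruences $x\equiv a_i \bmod \alpha_i q_i$, where $\alpha_i q_i\in\{q_i,2q_i\}$ is the orbit period; since several of these moduli may share the factor $2$, it needs an extra parity-repair step (adding $q_j$ rotations, i.e.\ an antipodal flip, to equalise the residues mod $2$) together with a separate argument for the exceptional angle $\frac{2\pi}{3}$. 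You instead observe that membership in $A_\epsilon$ only constrains the quadrant \emph{parity}, that this parity is a function of $k\bmod q_i$ alone because $R(q_i\theta_i)=(-1)^{p_i}I$ preserves it, and that the set of admissible residues mod $q_i$ is non-empty because the $q_i$ doubled angles are equally spaced with gap $\frac{2\pi}{q_i}<\pi$ (using $q_i\geq 3$, which you correctly derive from $\theta_i\in(0,\pi)$ and $\theta_i\neq\frac{\pi}{2}$ --- the same implicit restriction to the quasi-momentum range that Lemma~\ref{5.62} also relies on). The Chinese Remainder Theorem then applies directly to the pairwise coprime $q_i$. This buys you three things: no parity adjustment, no special case for $\frac{2\pi}{3}$ (since $q=3$ still gives gap $\frac{2\pi}{3}<\pi$), and a transparent explanation of why coprimality of the $q_i$ themselves, rather than of the orbit periods, is the natural hypothesis. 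Two minor remarks: your closing worry about an admissible residue placing some $\vec{f}_i$ on a bad axis is vacuous, since the strict inequality defining $B_i$ already forces every admissible residue to put the vector strictly inside a quadrant; and, exactly as the paper's own proof does, you rely on the remark in Section~\ref{sec2} that $\epsilon$ may then be taken small enough that these finitely many interior points avoid $S_\epsilon$.
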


\begin{proof}
(Case One) For all $\theta_i\neq \frac{2\pi}{3}$, we know that from Lemma~\ref{5.62} there exists some $a_i$ such that $R(a_i\theta_i)\vec{f}_i\not\in S_\epsilon$. What we are now aiming to do is rotate all vectors $\vec{f}_i$ into the same quadrant, i.e. we wish to find some $x$ such that
\begin{align*}
x&\equiv a_1 \mod \alpha_1q_1,\\
x&\equiv a_2 \mod \alpha_2 q_2,\\
\vdots&\\
x&\equiv a_n\mod \alpha_n q_n,\end{align*}
where $\alpha_i=1$ if $2|p_i$ and $\alpha_i=2$ if $2\centernot |p_i$. Now if at most only one $p_i$ is divisible by $2$ then we can apply the Chinese Remainder Theorem to obtain the result. However, if $2 |p_{j}$ for all $j\in A\subset \{1,\dots,n\}$ where $|A|\geq 2$, then the Chinese Remainder Theorem is no longer immediately applicable since the moduli are not co-prime. Instead, we break the system down into co-prime factors, that is $x\equiv a_{j}\mod 2q_{j}$ implies $$x\equiv a_{j}\mod 2~~~~~{\rm and}~~~~~~x\equiv a_{j}\mod q_{j},$$ for all $j\in A$.  This new system can be solved using the Chinese Remainder Theorem, but only providing there are no inconsistencies with regards $x\equiv a_{j} \mod 2$; in particular, for all $j\in A$, $a_{j}$ must have the same parity. If there is an inconsistency, then we observe that due to the co-prime conditions on $q_i$ there is at most one $j_0\in A$ where $q_{j_0}$ is even, and we choose to make every other $a_{j}$ of the same parity. This is achieved by adding $q_{j}$ (which must be odd) to all those $a_{j}$ whose parity is different to $a_{j_0}$, and thus all the parities are now the same. This follows from the fact that $$q_{j}\times \frac{p_{j}\pi}{q_{j}}=p_{j}\pi,$$ which, since $2\centernot|p_j$, is an odd multiple of $\pi$ and so $$R(\theta_{j} (a_{j}+q_{j}))\vec{f}_{j}=R(\theta_{j} a_{j})\vec{f}_{j}+\pi.$$ Thus by eliminating any inconsistencies in the parity, we have only moved the vector $\vec{f}_j$ from its current quadrant into the diametrically opposite one. The Chinese Remainder Theorem can now be applied.

(Case Two) If $\theta_t=\frac{2\pi}{3}$ for some $t\in\{1,\dots,n\}$ then Lemma~\ref{5.62} is no longer valid for this particular quasi-momentum. However, the result still follows, since the two quadrants the vector $\vec{f}_t$ does visit under the action of the rotation matrix $R(\theta(\lambda_t))$ are indeed enough. This is because by Lemma~\ref{5.62} the vectors, $\vec{f}_i, i\in\{1,\dots,n\}\setminus\{t\}$, can be moved to one of these two quadrants under the corresponding action of $R(3\theta_i)$ (the new angle $3\theta_i$ ensuring $\vec{f}_t$ remains fixed in the correct quadrant whilst the other vectors are being moved about).~\qedhere
\end{proof}

We now state the main theorem of this section.

\begin{theorem}\label{5.64}
Let $J_T$ be an arbitrary period-$T$ Jacobi operator and $\{\lambda_i\}_{i=n+1}^\infty$ a sequence of complex numbers belonging to $\sigma_{ell}(J_T)$ where any collection $$\{\pi,\theta(\lambda_{n+1}),\theta(\lambda_{n+2}),\dots,\theta(\lambda_{n+k})\},~~k\geq 1,$$ is rationally independent, and $\{\lambda_1,\lambda_2,\dots,\lambda_n\}\subseteq\sigma_{ell}(J_T)$ where $$\theta(\lambda_1)=\frac{p_1\pi}{q_1},\theta(\lambda_2)=\frac{p_2\pi}{q_2},\dots,\theta(\lambda_n)=\frac{p_n\pi}{q_n},$$ $p_i,q_i\in\mathbb{N},\gcd(p_i,q_i)=1,\gcd(q_i,q_j)= 1$ for all $i\neq j$ and $\theta(\lambda_i)\neq\frac{\pi}{2}$ for all $i$. Then for any positive sequence $K(n)$ with $K(n)\rightarrow\infty$ arbitrarily slowly as $n\rightarrow\infty$, there exists a potential $|q_n|\leq \frac{K(n)}{n}$ for all $n$, and non-zero vectors $(u_{n,i})_{n\geq1}\in l^2(\mathbb{N};\mathbb{R})$ such that $$(J_T+Q)(u_{n,i})_{n\geq1}=\lambda_i (u_{n,i})_{n\geq1}$$ for all $i$ and where $Q$ is an infinite diagonal matrix with entries $(q_n)$.
\end{theorem}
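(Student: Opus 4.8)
The plan is to combine the two manoeuvring mechanisms already developed: the Chinese-Remainder-based simultaneous rotation of a finite block of rationally dependent vectors (Lemma~\ref{5.63}) and the equidistribution-based simultaneous rotation of a finite block of rationally independent vectors (Proposition~\ref{5.14}), inside the scheme used in Theorems~\ref{5.45}, \ref{5.60}, \ref{5.57} and \ref{5.73}. Concretely, I would first standardize all the bad cones to a single $S_\epsilon$ as in Section~\ref{sec1}, and write the full family of eigenvalues as an increasing union $N_1\subset N_2\subset\cdots$ of finite sets with $\bigcup_k N_k=\{\lambda_i\}_{i\geq 1}$, insisting that the whole exceptional block $\{\lambda_1,\dots,\lambda_n\}$ already lies in $N_1$. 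Then at each stage $k$ one only ever manoeuvres finitely many vectors at once, so the machinery of Section~\ref{sec2} applies.

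The core step is the per-stage rotation of the relevant vectors into the acceptable region $A_\epsilon$ of Definition~\ref{5.10}. For the block $\vec f_1,\dots,\vec f_n$ attached to $\lambda_1,\dots,\lambda_n$, Lemma~\ref{5.63} yields a single exponent that places all of them in a common (or diametrically opposite) quadrant outside $S_\epsilon$, using $\gcd(q_i,q_j)=1$ to run the Chinese Remainder Theorem and the parity correction in its proof to kill inconsistencies. To then fold in the rationally independent vectors of $N_k$ without disturbing this configuration, I would use the device of Theorems~\ref{5.57} and \ref{5.60}: by Proposition~\ref{5.61} the orbit of $\vec f_i$ under $R(\theta(\lambda_i))$ has period $\alpha_iq_i$, where $\alpha_i=1$ if $2\mid p_i$ and $\alpha_i=2$ otherwise, so $t:=\operatorname{lcm}(\alpha_1q_1,\dots,\alpha_nq_n)$ is a common period of all $n$ exceptional orbits; passing to the blown-up quasi-momenta $t\,\theta(\lambda_j)$ for the remaining $\lambda_j\in N_k$ preserves rational independence with $\pi$, so Proposition~\ref{5.14} rotates all of those vectors into the same quadrant using multiples of $t$ rotations, during which $R(t\,\theta(\lambda_i))=I$ leaves $\vec f_1,\dots,\vec f_n$ fixed. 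This shows $R(N_k)$ is finite for every $k$, and by inserting each $N_k$ into the sequence sufficiently many times it can be kept to $O(\sqrt k)$, exactly as in Theorem~\ref{5.45}.

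Given this, for a fixed $\lambda=\lambda_i$ first appearing in $N_k$, the candidate eigenvector $\underline u$ is built by the rotate-then-shrink procedure of Section~\ref{sec2} and, as in \eqref{4.12}, its norm obeys an estimate
\begin{multline*}
\|\underline u(\lambda)\|^2\leq A_k(\lambda)+D_k(\lambda)\sum_{m=k}^\infty\left[\prod_{j=k}^m\bigl(1-C(\lambda)|\widetilde q_j|\bigr)\right]\bigl(R(N_{m+1})+1\bigr)\\
+F_k(\lambda)\sum_{t=k}^\infty\left[\prod_{s=k}^{t-1}\bigl(1-C(\lambda)|\widetilde q_s|\bigr)\right],
\end{multline*}
with $C(\lambda)>0$ the shrinkage constant from \eqref{1.9}. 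Taking $|\widetilde q_m|=L_m/m$ with $L_m\to\infty$ slowly and $L_m/m$ square-summable, then logarithms and exponentials as in Section~\ref{sec1}, turns $\prod(1-C(\lambda)|\widetilde q_j|)$ into a factor $m^{-C(\lambda)L_s}$ which dominates $1+\sqrt m$ once $C(\lambda)L_s>2$; hence both series converge and $\underline u\in l^2(\mathbb N;\mathbb R)$. The bound $|q_n|\leq K(n)/n$ is then obtained verbatim from Theorem~\ref{5.45}: set $L_n=(K(\widetilde n))^{1/3}$ with $\widetilde n$ the position of the $n$-th nonzero entry, use $\widetilde n\leq 2T\sum_{k\leq n}R(N_k)$ together with the monotonicity reductions on $K$, and let the $N_k$ grow slowly enough that $2TR(N_m)<(K(m))^{2/3}$ for $m\gg 1$; the correction $q_n\equiv 0$ on a finite initial interval is harmless as before.

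The main obstacle is the interlocking in the second paragraph: one must bring the rationally independent vectors into the slot already occupied by the exceptional block \emph{without} moving the latter, which works only because $t=\operatorname{lcm}(\alpha_1q_1,\dots,\alpha_nq_n)$ is a genuine common period of all $n$ exceptional orbits (so $R(t\,\theta(\lambda_i))=I$) and because $t\,\theta(\lambda_j)$ stays rationally independent with $\pi$. Establishing that this $t$ is the correct modulus rests on $\gcd(q_i,q_j)=1$ and the parity bookkeeping of Lemma~\ref{5.63}; once that is in place, the remainder is a repetition of the estimates of Sections~\ref{sec2} and \ref{sec1}.
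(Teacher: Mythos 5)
Your proposal is correct and follows essentially the same route as the paper: the paper's proof of Theorem~\ref{5.64} simply runs the argument of Theorem~\ref{5.57} with $\{\lambda_1,\dots,\lambda_n\}\subseteq N_1$ and Lemma~\ref{5.43} replaced by Lemma~\ref{5.63}, which is exactly your scheme (the common-period blow-up of the remaining quasi-momenta and the $l^2$/potential estimates being lifted verbatim from Theorems~\ref{5.57} and \ref{5.45}). Your choice $t=\operatorname{lcm}(\alpha_1q_1,\dots,\alpha_nq_n)$ so that $R(t\,\theta(\lambda_i))=I$ is a harmless refinement of the paper's $\operatorname{LCM}$ of the $q_i$, which only fixes the exceptional vectors up to the antipodal identification already built into $A_\epsilon$.
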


\begin{proof}
The proof is the same as that used in Theorem~\ref{5.57}, except now when we divide up the eigenvalues into sets we have $\{\lambda_1,\lambda_2,\dots,\lambda_n\}\subseteq N_1$, and replace Lemma~\ref{5.43} with Lemma~\ref{5.63} to simultaneously rotate the vectors $\vec{f}_1,\dots,\vec{f}_n$ into shrinkable regions.~\qedhere
\end{proof}

\begin{remark}
Clearly, Theorem~\ref{5.64} covers all cases in Theorem~\ref{5.60}. However, Theorem~\ref{5.64} does not replace Theorems~\ref{5.73} or \ref{5.57} since the pair of eigenvalues with quasi-momenta rationally dependent with $\pi$ considered in these cases, say $\theta(\lambda_1)=\frac{p_1\pi}{q_1}, \theta(\lambda_2)=\frac{p_2\pi}{q_2}$, could be such that $q_1,q_2$ are not co-prime and for Theorem~\ref{5.64} we demand that all $q_i$ be pairwise co-prime.
\end{remark}

\section*{Acknowledgements}

The authors would like to thank K. M. Schmidt for useful comments that helped to improve this paper. E.J. was supported by the Engineering and Physical Sciences Research Council (grant EP/M506540/1). S.N. was supported by the Russian Science Foundation (project no.15-11-30007 ), RFBR  grant 16-11-00443a, NCN grant 2013/09/BST/04319 and LMS. S.N. also expresses his gratitude to the University of Kent at Canterbury for the support and hospitality.

\end{document}